\newtheorem{introtheorem}{Theorem}
\newtheorem{theorem}{Theorem}[section]
\newtheorem{lemma}[theorem]{Lemma}
\newtheorem{conjecture}[theorem]{Conjecture}
\newtheorem*{conjecture*}{Conjecture}
\newtheorem{proposition}[theorem]{Proposition}
\newtheorem{corollary}[theorem]{Corollary}
\newenvironment{manualtheorem}[1]{%
	\manualtheoreminner
}{\endmanualtheoreminner}
\theoremstyle{definition}
\newtheorem{definition}[theorem]{Definition}
\newtheorem{example}[theorem]{Example}
\newtheorem{notation}[theorem]{Notation}
\theoremstyle{remark}
\newtheorem{remark}[theorem]{Remark}
\newcommand{\bbP}{\ensuremath{\mathbb{P}}}
\newcommand{\bbZ}{\ensuremath{\mathbb{Z}}}
\newcommand{\bbQ}{\ensuremath{\mathbb{Q}}}
\newcommand{\bbR}{\ensuremath{\mathbb{R}}}
\newcommand{\bbN}{{\mathbb{N}}}
\newcommand{\cO}{\ensuremath{\mathcal{O}}}
\newcommand{\NS}{\ensuremath {{N^1}}}
\newcommand{\ord}{\ensuremath{\operatorname {ord}}}
\newcommand{\codim}{\ensuremath{\operatorname {codim}}}
\newcommand{\supp}{\ensuremath{\operatorname {supp}}}
\newcommand{\Div}{\ensuremath{\operatorname {Div}}}
\newcommand{\Num}{\ensuremath{\operatorname {Num}}}
\newcommand{\Neg}{\ensuremath{\operatorname {Neg}}}
\newcommand{\Null}{\ensuremath{\operatorname {Null}}}
\newcommand{\Fix}{\ensuremath{\operatorname {Fix}}}
\newcommand{\Y}{\ensuremath{Y_{\bullet}}}
\newcommand{\subn}{_{\!\!\mathscr{N}}}
\numberwithin{figure}{section}
\begin{document}

\title{Newton--Okounkov bodies and Picard numbers on surfaces}

\author{Julio-José Moyano-Fernández} 
\address{Universitat Jaume I, Campus de Riu Sec, Departamento de Matem\'aticas \& Institut Universitari de Matem\`atiques i Aplicacions de Castell\'o, 12071
Caste\-ll\'on de la Plana, Spain} \email{moyano@uji.es}

\author{Matthias Nickel} 
\address{Johann Wolfgang Goethe-Universit\"at, Institut f\"ur Mathematik, Robert-Mayer-Stra\ss e 6-8, D-60325 Frankfurt am Main, Germany} \email{nickel@math.uni-frankfurt.de}

\author{Joaquim Roé}
\address{Universitat Aut\`{o}noma de Barcelona, Departament de Matem\`{a}tiques,
08193 Bellaterra (Barcelona), Spain} \email{jroe@mat.uab.cat}

\subjclass[2010]{Primary: 14C20; Secondary: 14E15, 14C22, 13A18}
\keywords{Valuation; blow-up; Newton--Okounkov body; algebraic surface; Picard number.}
\thanks{During the elaboration and revision of this work, the first author was partially funded by MCIN/AEI/10.13039/501100011033, by “ERDF A way of
making Europe” and by “European Union NextGeneration EU/PRTR”, Grants PID2022-138906NB-C22 and
TED2021-130358B-I00, as well as by Universitat Jaume I, Grants UJI-B2021-02 and GACUJIMA-2023-06. The second author was partially supported by the LOEWE grant ``Uniformized Structures in Algebra and Geometry''. The third author was been partially supported by MCIN/AEI/10.13039/501100011033 grants MTM2016-75980-P and PID2020-116542GB-I00, and by the Direcció General de Recerca Grant 2021 SGR 01468.}

\begin{abstract}
We study the shapes of all Newton--Okounkov bodies $\Delta_{v}(D)$ of a given big divisor $D$ on a surface $S$ with respect to all rank 2 valuations $v$ of $K(S)$. 

We obtain upper bounds for, and in many cases we determine exactly, the possible numbers of vertices of the bodies $\Delta_{v}(D)$. The upper bounds are expressed in terms of Picard numbers and they are \emph{birationally invariant}, as they do not depend on the model $\tilde{S}$ where the valuation $v$ becomes a flag valuation. 

We also conjecture that the set of all Newton--Okounkov bodies of a single ample divisor $D$ determines the Picard number of $S$, and prove that this is the case for Picard number 1, by an explicit characterization of surfaces of Picard number 1 in terms of Newton--Okounkov bodies.
\end{abstract}

\maketitle

\section{Introduction}

Newton--Okounkov bodies were introduced by A.~Okounkov \cite{Ok96} as a tool in representation theory; later, K.~Kaveh and A.~Khovanskii \cite{KaKho10,KaKho12} on the one hand, and R.~Lazarsfeld and M.~Musta{\c t}{\u a} \cite{LM09} on the other, developed a general theory with applications to both convex and algebraic geometry. Very quickly they gained a central role in the asymptotic theory of linear series on projective varieties, and they have been used to address questions of arithmetic geometry, combinatorics, Diophantine approximation, mirror symmetry, and representation theory (see S.~Boucksom's excellent exposition \cite{Bou14} for the general theory; Blum and Jonsson \cite{BJ20}, Boucksom and Chen \cite{BC11}, Fang, Fourier and Littelmann \cite{FFL17}, Harada and Kaveh \cite{HK15}, or Rietsch and Williams \cite{RW19}, are a sample of recent applications). 

To every big divisor $D$ on a normal projective variety $X$ of dimension $d$, and to every valuation $v$ of the field $K(X)$ of rank $d$, one associates a convex body in $\bbR^d$, called the Newton--Okounkov body $\Delta_{v}(D)$ of $D$ with respect to $v$; it contains all normalized values of divisors in the complete linear systems $|kD|$, $k\gg0$.
Its volume agrees (up to a factor of $d!$) with the volume of $D$ for every $v$, but its shape depends on the valuation $v$. 
A lot of effort has been put into trying to understand this dependence, and more generally the information encoded in the shapes of Newton--Okounkov bodies (see K\"uronya, Lozovanu and Maclean \cite{KLM12}, Ciliberto et al. \cite{CFKLRS}, Dumnicki et al. \cite{DHKRS}, Galindo et al. \cite{GMMFN}, and Ro\'e \cite{Roe16}).
By works of Lazarsfeld-Musta\c{t}\u{a} \cite{LM09} and S.~Y.~Jow \cite{Jow}, we know that the collection of all Newton--Okounkov bodies of $D$ is a complete numerical invariant of $D$, and by work of A.~Küronya, V.~Lozovanu and C.~Maclean \cite{KLM12}, we know that on a surface $S$ all Newton--Okounkov bodies are polygons.

Every valuation $v$ of maximal rank of the field $K(S)$ can be built from a smooth flag of subvarieties on some (smooth) birational model $\tilde{S}\rightarrow S$ of $S$, and it is often simpler to focus on flags living on $S$ itself: the proof of polygonality of 2-dimensional Newton--Okounkov bodies, for instance, relies on the flag construction (and Zariski decompositions).
It follows from the proof that the number of vertices of a Newton--Okounkov polygon with respect to a smooth flag on $S$ is at most $2\rho(S)+2$, where $\rho(S)$ is the Picard number of $S$.
However, this bound is often not sharp; in \cite{RS}, the third author and T.~Szemberg determined the number of vertices a Newton--Okounkov polygon may have, where $D$ is required to be ample, $S$ is smooth, and the valuation is induced by a smooth flag on $S$. 
\medskip

The main thrust of the present work is to showcase the benefits of considering valuations determined by flags on higher models $\tilde{S}$ and not just on $S$.
Although this does often lead to slightly more involved arguments, one in fact gains clarity as statements become simpler; this is the case for instance in our first main result. 
It could be expected that, since higher smooth models $\tilde{S}\rightarrow S$ have larger Picard number, the number of vertices of Newton--Okounkov polygons with respect to flags lying on $\tilde{S}$ could grow; somewhat surprisingly, this is not the case:

\begin{introtheorem}\label{upperbound-intro}
Let $S$ be a smooth projective algebraic surface, and let $D$ be a big Cartier divisor on $S$. For every rank 2 valuation $v$ of $K(X)$, the Newton--Okounkov body $\Delta_{v}(D)$ has at most $2\rho(S)+2$ vertices.
\end{introtheorem}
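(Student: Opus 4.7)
The plan is to realize the rank $2$ valuation $v$ by a smooth flag $(C,p)$ on some birational model $\pi\colon \tilde S\to S$, so that $\Delta_v(D) = \Delta_{(C,p)}(\pi^{*}D)$, and then bound the vertices of this Newton--Okounkov polygon by analyzing its boundary via Zariski decompositions. By \cite{LM09} and \cite{KLM12}, with $\nu=\ord_{C}\|\pi^{*}D\|$, $\mu=\sup\{t:\pi^{*}D-tC\text{ big}\}$, and $\pi^{*}D-tC=P_t+N_t$ the Zariski decomposition for $t\in[\nu,\mu]$, one has
$$\Delta_{v}(D) = \{(t,y) : \nu\le t\le\mu,\ \alpha(t) \le y \le \alpha(t)+P_t\cdot C\},$$
where $\alpha(t)=\ord_{p}(N_t|_{C})$. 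Both boundary functions are piecewise linear, and the vertices of $\Delta_v(D)$ correspond to endpoint contributions together with breakpoints in the interior $(\nu,\mu)$; each interior breakpoint marks a transition of the support of $N_t$, i.e.\ the segment $L(t)=\pi^{*}D-tC$ crossing a wall between Zariski chambers of $\operatorname{Big}(\tilde S)$.

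The task is then to bound the number of such chamber crossings in terms of $\rho(S)$ rather than $\rho(\tilde S)$. I would split $N_t=N_{t}^{h}+N_{t}^{v}$ according to whether components are $\pi$-exceptional or not, and exploit the identity $\pi^{*}D\cdot E=0$ for every $\pi$-exceptional curve $E$. Pushing forward: since $\pi_{*}\pi^{*}D=D$ and $\pi_{*}$ annihilates exceptional classes, one obtains
$$D - t\,\pi_{*}C \;=\; \pi_{*}P_t + \pi_{*}N_{t}^{h},$$
with $\pi_{*}P_t$ nef on $S$ by the projection formula. The key step is to compare this with the Zariski decomposition of $D-t\pi_{*}C$ on $S$ and to show that every upstairs transition at which $N_{t}^{h}$ changes forces a genuine Zariski-chamber crossing downstairs. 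Applying the bound of \cite{KLM12} to the resulting family on $S$ would then limit these to at most $2\rho(S)-2$ transitions in the interior, giving $2\rho(S)+2$ vertices of $\Delta_v(D)$ after endpoint contributions.

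The main obstacle is to show that transitions at which only the exceptional part $N_{t}^{v}$ changes---which are invisible after push-forward---produce no additional vertices of $\Delta_v(D)$. I would verify this by a local calculation at a purely exceptional transition $t=t_0$: the entering or leaving $\pi$-exceptional component is determined linearly in $t$ from the existing data by the negative-definite intersection form on the exceptional configuration, and one checks that both $P_t\cdot C$ and $\ord_{p}(N_t|_{C})$ extend \emph{affinely} across $t_0$, so neither $\alpha$ nor $\beta$ breaks there. A separate case treats $\pi_{*}C=0$, i.e.\ when $C$ is itself $\pi$-exceptional and the downstairs segment collapses to the constant class $D$; here I would proceed by induction on the length of a factorization of $\pi$ into blow-ups, at each step reducing to a flag on an intermediate model where the first curve of the flag is non-exceptional.
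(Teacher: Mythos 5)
Your overall framework agrees with the paper: realize $v$ as a smooth flag $(C,p)$ on a model $\pi\colon\tilde S\to S$, describe $\Delta_v(D)$ via Zariski decompositions of $\pi^*D-tC$, and count chamber crossings. But the key technical step---why breakpoints are governed by $\rho(S)$ and not $\rho(\tilde S)$---is where your proposal has a genuine gap.

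The push-forward comparison does not work as stated. It is true that $D-t\pi_*C=\pi_*P_t+\pi_*N_t^h$ with $\pi_*P_t$ nef, but this need \emph{not} be the Zariski decomposition of $D-t\pi_*C$ on $S$: for a component $\tilde C_i$ of $N_t^h$ we have $\pi_*P_t\cdot\pi_*\tilde C_i=P_t\cdot\pi^*(\pi_*\tilde C_i)=P_t\cdot\tilde C_i+P_t\cdot(\text{exceptional part})$, and the second term can be strictly positive. So the decomposition downstairs may strictly absorb $\pi_*\tilde C_i$ into the positive part, and changes in $N_t^h$ upstairs can be invisible as Zariski-chamber crossings downstairs. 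Consequently you cannot invoke the bound of K\"uronya--Lozovanu--Maclean on $S$ directly, and ``every upstairs transition of $N_t^h$ forces a downstairs crossing'' is false in general.

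The ``affine extension'' argument for purely exceptional transitions is also too weak. By Theorem \ref{pro:KLM}, whether $\alpha$ or $\beta$ breaks at $t_0$ depends on the \emph{connected component} of $N_{t_0+\varepsilon}$ containing the newly acquired components and on how that component meets $C$ and $p$; the new component being exceptional does not by itself prevent a break, since an exceptional curve can lie in (or attach to) a chain meeting $C$. What actually rules out breaks from exceptional transitions is the paper's Lemma \ref{lem:pullback}: at any wall where a break occurs, some newly acquired component $C_i$ must intersect $C$ or the old $N_t$ (else the Zariski decomposition wasn't minimal), and then $P_t\cdot C_i=0$ forces $(tC+N_t)\cdot C_i<0$, which is impossible if $C_i\in\Null(\pi^*D)$---and every $\pi$-exceptional curve is in $\Null(\pi^*D)$. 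This minimality argument lives entirely on $\tilde S$ and never uses push-forward; it is the missing idea in your sketch. Finally, the case $\pi_*C=0$ is not a peripheral subcase to dispose of by induction---it is exactly the situation in which the maximum number of vertices is expected (cf.~Example \ref{exa:more-than-mv}); the paper handles it directly through the same counting, using that $C\in\Null(\pi^*D)$ implies $\beta(\nu)=\alpha(\nu)$, so there is only one leftmost vertex.
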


There are geometric restrictions which usually make it impossible to attain the maximal number of vertices $2\rho(S)+2$ with flags lying on $S$, see \cite{RS}; these restrictions loosen when we consider arbitrary valuations of maximal rank, and we in fact hope they disappear:

\begin{introtheorem}\label{existence-intro}
Let $S$ be a smooth projective algebraic surface with Picard number $\rho(S)=1$, and let $D$ be an ample Cartier divisor on $S$. There exist rank 2 valuations $v_3, v_4$ of $K(S)$, such that the Newton--Okounkov body $\Delta_{v_k}(D)$ has exactly $k$ vertices.	
	
Conversely, if $S$ is a smooth projective algebraic surface with Picard number $\rho(S)>1$, then there exist rank 2 valuations $v$ such that $\Delta_v(D)$ has at least five vertices.
\end{introtheorem}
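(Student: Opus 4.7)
The plan is to produce explicit constructions in each direction. For the direction \emph{Picard 1 $\Rightarrow$ existence of $v_3,v_4$}: I would take $v_3$ to be the flag valuation associated with a smooth flag $(C,p)$ on $S$ itself, with $C$ any smooth irreducible curve. Because $\rho(S)=1$, every pseudoeffective class is a positive multiple of $[D]$, so the Zariski decomposition of $D-tC$ is trivial throughout its range $[0,\mu]$; the Lazarsfeld--Musta\c{t}\u{a} formula then yields a right triangle with exactly three vertices. For $v_4$, I would pass to a higher birational model: starting from a smooth curve $C\subset S$ through a point $p$, blow up $p$ and iterate along a chain of infinitely near points adapted to the tangent direction of $C$, until on the resulting surface $\tilde S$ one obtains a flag $(F,q')$ with $F$ an exceptional divisor and $q'$ the intersection with the strict transform $\tilde C$, such that the Zariski decomposition of $\bar\pi^*D-tF$ has exactly one non-trivial transition at an intermediate value $t_0\in(0,\mu)$. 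This single transition contributes one kink to the boundary of $\Delta_{v_4}(D)$; together with the corner vertices, the body has exactly four.

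For the converse (\emph{Picard $>1$ $\Rightarrow$ existence of $v$ with five or more vertices}), the strategy is to exploit two non-proportional classes in $\mathrm{NS}(S)$. I would blow up $S$ at a suitably general point $p$ and consider the flag $(E,q)$ on $\tilde S=\mathrm{Bl}_p S$, where $E$ is the exceptional divisor. The Zariski decomposition of $\pi^*D-tE$ transitions at values $t_i=(D\cdot C_i)/\mathrm{mult}_p\, C_i$ determined by irreducible curves $C_i\subset S$ through $p$. Because $\rho(S)\geq 2$, there exist two such curves $C_1,C_2$ whose classes are non-proportional; for generic $p$ their transition values satisfy $t_1<t_2<\mu$, and each contributes kinks to the boundary of the Newton--Okounkov polygon. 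By placing $q\in E$ at the intersection with one of the relevant strict transforms $\tilde C_i$, kinks appear on both the upper and lower boundaries of $\Delta_v(D)$, which together with the corner vertices yields at least five vertices.

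The principal obstacle lies in the converse direction: one must verify that the construction works uniformly for every surface with $\rho(S)>1$ and every ample $D$. This requires ensuring the two Zariski transitions occur at genuinely distinct values $t_1\neq t_2$, that the resulting kinks produce new vertices rather than lying on existing edges, and that any degenerate configurations can be remedied by further infinitely near blow-ups. A careful analysis of the variation of Zariski decomposition and of the geometry of the blown-up surface will be needed to circumvent these potential degeneracies.
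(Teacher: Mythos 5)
Your construction of $v_3$ is correct: with $\rho(S)=1$ every smooth flag on $S$ gives a triangle, exactly as you say, and the paper reaches the same conclusion via Theorem \ref{positive-flags-intro} (since $mv(D)=3$ when $\rho(S)=1$).

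The construction of $v_4$ has a genuine gap, and it is the central difficulty of the problem. If you start from a \emph{smooth} curve $C$ and blow up a chain of infinitely near points along the tangent direction of $C$, the strict transform $C_k$ meets the exceptional configuration \emph{only} at $E_k$; it is disjoint from $E_{1,k},\dots,E_{k-1,k}$. Hence when $C_k$ enters the negative part at $t_0=t_{C_k}$, the connected component of $N_{t_0+\varepsilon}$ containing $C_k$ is the singleton $\{C_k\}$, which meets $E_k$ only at $q'$. By Theorem \ref{pro:KLM}, only $\alpha$ acquires a kink, $\beta$ does not, so there is exactly one interior vertex. Moreover there is only one leftmost vertex (because $\pi^*D\cdot E_k=0$ forces $\alpha(0)=\beta(0)$), and one can check that the $k$ components of $N_\mu$ span a negative-definite hyperplane in $\NS(S_k)$, forcing $P_\mu=0$ and hence a single rightmost vertex. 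Your construction therefore always produces a triangle, never a quadrilateral. The paper's device is to start from a \emph{nodal} curve $C$, blow up the node, and then follow one of the two branches. The strict transform $C_k$ then meets both $E_{1,k}$ (via the unfollowed branch) and $E_k$ (via the followed branch), so it belongs to the \emph{same} connected component as the chain $E_{1,k},\dots,E_{k-1,k}$; this connected component meets $E_k$ at two distinct points, kinking $\alpha$ and $\beta$ simultaneously and producing the two interior vertices needed for a quadrilateral (Proposition \ref{pro:nodal-curve-two-internal}).

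Your approach to the converse is also at variance with the paper and, as stated, incomplete. A single blowup at a general point $p$ with flag $(E_p,q)$ need not produce any transitions at all (for instance on a K3 surface of Picard number two with no curves of small degree through a general $p$, the Zariski decomposition of $\pi^*D-tE_p$ may be trivial throughout the big range), so the existence of two non-proportional curves through $p$ with distinct transition values is not something that can be guaranteed ``for generic $p$'' without further hypotheses. The paper instead reuses the deep nodal-chain construction: it assumes, for contradiction, that for every $k$ the resulting Newton--Okounkov polygon has exactly four vertices, deduces from Lemma \ref{lem:no-additional-curve-large-k} that $N_t$ is supported on $\{E_{1,k},\dots,E_{k-1,k},C_k\}$ for all $t$, computes $P_\mu$ explicitly, and finds $P_\mu^2=\frac{1}{C^2}\det\bigl(\begin{smallmatrix}D^2 & D\cdot C\\ D\cdot C & C^2\end{smallmatrix}\bigr)<0$ when $[C]$ is not proportional to $[D]$, contradicting nefness of $P_\mu$. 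The Hodge Index Theorem, not a genericity argument, is what forces a fifth vertex.
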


In fact, we propose the following:

\begin{conjecture*}\label{conjecture-intro}
Let $S$ be a smooth projective algebraic surface, and let $D$ be an ample Cartier divisor on $S$. For every natural number $k$, $3\le k \le 2\rho(S)+2$, there exists a rank 2 valuation $v$ of $K(S)$, such that the Newton--Okounkov body $\Delta_{v}(D)$ has exactly $k$ vertices.	
\end{conjecture*}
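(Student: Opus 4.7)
The plan is to combine the Zariski-chamber description of Newton--Okounkov polygons underlying the proof of Theorem~\ref{upperbound-intro} with the geometric flexibility gained by allowing valuations coming from flags on arbitrary birational models $\pi\colon \tilde S \to S$, as already exploited for $\rho(S)=1$ in Theorem~\ref{existence-intro}. For a smooth flag $(C,p)$ on $\tilde S$, the upper and lower boundaries of $\Delta_v(\pi^*D)$ are piecewise linear, and each breakpoint corresponds either to a wall crossing of the Zariski decomposition of $\pi^*D - tC$ as $t$ varies, or to a change in the incidence of $p$ with the negative part. Theorem~\ref{upperbound-intro} caps the total vertex count at $2\rho(S)+2$; the conjecture asserts that every intermediate value is attained.

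The first step would be to realise the extremal case $k=2\rho(S)+2$. For this, I would choose a birational model $\tilde S$ and a curve $C\subset \tilde S$ traversing the maximal possible number of chambers of the big cone of $\tilde S$ that lie on the ray $\pi^*D+\bbR_{<0}[C]$, and a point $p\in C$ sufficiently general so that the horizontal intersections are pairwise distinct. If the Zariski chambers of $\tilde S$ that the segment meets contribute $\rho(S)$ distinct breakpoints on the upper boundary, while the corresponding negative parts through $p$ contribute $\rho(S)$ more on the lower boundary, one obtains exactly $2\rho(S)+2$ vertices. This is analogous to the extremal construction in \cite{RS} but exploits the extra freedom of higher models to bypass the geometric obstructions found there.

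The second step, and the main obstacle, is to produce, for each intermediate $k$ with $3\le k < 2\rho(S)+2$, a valuation attaining exactly $k$ vertices. The natural strategy is controlled degeneration: starting from an extremal flag, deform the curve $C$ and the infinitely near point $p$ through a one-parameter family so that prescribed pairs of breakpoints collide, each collision collapsing two vertices into one. Since Newton--Okounkov bodies vary semicontinuously with numerical data, such a specialisation decreases the vertex count by a controlled amount; by arranging the collisions independently one hopes to hit every intermediate integer. The difficulty is to show that a given specialisation merges \emph{only} the chosen pair of vertices and does not produce spurious collinearities that would collapse extra edges, nor open up new breakpoints through interactions with chamber walls of $\tilde S$ that the deformed flag begins to cross. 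This requires a fine understanding of how the Zariski-chamber structure of $\tilde S$ interacts with the variation of $C$ and $p$, likely via a careful choice of models where the relevant chambers are explicitly listable.

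A complementary, inductive approach for small $k$ would be to propagate Theorem~\ref{existence-intro} by pulling back rank $2$ valuations along birational morphisms from $S$ to surfaces of smaller Picard number, whenever such morphisms exist, thereby securing $k=3$ and $k=4$ directly; in conjunction with the deformation argument above, this would reduce the conjecture to bridging the range $5 \le k \le 2\rho(S)+1$ via explicit chamber-merging constructions.
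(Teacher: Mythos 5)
This statement is labelled a \emph{conjecture} in the paper and is not proved there; the authors establish only the upper bound $2\rho(S)+2$ (Theorem \ref{upperbound-sharp-intro}), the full range $3\le k\le mv(D)$ via flags whose divisorial part meets $D$ positively (Theorem \ref{thm:existence-positive}), and, via a specific infinitesimal-flag construction, the attainability of at least four vertices in general and at least five when $\rho(S)>1$ (Theorem \ref{thm:characterize-picard-1}). The remaining range $mv(D)<k\le 2\rho(S)+2$ is precisely what is left open, and the paper says explicitly that the difficulty lies in ``the construction of adequate infinitesimal flags, which we were unable to do at this point.'' Your proposal is therefore an outline rather than a proof, and it rests on the same unresolved ingredient.

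Concretely, there are two gaps and one error. First, in Step 1 you posit a curve $C$ on a model $\tilde{S}$ ``traversing the maximal possible number of chambers'' on the ray $\pi^*D+\bbR_{<0}[C]$, but producing such a curve is the heart of the conjecture: since $D$ is ample, Corollary \ref{cor:vertices-proper-flag} caps the vertex count by $mv(D)$ for any flag whose divisorial part meets the pullback of $D$ positively, so the extremal case forces a $D$-orthogonal (infinitesimal) flag, and Section \ref{sec:examples} shows that the corresponding invariant $mv^{\Null}$ is neither stable under pullback (unlike $mv$, cf.\ Lemma \ref{lem:mv-birational}) nor shown to be attained (there is no analogue of Theorem \ref{thm:existence-positive}). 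Second, in Step 2 you concede that the degeneration argument might merge more vertices than intended or create new wall crossings, and you offer no mechanism to exclude this; that concession is exactly the missing idea. Third, Step 3 is based on a misconception: a rank $2$ valuation of $K(S')$ coming from a flag on a blowdown $S'$ of $S$ \emph{is} a valuation of $K(S)=K(S')$, but the body $\Delta_v(D)$ depends on the ample divisor $D$ on $S$, which is not a pullback from $S'$, and the bound from Theorem \ref{upperbound-sharp-intro} remains $2\rho_D(S)+2=2\rho(S)+2$; the Picard number of $S'$ plays no role. In any case Step 3 is superfluous: for ample $D$ with $\rho(S)>1$, the empty negative configuration already gives $mv(D)\ge 4$, so Theorem \ref{thm:existence-positive} furnishes $k=3$ and $k=4$ directly via smooth flags on $S$.
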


Theorem \ref{existence-intro} on one hand shows that the upper bound can always be attained if the Picard number is 1 (generalizing results of \cite{CFKLRS,GMMFN} for $\bbP^2$) and on the other hand characterizes surfaces with Picard number 1 in terms of Newton--Okounkov bodies.
Our conjecture means that we expect both facts to generalize for arbitrary Picard number, and so that Newton--Okounkov bodies determine Picard numbers.
\medskip

One sees in Theorem \ref{upperbound-intro} that if $D$ is the pullback of a divisor by a birational morphism $S\rightarrow S'$ of smooth projective surfaces, then the Picard number of $S'$ (smaller than that of $S$) may be used to bound the number of vertices of Newton--Okounkov bodies of $D$. This suggests that curves contracted by the maps $S\overset{|kD|}{\dashrightarrow}\bbP^{N}$ (which are birational to the image for large $k$ and big $D$) ``should not count'' in the Picard number upper bound; the appropriate notion is the following.

\begin{definition}
Let $S$ be a smooth projective surface, and $D$ a big and nef Cartier divisor on $S$. We define the Picard number of $S$ relative to $D$, or simply the Picard number of $D$, written $\rho_D(S)$, as the Picard number of $S$ minus the number of irreducible curves $C$ on $S$ with $C\cdot D=0$. (If $D$ is big but not nef, we define $\rho_D(S)$ as $\rho_{P_D}(S)$ where $P_D$ is the positive part in the Zariski decomposition of $D$, see Definition~\ref{def:rho-D}). 
\end{definition}

In Section \ref{sec:prerequisites} we show that this Picard number is invariant under pullback by birational morphisms; this allows us to extend the notion to singular surfaces using resolution of singularities. 
Then we can prove a stronger version of Theorem \ref{upperbound-intro},

\begin{manualtheorem}{\ref*{upperbound-intro}\#}\label{upperbound-sharp-intro}
Let $S$ be a normal projective algebraic surface, and let $D$ be a big Cartier divisor on $S$. Then for every rank 2 valuation $v$ on $K(S)$, the Newton--Okounkov body $\Delta_v(D)$ has at most $2\rho_D(S)+2$ vertices.
\end{manualtheorem}

It is plausible that the conjecture above holds for big divisors (not necessarily ample) replacing $\rho$ by $\rho_D$, but we have little evidence to support this assertion. 
\medskip

In the proofs of Theorems \ref{upperbound-sharp-intro} and \ref{existence-intro} we use the description of Newton--Okounkov polygons in terms of Zariski decompositions introduced by Lazarsfeld--Musta\c{t}\u{a} in \cite{LM09} and we also employ techniques from \cite{KLM12} and \cite{RS}.
However, these techniques do not immediately carry over to valuations whose flags live in higher models $\tilde{S}\rightarrow S$. One important fact that comes to our help is that \emph{curves orthogonal to $D$ never contribute vertices} to a polygon $\Delta_{v}(D)$, cf. Lemma \ref{lem:pullback}.
We also investigate Newton--Okounkov bodies with respect to smooth flags on a fixed model, but since our objective is to apply this study to higher models  $\tilde S \rightarrow S$, it becomes essential to pay attention to ``contracted curves'', i.e., irreducible curves whose intersection number with the given big divisor $D$ is not positive. 
Flags whose divisorial part does intersect $D$ positively turn out to behave in a way similar to smooth flags, which allows us to extend some results of \cite{RS} for ample divisors to the general setting of big divisors, with suitable modifications to take care of contracted curves.
We attach to every big divisor $D$ on a smooth surface $S$ a number $mv(D)$, computed in terms of configurations of negative curves (Definition \ref{def:mv}), and invariant under pullbacks, which allows us to  describe all Newton--Okounkov bodies of $D$ whose flag lives on a given model and intersects $D$ positively:

\begin{introtheorem}\label{positive-flags-intro}
On every smooth projective surface $S$, for every big divisor $D$ and every integer $\delta$ with $3\le \delta \le mv(D)$, there exists a smooth flag $\Y$ in $S$ such that the Newton--Okounkov polygon \(\Delta_{\Y}(D)\) has exactly \(\delta\) vertices.
	
On every normal projective surface $S$, for every big and nef divisor $D$, there is a model $\tilde{S}\rightarrow S$ and a flag $\Y=\{\tilde{S}\supset C \supset \{p\}\}$ such that $D\cdot \pi(C)>0$ and $\Delta_{\Y}$ is a $k$-gon if and only if $3\le k \le mv(D)$. 
\end{introtheorem}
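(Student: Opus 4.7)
The plan is to exploit the Lazarsfeld--Musta\c t\u a description of $\Delta_{\Y}(D)$ for smooth flags in terms of Zariski decompositions, extending the argument of \cite{RS} (which treats ample divisors with flags on $S$ itself) to arbitrary big divisors and flags on higher models. Concretely, fix a smooth flag $\Y = \{\tilde S \supset C \supset \{p\}\}$ on a model $\pi\colon\tilde S\to S$ with $\pi(C)\cdot D>0$, and write the Zariski decomposition of the pullback as $\pi^*D - tC = P_t + N_t$ for $t\in[\nu,\mu]$, where $\mu = \sup\{t:\pi^*D-tC\text{ big}\}$ and $\nu = \ord_p N_0$. Then $\Delta_{\Y}(D)$ is the region between the piecewise-linear graphs $t\mapsto \ord_p N_t$ (lower boundary) and $t\mapsto \ord_p N_t + P_t\cdot C$ (upper boundary), so that vertices of $\Delta_{\Y}(D)$ occur precisely at chamber transitions along the ray $\{\pi^*D - tC\}$ inside the big cone, together with the extremal ``corner'' vertices of the strip $\{t=\nu\}\cup\{t=\mu\}$.

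For the upper bound half of the statement, the key input is Lemma \ref{lem:pullback}: curves $E$ with $\pi^*D\cdot E = 0$ never contribute vertices, so only $D$-negative curves are relevant and the count is insensitive to the choice of higher model $\tilde S$. Combined with the birational invariance of $mv(D)$ established in Section \ref{sec:prerequisites}, a careful bookkeeping of which chamber transitions simultaneously contribute distinct top and bottom breakpoints gives the bound of $mv(D)$ vertices for every $D$-positive flag on every model. For the realization in Part 1, I proceed by induction downward from $v = mv(D)$. The maximal value is attained by choosing $C$ and $p$ adapted to an optimal chain of $D$-negative curves, namely the configuration realizing the maximum in Definition \ref{def:mv}, so that every chamber transition contributes both a top and a bottom breakpoint and both extremal edges are non-degenerate. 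To reduce $v$ by one, one either deforms $p$ off a special curve producing a bottom breakpoint (collapsing a pair of bottom vertices), or moves $C$ inside a generic pencil so as to avoid one $D$-negative curve and suppress the corresponding chamber transition (collapsing a pair of top vertices). Iterating these two moves, combined with the possibility of aligning the lower boundary with one of the vertical edges to lose a single corner, covers every value in $\{3,4,\ldots,mv(D)\}$.

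Part 2 then follows by resolution of singularities. Given a normal projective surface $S$ with big and nef $D$, let $\pi\colon\tilde S\to S$ be a smooth resolution; by birational invariance $mv(\pi^*D)=mv(D)$, and applying Part 1 on $\tilde S$ gives for each $3\le k\le mv(D)$ a smooth flag on $\tilde S$ with the required properties. The ``only if'' direction of the biconditional is precisely the upper bound proved above. The main obstacle is showing that the two reduction moves are available in sufficient generality to sweep out every intermediate value of $v$ without accidental collapses of unrelated breakpoints; this requires that $mv(D)$ be defined with exactly the combinatorics of admissible chains of negative curves realizable by a single flag, so that the maximum is sharp and every smaller integer is attainable by a controlled degeneration. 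A secondary technical point is the birational invariance of $mv$ itself, which must be checked by tracking how exceptional curves of $\pi$ interact with the Zariski chamber structure along the relevant ray.
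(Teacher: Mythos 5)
Your high-level framework is the right one: describe $\Delta_{\Y}(D)$ via Zariski decompositions along the ray $\pi^*D-tC$, identify interior vertices with chamber transitions, use Lemma~\ref{lem:pullback} to discard curves in $\Null(D)$, and reduce Part~2 to Part~1 via resolution of singularities and the birational invariance of $mv$. However, the realization half of Part~1 contains a genuine error. Your second ``reduction move'' — ``moving $C$ inside a generic pencil so as to avoid one $D$-negative curve and suppress the corresponding chamber transition'' — cannot work as stated, because a generic member of a pencil has the \emph{same numerical class} as $C$, and the Zariski decomposition of $D-tC$ depends only on $[C]\in\NS(S)$. Moving $C$ within its linear system therefore leaves the chamber transitions (and hence all upper breakpoints of $\beta$) untouched; at most you can change the position of $p$ relative to the negative curves, which only affects the lower boundary $\alpha$. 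To suppress a chamber transition one must change $[C]$, and this is exactly what the paper does via its Lemma~\ref{ordered-negative}: for each subconfiguration $\mathscr{N}\subseteq\mathscr{N}_0$ one builds a \emph{new} $C$ as a general member of $|mP-\Fix(mP)|$ for a nef class $P$ tailored to $\mathscr{N}$, arranged so that the curves $C_1,\dots,C_k$ enter $N_t$ at strictly increasing times $t_{C_1}<\cdots<t_{C_k}$.

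More broadly, you treat the attainment of the maximum $v=mv(D)$ and the existence of all intermediate values as if they follow from a local perturbation of a single flag; the paper instead does no induction at all. For each $v$ it selects a subconfiguration $\mathscr{N}\le\mathscr{N}_0$ with $mv_D(\mathscr{N})\in\{v,v+1\}$, constructs the curve $C$ of Lemma~\ref{ordered-negative} depending on $\mathscr{N}$, and then chooses $p$ on $C$ either at an intersection point with $C_1$ (giving $mv_D(\mathscr{N})$ vertices) or off $\mathscr{N}$/at $C\cap C_2$ (giving $mv_D(\mathscr{N})-1$). You flag Lemma~\ref{ordered-negative}'s content as ``the main obstacle'' but do not supply an argument for it, and it is precisely this ordered-entry lemma — together with the independence of $[C]$ from $\langle D,E_1,\dots,E_\ell,C_1,\dots,C_k\rangle$ when $k<\rho_D(S)-1$, which controls whether there are one or two rightmost vertices — that makes the count land exactly on $mv_D(\mathscr{N})$. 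Also note that your stated goal ``every chamber transition contributes both a top and a bottom breakpoint'' would overcount: the formula $k+mc_D(\mathscr{N})+4$ corresponds to exactly $mc_D(\mathscr{N})$ transitions contributing both breakpoints (those in the connected component of $\mathscr{N}$ through $p$) and the remaining $k-mc_D(\mathscr{N})$ contributing only an upper one. Finally, Part~1 of the theorem is stated for arbitrary big $D$ while the body argument is carried out for big and nef $D$; you need the observation that $\Delta_v(D)$ is a translate of $\Delta_v(P_D)$ together with the convention $mv(D)=mv(P_D)$ to bridge the two.
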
 
If $D$ is big but not nef, a version of the second statement still holds, only replacing $D\cdot\pi(C)$ by $P_D\cdot\pi(C)$, where $P_D$ stands for the positive part in the Zariski decomposition $D=P_D+N_D$.

The number $mv(D)$ is usually smaller than $2\rho_D(S)+2$, so there are still geometric restrictions making it impossible to reach the maximum number of vertices; this can only be achieved by using flags (on some model $\tilde{S}\rightarrow S$) whose divisorial part does not intersect (the pullback of) $D$. 
These \emph{contracted}, or \emph{infinitesimal flags} are extremely interesting, and are the ones allowing us to prove Theorem \ref{existence-intro}, but it is also challenging to work with them; in particular, it seems that the path to proving the conjecture above should go through the construction of adequate infinitesimal flags, which we were unable to do at this point.

It follows from our analysis that there are important qualitative differences between the Newton--Okounkov bodies of a fixed divisor $D$ with respect to flags whose divisorial part has zero or non-zero intersection with $D$. 
We would like to point out that such a distinction appeared already in the work of Choi, Park and Won \cite[Theorem 1.1]{CPW17}, although we are not aware of any direct connection between both phenomena, since they work with Newton--Okounkov bodies of non-big pseudoeffective divisors, which are just segments and thus the question of the number of points is moot for them.

The paper is organized as follows. In Section \ref{sec:prerequisites} we recall necessary notions and relevant facts from existing literature. Section~\ref{sec:upper-bound} is devoted to the proof of Theorem \ref{upperbound-sharp-intro}. In Section \ref{sec:positive-flags} we study Newton--Okounkov bodies with respect to flags whose divisorial part intersects $D$ positively and prove Theorem \ref{positive-flags-intro}. In Sections \ref{sec:examples} and \ref{sec:existence} we study Newton--Okounkov bodies with respect to flags whose divisorial part does not intersect $D$; first we exhibit examples that show the differences in behaviour with respect to positively intersecting flags, and finally we prove Theorem \ref{existence-intro}.

We work over the complex field. 


\section{Newton--Okounkov bodies on surfaces}
\label{sec:prerequisites}

In this section we recall the terminology used throughout the paper, according to the conventions in \cite{Roe16}, \cite{CFKLRS}, or \cite{GMMFN}. For a general reference on positivity the reader is referred to \cite{La1, La2}.

Let $X$ be a normal, projective, algebraic variety of dimension $n$ defined over the complex numbers with function field $K(X)$. 
The group of Cartier divisors on $X$ will be denoted as $\Div(X)$, whereas $\Num(X)$ will be the subgroup of divisors numerically equivalent to 0, and $\NS(X)=\Div(X)/\Num(X)$ will be the Néron-Severi group of $X$.
Let us recall \cite[1.1.16]{La1}: the N\'eron-Severi group is a free abelian group of finite rank $\rho(X)$, the Picard number of $X$.

A Cartier divisor $D$ on $X$ is called \emph{big} if $h^0(X,\mathcal{O}_X(kD))$ grows like $k^n$. This is equivalent to say that $\mathcal{O}_X(D)$ has maximal Iitaka dimension. Bigness makes sense also for $\mathbb{Q}$- and $\mathbb{R}$-divisors, and it only depends on the numerical equivalence class of $D$, see \cite[(0.3)]{LM09}. 
The convex cone in $\NS(X)_{\bbR}$ of all big $\mathbb{R}$-divisor classes on $X$ is called the big cone, and it will be denoted by $\mathrm{Big}(X)$.

Let $v:K(X)\setminus \{0\} \to \mathbb{Z}^n_{\mathrm{lex}}$ be a discrete valuation of rank $n$ on $X$ whose value group is $\mathbb{Z}^n$ endowed with the lexicographical ordering. For any such a valuation and any big divisor $D$ on $X$ one defines the Newton--Okounkov body of $D$ with respect to $v$ as
\[ 
\Delta_v(D) = \overline{ \left\{ \frac{v(s)}{k} : s \in H^0(k D)\subset K(X), k \in \bbN \right\} } \,,
\]
where $\overline{\{~ \cdot ~ \}}$ stands for the closure with respect to the usual topology of $\mathbb{R}^n$.

The simplest valuations of maximal rank, and the most used in the context of Newton--Okounkov bodies, are defined via flags. Let us consider a flag of irreducible subvarieties 
\[
\Y = \left\{X = Y_0 \supset Y_1 \supset \dots \supset Y_n =\{p\}\right\}.
\]
The flag $\Y$ is said to be \emph{full} and \emph{admissible} if $Y_i$ has codimension $i$ in $X$ and is smooth at the point $p$. The point $p$ is called the \emph{center} of the flag.
It is possible to construct a discrete valuation of rank $n$ from $\Y$ as follows. For every $\phi \in K(X)$ we set $\phi_0 = \phi$, and
\[ 
v_i (\phi)  = \ord_{Y_i}(\phi_{i-1}), \quad \phi_i = \left.\frac{\phi_{i-1}}{g_i^{v_i(\phi)}} \right|_{Y_i} \, \ \ \mbox{ for } \ i=1,\ldots, n,
\] 
where $g_i$ is a local equation of $Y_i$ in $Y_{i-1}$ around $p$. It is clear that $v_{\Y} = (v_1,\dots,v_n)$ defines a rank $n$ discrete valuation $K(X)\setminus \{0\} \rightarrow \bbZ^n_\text{lex}$. Now we can define the Newton--Okounkov body of $D$ with respect to the flag $\Y$  as $\Delta_{\Y}(D) := \Delta_{v_{\Y}}(D)$.

This definition can be extended to the case of an admissible flag on a birational model $\tilde{X}$ of $X$ where $\pi: \tilde{X} \rightarrow X$ is a proper birational morphism. Such a flag 
\[
\Y = \left\{\tilde{X} = Y_0 \supset Y_1 \supset \dots \supset Y_n =\{p\}\right\}
\]
is called \emph{infinitesimal} if $\pi(Y_1)=\pi(p)$ and \emph{proper} if $\codim \pi(Y_i) = i$. If $\tilde{X}=X$ and $\pi=\mathrm{id}_X$, then the flag is said to be \emph{smooth} at $p$. The Newton--Okounkov bodies given by infinitesimal resp. proper resp. smooth flags are called infinitesimal resp. proper resp. smooth.
By \cite[Theorem 2.9]{CFKLRS}, every discrete valuation of rank $n$, and hence every Newton--Okounkov body, is of one of these types.

We are interested in the case $n=2$. 
Let $S$ be therefore a complex projective normal surface.

\begin{remark}
\label{modelRemark}
For the computations of Newton--Okounkov bodies, we shall use the Zariski decomposition of effective divisors, which needs $S$ to be smooth; this is not really restrictive, because passing to a resolution of singularities $\pi:\tilde{S}\to S$, one has $\Delta_v(D)=\Delta_v(\pi^*(D))$. 
On the other hand, by \cite[Theorem 2.9]{CFKLRS} for every rank 2 valuation $v$ there is a model $\pi:\tilde S\to S$ and a smooth flag $\Y=\{\tilde{S}\supset C \supset \{p\}\}$ on $\tilde{S}$ such that $v$ is the valuation associated to $\Y$. Hence we may also assume that $v$ is a flag valuation.
Note that the choice of flag is uniquely determined on every model where it exists: if $\pi':\tilde{S}'\to \tilde{S}$ is a higher model, then the restriction of $\pi'$ to the strict transform $C'$ of $C$ in $\tilde{S}'$ is an isomorphism, so we have a uniquely defined flag $\Y'=\{\tilde{S}'\supset C'\supset\{p'\}\}$ on $\tilde{S}'$ inducing the same valuation, namely the one given by the unique point $p'$ on $C'$ above $p$.

\end{remark}

When $S$ is smooth, every big $\mathbb{Q}$-divisor $D$ admits a Zariski decomposition $D=N_D+P_D$, where $P_D$ (the positive part of $D$) is a nef $\mathbb{Q}$-divisor and $N_D$ (the negative part of $D$) is an effective $\mathbb{Q}$-divisor with the property that, whenever $kD$ and $kN_D$ are integral divisors, multiplication by the section defining $kN_D$ induces an isomorphism
\[
H^0(S,\mathcal{O}(kP_D)) \longrightarrow H^0(S,\mathcal{O}(kD)).
\]
Zariski decompositions can be also defined for big $\mathbb{Q}$- and $\mathbb{R}$-divisors. They are determined by the numerical equivalence class of $D$, so it makes sense to study them as functions on the N\'eron-Severi space \cite{BKS}.

The first explicit examples of Newton--Okounkov bodies appeared in \cite[Section 6]{LM09}. In fact, \cite[Lemma 6.3 and Theorem 6.4]{LM09} gives a description of those bodies associated to any big divisor on $S$, in terms of Zariski decompositions:

\begin{proposition}[Lazarsfeld--Musta\c{t}\u{a}]\label{pro:LM-alphabeta}
Let $D$ be a big divisor on a smooth projective surface $S$ and let $\Y=\left\{ S \supset C \supset \left\{ p \right\} \right\}$ be an admissible flag on $S$. Let $D_t=P_t+N_t$ be the Zariski decomposition of $D_t=D-t C$, $t\in\bbR$. Let $\nu$ be the coefficient of $C$ in $N_0$, and let $\mu = \sup\{t>0 : D - t C \mbox{ is big}\}>\nu\ge 0$. Then the Newton--Okounkov body of $D$ with respect to $\Y$ is given by
\[
\Delta_{\Y} = \{(t,y) \in \bbR^2 : \nu \leq t \leq \mu, \, \alpha(t) \leq y \leq \beta(t) \} \,,
\]
where
\[
\alpha(t)=\ord_p(N_t|_C), \quad \beta(t) = \alpha(t)+(C \cdot P_t).
\]
\end{proposition}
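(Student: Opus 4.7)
The plan is to describe $\Delta_{\Y}(D)\subset \bbR^2$ by analyzing its vertical slices. For rational $t$ and $k$ large and divisible, the slice at $x=t$ consists of the numbers $v_2(s)/k$ with $s\in H^0(kD)$ and $\ord_C(s)=kt$. First I would establish that the achievable $t$'s form $[\nu,\mu]$. The upper bound $t\le\mu$ is by definition of $\mu$: for $t>\mu$ the class $D-tC$ is not big, so $H^0(k(D-tC))=0$ for $k\gg 0$. The lower bound $t\ge \nu$ is forced because the effective divisor $kN_0$ sits in the base locus of $|kD|$, whence every nonzero section vanishes along $C$ with multiplicity at least $k\nu$; bigness of $D-tC$ on $[\nu,\mu)$ supplies sections realizing every rational $t$ in between.

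Next, for a fixed rational $t\in(\nu,\mu)$ and $k$ divisible enough that $kt$, $kN_t$, $kP_t$ are all integral Cartier, the crucial input is the isomorphism
\[
H^0(S,kP_t) \xrightarrow{\sim} H^0\bigl(S,k(D-tC)\bigr) \subset H^0(S,kD),
\]
given by multiplication with the defining section $e_{kN_t}$ of $kN_t$, coming from the Zariski decomposition. This identifies a section $s$ with $\ord_C(s)=kt$ as $s=e_{kN_t}\cdot s'$ with $s'\in H^0(kP_t)$. Since $N_t$ does not contain $C$ for $t\ge\nu$, one computes
\[
v_2(s)=\ord_p\bigl((kN_t)|_C\bigr)+\ord_p(s'|_C)=k\alpha(t)+\ord_p(s'|_C),
\]
so the slice at $t$ equals $\alpha(t)+\tfrac{1}{k}\bigl\{\ord_p(s'|_C):s'\in H^0(kP_t)\bigr\}$.

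The main obstacle, and the technical heart of the argument, is to show that this last set densely fills $[0,\,k\,C\cdot P_t]$ as $k\to\infty$. On the smooth curve $C$, the space $H^0(C,kP_t|_C)$ has degree $k\,C\cdot P_t$ and thus realizes every integer vanishing order at $p$ in $[0,\,k\,C\cdot P_t]$. What one needs is that the restriction map $H^0(S,kP_t)\to H^0(C,kP_t|_C)$ has image of codimension $o(k)$; equivalently, $\vol_{S|C}(P_t)=C\cdot P_t$ for the nef and big class $P_t$ on the surface. This follows from an asymptotic vanishing of $H^1(S,kP_t-C)$, available because $P_t-\tfrac{1}{k}C$ remains big for $t+\tfrac{1}{k}<\mu$. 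Putting this together gives the slice $[\alpha(t),\beta(t)]$. Finally I would pass from rational to real $t$ using continuity: the Zariski decomposition $D-tC=P_t+N_t$ depends piecewise linearly on $t\in[\nu,\mu]$ (cf.\ \cite{BKS}), so $\alpha$ and $\beta$ are continuous, while $\Delta_{\Y}(D)$ is closed by definition; hence the explicit description persists by density.
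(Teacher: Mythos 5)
The paper does not actually prove this proposition; it is quoted verbatim from Lazarsfeld--Musta\c{t}\u{a} \cite[Lemma 6.3, Theorem 6.4]{LM09}, so there is no in-paper proof to compare against. Your sketch does, however, reproduce the overall architecture of the original argument (slicing, the isomorphism $H^0(kP_t)\xrightarrow{\sim} H^0(k(D-tC))$ coming from the Zariski decomposition, the identification of the slice with $\alpha(t)+\tfrac1k\{\ord_p(s''|_C)\}$, and continuity in $t$), and it correctly isolates the technical heart: the equality $\vol_{S|C}(P_t)=C\cdot P_t$ for the big and nef class $P_t$, which asymptotically saturates the slice $[\alpha(t),\beta(t)]$.

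Where I see a genuine gap is in the justification of that restricted-volume equality. You assert that $h^1(S,kP_t-C)=o(k)$ ``because $P_t-\tfrac1k C$ remains big,'' but bigness alone does not yield such an estimate. For a general big class $M$ with nontrivial negative Zariski part $N$, $h^1(kM)$ grows quadratically ($\sim -N^2 k^2/2$), and even for nef classes Fujita's theorem only gives $h^1(kL)=O(k)$ on a surface, which is not $o(k)$. What actually controls the error term here is not bigness but the \emph{nefness} of $P_t$ together with Zariski's theorem that the fixed part of $|kP_t|$ is bounded in $k$ (recorded as Lemma~\ref{lem:sb-nef}(\ref{lem:sb-nef-bounded}) in the paper); equivalently one can invoke the Ein--Lazarsfeld--Musta\c{t}\u{a}--Nakamaye--Popa theorem that $\vol_{S|C}(P_t)=P_t\cdot C$ whenever $P_t$ is nef and big and $C\not\subset \mathbf{B}_+(P_t)=\Null(P_t)$ (which is guaranteed here because $P_t\cdot C>0$; if $P_t\cdot C=0$ the slice degenerates to a point and there is nothing to prove). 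This is a deeper input than your sketch acknowledges, resting on Nakamaye's theorem or, on surfaces, on Zariski's detailed structure theory of base loci. So the plan is sound and the reduction steps are correct, but the cohomological step as written does not close; replacing the appeal to bigness of $P_t-\tfrac1k C$ with the restricted-volume theorem for nef and big classes (or with the bounded-fixed-part argument) would repair it.
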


\begin{remark}
Since by \cite[Theorem 2.9]{CFKLRS} every rank 2 valuation on a surface $S$ is a flag valuation on some model $\tilde{S}\rightarrow S$, the previous proposition holds for every Newton--Okounkov body $\Delta_{v}(D)$, provided that the Zariski decomposition is done on $\tilde{S}$.
\end{remark}
In the study of Zariski decompositions, Bauer, Küronya and Szemberg introduced in \cite{BKS} the concept of Zariski chamber as a subcone building a partition of the big cone
$$
\mathrm{Big}(X)=\bigcup_{P ~\mathrm{big}~ \wedge~ \mathrm{nef}} \Sigma_P,
$$
where
\begin{align*}
\Sigma_P &= \{D \in \mathrm{Big}(X) : \mathrm{Neg}(D)=\mathrm{Null}(P)\}\\
\mathrm{Neg}(D)&=\{C : C \mbox{ is an irreducible component of } N_D\} \\
\mathrm{Null}(D)&=\{C: C \mbox{ is an  irreducible curve with } C \cdot D = 0\}
\end{align*}
Note that the definitions give $\Neg(D)\cup\Null(D)\subset\Null(P_D)$ (the union may be disjoint or not). By exploiting the Zariski chamber decomposition it is not hard to prove that Zariski decompositions depend continously on the big divisor $D$:

\begin{proposition}[{\cite[Prop. 1.16]{BKS}}]\label{pro:continuity-zariski}
	Let $(D_n)$ be a sequence of big divisors converging in $N^1(X)_{\bbR}$ to a big
	divisor $D$. If $D_n=P_n+N_n$ is the Zariski decomposition of $D_n$, and if $D=P+N$ is the
	Zariski decomposition of $D$, then the sequences $(P_n)$ and $(N_n)$ converge to $P$ and $N$ respectively.
\end{proposition}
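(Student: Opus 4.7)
The plan is to exploit the Zariski chamber decomposition to reduce the convergence statement to a linearity argument on each chamber, and then to identify the limit via uniqueness of the Zariski decomposition.

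First I would establish that on each Zariski chamber $\Sigma_P$, the map $D' \mapsto (P_{D'}, N_{D'})$ is the restriction of a linear map on $\NS(X)_{\bbR}$. Indeed, for $D'\in\Sigma_P$ the support of $N_{D'}$ is by definition the fixed set $\Null(P) = \{C_1, \dots, C_r\}$, and the coefficients $a_i(D')$ of $N_{D'} = \sum_i a_i(D') C_i$ are uniquely determined by the orthogonality conditions $P_{D'} \cdot C_j = 0$, i.e.\ by the linear system
\[
\sum_{i=1}^{r} a_i(D')\, (C_i \cdot C_j) \;=\; D' \cdot C_j, \qquad j = 1, \dots, r,
\]
whose coefficient matrix $(C_i \cdot C_j)$ is negative definite (hence invertible) by Zariski's theorem. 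Hence $D' \mapsto N_{D'}$ is linear on $\Sigma_P$ and extends continuously to the closure $\overline{\Sigma_P}$; so does $D'\mapsto P_{D'} = D' - N_{D'}$.

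Next I would invoke the local finiteness of the Zariski chamber decomposition (one of the central results of \cite{BKS}): only finitely many chambers meet a sufficiently small neighborhood of $D$. Therefore, for any subsequence of $(D_n)$, a further subsequence is contained in the closure of a single chamber $\Sigma_{P^*}$ (possibly distinct from the chamber of $D$). By the linearity just established, along this further subsequence $N_n \to N_\infty := \sum_i a_i(D)\, C_i$ and $P_n = D_n - N_n \to P_\infty := D - N_\infty$, with $\{C_1,\dots,C_r\} = \Null(P^*)$.

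Finally I would verify that $(P_\infty, N_\infty)$ is the Zariski decomposition of $D$: effectiveness of $N_\infty$ holds because each $a_i(D) = \lim a_i(D_n) \ge 0$; nefness of $P_\infty$ follows from $P_n\cdot C\ge 0$ for every irreducible $C$ and the continuity of intersection against a fixed class; the support of $N_\infty$ is contained in $\{C_1,\dots,C_r\}$, so its intersection matrix, being a principal submatrix of a negative definite matrix, is itself negative definite; and $P_\infty \cdot C_i = 0$ follows from $P_n \cdot C_i = 0$ by continuity. Uniqueness of Zariski decomposition then gives $(P_\infty, N_\infty) = (P, N)$, and since every subsequence admits a further subsequence converging to the same limit $(P,N)$, the full sequence converges. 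The main obstacle, and the genuinely non-trivial input beyond the linear algebra, is the local finiteness of the chamber decomposition near $D$; this would be quoted from \cite{BKS} or, if proved by hand, derived from the fact that only finitely many irreducible curves can appear in $\Neg(D')$ for $D'$ ranging in a bounded neighborhood of $D$.
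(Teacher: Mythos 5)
Your proof is correct and follows essentially the same route as the cited source \cite{BKS}: local finiteness of the Zariski chamber decomposition near $D$, linearity (hence continuity) of $D'\mapsto(P_{D'},N_{D'})$ on the closure of each chamber via the invertible negative-definite system $\sum_i a_i (C_i\cdot C_j)=D'\cdot C_j$, and identification of the subsequential limit with $(P,N)$ through the Zariski--Fujita characterization ($P$ nef, $N$ effective with negative definite support, $P\cdot C_i=0$) together with uniqueness and the standard subsequence trick. The paper itself does not reprove this statement but quotes it from \cite{BKS}; your argument correctly isolates the only non-elementary input, namely local finiteness of the chamber decomposition (equivalently, finiteness of the curves that can occur in $\Neg(D')$ for $D'$ in a compact neighborhood of $D$ in the big cone).
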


This fact guarantees, for instance, that the functions $\alpha$ and $\beta$ above are continuous, and it will be essential for our arguments later on.

\begin{definition}\label{def:rho-D}
	Let $D$ be a big divisor on a projective algebraic surface $S$.
	We define the \emph{Picard number of $S$ relative to $D$}, if $S$ is smooth, as $\rho_D(S)=\rho(S)-\#\Null(P_D)$. 
	
	The definition can be extended to the case of a possibly singular normal surface $S$, as follows. 
	Observe first that the definition for smooth surfaces is invariant under pullback by proper birational morphisms: if $S$ is smooth and $\pi:\tilde{S}\to S$ is the blowup centered at a point $p$, then $\pi^*(P_D)=P_{\pi^*(D)}$, and by the projection formula \cite[8.1.7]{Ful98},
	\begin{equation*}
	\Null(\pi^*(P_D))=\{\tilde{C} \,:\, C\in \Null(D)\} \cup \{E_p\},
	\end{equation*}
	therefore 
	\begin{equation}\label{eq:rho-birational}
	\#\Null(\pi^*(D))-\#\Null(D)=1=\rho(\tilde{S})-\rho(S).
	\end{equation}
	It follows that $\rho_{\pi^{*}D}(\tilde{S})=\rho_D(S)$.
	Now, if $D$ is a big and nef divisor on a normal surface $S$, we may define $\rho_D(S)$ as $\rho_{\pi^*(D)}(\tilde{S})$ for any resolution of singularities $\pi:\tilde{S}\to S$; equation \eqref{eq:rho-birational} guarantees that this definition does not depend on the choice of a resolution.
\end{definition}	

\begin{remark}\label{rem:null-defneg}
	When $S$ is smooth, as $D$ is big, $P_D$ is big as well, and in particular $P_D^2>0$. The Index Theorem implies that the intersection form restricted to $P_D^\perp\supset\langle\Null(P_D)\rangle$ is negative definite, and the number of curves $\#\Null(P_D)$ is bounded above by $\rho(S)-1$.
	Therefore $1\le\rho_D(S)\le\rho(S)$, with $\rho_D(S)=\rho(S)$ if and only if $D$ is ample.
\end{remark}

The description of Newton--Okounkov bodies of Proposition \ref{pro:LM-alphabeta} was further investigated by K\"uronya, Lozovanu and Maclean in \cite{KLM12} and by Roé and Szemberg in \cite{RS}; we summarize their results as the following theorem:

\begin{theorem}[K\"uronya--Lozovanu--Maclean, Roé--Szemberg]\label{pro:KLM}
Let $D$ be a big divisor on a smooth surface $S$, and consider a smooth flag $\Y=\{S\supset C \supset \{p\} \}$. Let $D_t=D-tC$ be with Zariski decomposition $D_t=P_t+N_t$ as above, and let $\alpha(t)$, $\beta(t)$ be the functions determining the lower and upper boundaries of $\Delta_{\Y}(D)$ given by Proposition \ref{pro:LM-alphabeta}. Then,
	\begin{enumerate}
		\item $\alpha(t)$ is a nondecreasing function. \cite[Theorem B]{KLM12}
		\item $\alpha(t)$ and $\beta(t)$ are piecewise linear functions with finitely many pieces. \cite[Theorem B]{KLM12}
		\item The function $\alpha(t)$ or $\beta(t)$ has a point of non-differentiability at $t=t_0$ if and only if the negative part of $D_t=D-tC$ acquires one or more irreducible components at $t_0$, i.e., for every $\varepsilon>0$ the supports of $N_{t_0}$ and $N_{t_0+\varepsilon}$ differ. \label{item3}
		\item The function $\alpha(t)$ has a point of non-differentiability at $t=t_0$ if and only if one or more irreducible components acquired at $t_0$ are contained in the connected component of $N_{t_0+\varepsilon}$ that passes through $p$. \cite[Proposition 3.3]{RS} \label{item4}
		\item The function $\beta(t)$ has a point of non-differentiability at $t=t_0$ if and only if one or more irreducible components acquired at $t_0$ are contained in a connected component of $N_{t_0+\varepsilon}$ that intersects $C$ away from $p$. \cite[Proposition 3.3]{RS} \label{item5}
	\end{enumerate}
\end{theorem}

\begin{proof}
The only statement that is not immediately clear from the references is (\ref{item3}). By (\ref{item4}) and (\ref{item5}) we only need to show that if all irreducible components acquired at $t_0$ do not pass through $p$, then one or more of them is contained in a connected component of $N_{t_0+\varepsilon}$ that intersects $C$ away from $p$. Finally, this is true since otherwise the support of $N_{t_0}$ and $N_{t_0+\varepsilon}$ would not change.
\end{proof}

The theorem immediately extends to a normal surface as long as $\Y$ is a smooth flag, by computing the Zariski decomposition of the pullback of $D$ to any resolution of singularities of $S$.

The fact that $N_t$ acquires a new irreducible component at $t_0$ can be equivalently expressed by saying that $D_t$ \emph{crosses a Zariski wall} between chambers, i.e., $D_{t_0}$ and $D_{t_0+\varepsilon}$ belong to different cones $\Sigma_P$ for every $\varepsilon>0$.

These results were then used to connect the number of vertices of the Newton--Okounkov body with the Picard number of the surface.

\begin{corollary}\label{cor:2rho+1}
Let $S$ be a smooth surface, $D$ a big divisor on $S$ and $\Y=\{S\supset C \supset\{p\}\}$ a smooth flag. 
Let $D_t=D-tC$ be with Zariski decomposition $D_t=P_t+N_t$ as above,
and let $\mu = \sup\{t>0 : D_t\text{ is big}\}$.

Then $\Delta_{\Y}(D)$ is a polygon with at most $2\rho(S)+1$ vertices.
If $N_\mu$ has $\rho(S)-1$ irreducible components, then $\Delta_{\Y}(D)$ has a single rightmost vertex.
\label{cor:vertices}
\end{corollary}

In the statement of Corollary \ref{cor:2rho+1}, one can ``almost'' replace $\rho(S)$ by $\rho_D(S)$, as we shall see in Theorem \ref{thm:bound}. This has two advantages; first, and most obvious, the bound is sharper for non-ample $D$, and second, $\rho_D(S)$ is invariant under proper birational morphisms $\pi$ (replacing $D$ by $\pi^*(D)$).
We include a short sketch of the proof of Corollary \ref{cor:2rho+1}, since we will use the ideas therein extensively.

\begin{proof}
	The description of the Newton--Okounkov body in Proposition \ref{pro:LM-alphabeta} together with the decomposition of the big cone into Zariski chambers \cite{BKS} show that the number of vertices of $\Delta_{\Y}(D)$ is bounded by two times the number of Zariski walls that $D_t = D - t C$ crosses for $0\le\nu < t < \mu$ and the leftmost and rightmost vertices (at most two of each).
	
	Let us prove the second claim first.
	If $N_\nu$ consists of $\rho-1$ irreducible components, then by the Index Theorem the divisor $P_\mu$, which is orthogonal to all components of $N_\mu$, must be a multiple of a big class. Since $P_\mu$ is not big, it has to be zero, and so $\Delta_{\Y}(D)$ has only one rightmost vertex. 
	
	Next, the minimality of the negative part in the Zariski decomposition ensures that $\supp(N_t) \subset \supp(N_\mu)$ for $0 \leq \nu\leq t \leq \mu$. As $N_\mu$ has negative definite intersection matrix, by the Index Theorem the support of $N_\mu$ can only contain $\rho-1$ curves at most, and therefore at most $\rho-1$ Zariski walls are crossed. 
	Therefore $\Delta_{\Y}(D)$ is a polygon with at most $2(\rho-1)+4=2\rho(S)+2$ vertices.	
	However, if $\Delta_{\Y}(D)$ had exactly $2\rho(S)+2$ vertices, $N_\nu$ would consist of $\rho-1$ irreducible components, and therefore $\Delta_{\Y}(D)$ would have only one rightmost vertex, so $\Delta_{\Y}(D)$ has at most $2\rho(S)+1$ vertices, as claimed.
\end{proof}

Moreover, \cite{RS} show that the bound is attained, i.e., for every positive integer $\rho$ there exist a smooth projective surface $S$, a big divisor $D$ and a smooth flag $\Y$ such that $\Delta_{\Y}(D)$ has $2\,\rho(S)+1$ vertices.
To do so, they determine the numbers of vertices that can be obtained as Newton--Okounkov bodies of every \emph{ample} divisor $D$.
We can partially extend these results of \cite{RS} to the case of non-ample divisors, applying techniques from the theory of stable base loci on surfaces (see \cite{BKS}, \cite{Zar62}). 

Given an effective divisor $D$, let $\Fix(D)$ denote the fixed divisor of $|D|$ (i.e., every $D'$ linearly equivalent to $D$ satisfies $D'\ge \Fix(D)$ and the base locus of $|D-\Fix(D)|$ is zero-dimensional). Let us denote by $SB(D)$ the stable base locus of $D$, i.e., the intersection of the base loci of $|kD|$ for all $k\ge 1$.
We also use the notation $\supp(D)$ for the support of an effective divisor $D$.
The following lemma collects a few known facts:
\begin{lemma}\label{lem:sb-nef}
	Let $D$ be a big divisor on a smooth surface $S$ and let $D=P_D+N_D$ be its Zariski decomposition.	Then
	\begin{enumerate}
		\item The stable base locus $SB(D)$ decomposes as $SB(D)=\supp(N_D)\cup SB(P_D)$.
		\item The stable base locus $SB(P_D)$ is contained in $\supp(\Fix(P_D))$, and it has no isolated points.
		\item The stable base locus $SB(P_D)$ is the union of some of the components of $\Null(P_D)$.
		\item \label{lem:sb-nef-bounded}For every component $E$ of $SB(P_D)$, the coefficient of $E$ in $\Fix(kP_D)$ is bounded, i.e., there exists $\ell \in \bbN$ such that for every component $E$ of $SB(P_D)$, and every $k\ge 0$, the coefficient of $E$ in $\Fix(kP_D)$ is less than $\ell$.
	\end{enumerate}
\end{lemma}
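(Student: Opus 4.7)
I plan to prove (1)--(4) in order. For (1), the essential input is the defining property of the Zariski decomposition: multiplication by the canonical section of $kN_D$ induces an isomorphism $H^0(S,\mathcal{O}(kP_D))\simeq H^0(S,\mathcal{O}(kD))$ whenever $kD$ and $kN_D$ are integral. Thus every element of $|kD|$ is of the form $kN_D+D'$ with $D'\in|kP_D|$, and the base locus of $|kD|$ decomposes as the union of $\supp(N_D)$ with the base locus of $|kP_D|$. Intersecting over sufficiently divisible $k$ yields the assertion $SB(D)=\supp(N_D)\cup SB(P_D)$.

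For (2), the inclusion $SB(P_D)\subset\supp(\Fix(P_D))$ is tautological: any irreducible curve in $SB(P_D)$ is a fixed component of $|kP_D|$ for all sufficiently divisible $k$, hence appears in $\Fix(P_D)$ (interpreted as a $\mathbb{Q}$-divisor via any Cartier multiple). For the ``no isolated points'' clause, I would invoke the surface version of Nakamaye's theorem (equivalently Zariski's theorem in dimension two): for a nef and big divisor $P$, $SB(P)$ is contained in the augmented base locus, which coincides with $\Null(P)$, a finite union of curves. Part (3) then follows at once: $SB(P_D)$ is a purely one-dimensional subset of $\Null(P_D)$, hence a union of some of the irreducible components of $\Null(P_D)$.

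The deepest assertion is (4). The plan is to exploit the finite generation of the section ring $R(P_D)=\bigoplus_{k\ge 0}H^0(S,\lfloor kP_D\rfloor)$, available for positive parts of Zariski decompositions on smooth surfaces by results of Zariski and Cutkosky. Fix homogeneous generators $s_1,\ldots,s_N$ with $s_j\in H^0(\lfloor d_jP_D\rfloor)$. Since every section of $\lfloor kP_D\rfloor$ is a polynomial in the $s_j$, the coefficient of $E$ in $\Fix(\lfloor kP_D\rfloor)$ equals the optimum
\[
\min\Bigl\{\sum_{j=1}^{N}\alpha_j\,\ord_E(s_j)\;:\;\alpha\in\mathbb{Z}_{\ge 0}^N,\;\sum_j\alpha_jd_j=k\Bigr\}.
\]
Because $P_D$ is its own positive part, its asymptotic multiplicity along every curve vanishes, forcing the continuous relaxation of this integer linear program to have zero slope as $k\to\infty$. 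Standard LP considerations then yield that the integer optimum is bounded on each residue class modulo the gcd of the $d_j$, and the uniform bound $\ell$ is obtained by taking the maximum over the finitely many residue classes and the finitely many components $E\in SB(P_D)$ provided by (3).

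The main obstacle is the subtle translation from vanishing asymptotic multiplicity into a combinatorial bound on the LP: one must invoke the full strength of finite generation, and one must guard against the apparent puzzle that the existence of a generator with $\ord_E(s_j)=0$ would remove $E$ from $SB(P_D)$ altogether. The correct statement one really needs is stabilization of the fixed multiplicity in each residue class, which is controlled jointly by the structure of the generators of $R(P_D)$ and the negative definiteness of the intersection form on $\Null(P_D)$ (so in particular the finiteness of components in (3) is essential for converting a per-curve bound into the asserted uniform constant $\ell$).
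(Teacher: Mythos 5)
The paper does not actually prove this lemma: it cites Zariski \cite{Zar62} and \cite[Section 2, Prop.~2.5]{BKS}. So there is no in-paper proof to compare with, but your attempt does have a genuine gap.

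The gap is in (4). You assume that the section ring $R(P_D)=\bigoplus_k H^0(S,\lfloor kP_D\rfloor)$ is finitely generated ``by results of Zariski and Cutkosky.'' This is false in general: Zariski himself produced the standard counterexample (blow up twelve very general points on a smooth plane cubic; the divisor $4H-\sum E_i$ is big and nef, yet its restriction to the strict transform of the cubic is a non-torsion degree-zero line bundle, so the ring is \emph{not} finitely generated). In fact, your approach is internally inconsistent: if $R(P_D)$ were finitely generated with homogeneous generators $s_1,\dots,s_N$, and $E$ were a component of $SB(P_D)$, then \emph{every} generator would satisfy $\ord_E(s_j)>0$ (otherwise a power of some $s_j$ would remove $E$ from the base locus), so your linear program would give a lower bound $\min_j(\ord_E(s_j)/d_j)\cdot k$ that grows linearly in $k$; that contradicts the vanishing of the asymptotic multiplicity $\lim_k \ord_E(\Fix(kP_D))/k=0$, which must hold because $P_D$ is nef. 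So: finite generation forces $SB(P_D)=\emptyset$ and makes (4) vacuous; when $SB(P_D)\neq\emptyset$ the ring is not finitely generated and the LP argument has no footing. Zariski's actual proof of (4) avoids finite generation entirely; it bounds $\Fix(kP_D)$ directly using that the moving part $M_k=kP_D-\Fix(kP_D)$ is nef with $M_k\cdot E\ge 0$ for $E\in\Null(P_D)$, together with a lattice argument exploiting the negative definiteness of the intersection form on $\Null(P_D)$ and subadditivity of the fixed parts.

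There is also a smaller slip in (2). The inclusion $SB(P_D)\subset\supp(\Fix(P_D))$ is \emph{not} tautological: $SB(P_D)$ is defined as an intersection of base loci, and base loci of $|kP_D|$ contain both the fixed curves and the (finitely many) base points of the moving part $|M_k|$; one must show that for large divisible $k$ no such isolated base point persists away from $\supp(\Fix(P_D))$. This is precisely the surface-specific content of Zariski's theorem (and what Nakamaye's description $B_+(P_D)=\Null(P_D)$ alone does not give, since it only provides a one-dimensional set containing $SB(P_D)$, not that $SB(P_D)$ has no isolated points inside it). Parts (1) and (3) (given (2)) are fine.

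\begin{thebibliography}{9}
\bibitem{Zar62} O.~Zariski, \emph{The theorem of Riemann--Roch for high multiples of an effective divisor on an algebraic surface}, Ann.\ of Math.\ (2) \textbf{76} (1962), 560--615.
\bibitem{BKS} T.~Bauer, A.~K\"uronya, T.~Szemberg, \emph{Zariski chambers, volumes, and stable base loci}, J.\ Reine Angew.\ Math.\ \textbf{576} (2004), 209--233.
\end{thebibliography}
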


All these facts are due to Zariski \cite{Zar62}. See \cite[Section 2]{BKS}, especially Proposition 2.5 and its proof, for a modern account with notations consistent with ours.

\begin{notation}\label{notation}
	Let $S$ be a projective normal surface, and assume a big divisor $D$ and an admissible flag $\Y=\{S\supset C\supset\{p\}\}$ are given. 
	We fix the following notations for the rest of the paper.
	\begin{itemize}
		\item[$\diamond$] $D_t=D-t C$ for every $t\in \bbR$.
		\item[$\diamond$] $P_t$ resp.~$N_t$ is the positive resp.~negative part of the Zariski decomposition $D_t=P_t+N_t$ of $D_t$.
		\item[$\diamond$] $\nu=\nu_C(D)\in \bbQ$ is the coefficient of $C$ in $N_0$.
		\item[$\diamond$] $\mu = \mu_C(D) =  \sup\{t>0 : D - t C \mbox{ is big}\}$.
		\item[$\diamond$] For every (irreducible, reduced) curve $C_i$ in the support of $N_\mu$, $$t_{C_i}=t_{C_i}(D,C)=\inf \{t\in \bbR : \ord_E(N_t)>0\}.$$ 
		Note that the support of every $N_t$ with $0\le t\le \mu$ is contained in the support of $N_\mu$.
		\item[$\diamond$] A vertex $p=(t,s)$ of the polygon $\Delta_{\Y}(D)$ is called \emph{leftmost}, \emph{rightmost} or \emph{interior} when $t=\nu$, $t=\mu$ or $\nu<t<\mu$, respectively.
	\end{itemize}
\end{notation}


\section{Upper bound for the number of vertices}
\label{sec:upper-bound}

One might expect that Newton--Okounkov bodies with respect to flags living in birational models of $S$ with larger Picard number could have more vertices. However, extensive computations on blowups of $\bbP^2$ (see \cite{CFKLRS, GMMFN}) have shown that this is not the case for $S=\bbP^2$ and suggest that a bound similar to the ones above might hold not only for smooth flags, but for arbitrary Newton--Okounkov polygons. 
In this section we prove Theorem \ref{upperbound-sharp-intro}, to this effect.
Simultaneously, we shall obtain a sharp bound that extends the bound of \cite{RS} for ample divisor classes to arbitrary big divisor classes.

The main step towards the proof is the observation that, in the association of negative curves to vertices of the polygon, described above, those curves in $\Null(D)$ do not contribute vertices:


\begin{lemma}\label{lem:pullback}
	Let $D$ be a big and nef divisor on a smooth surface $S$, and consider the admissible flag $\Y:=\{S\supset C \supset \{p\}\}$ on $S$.
	Let us write as above $D_t=D-tC$ with Zariski decomposition $D_t=P_t+N_t$.
	Let $(t,s)$ be an interior vertex of $\Delta_{\Y}(D)$ (i.e., with $0<t<\mu_C(D)$).
	Then there exists a curve $F_i$, not belonging to $\Null(D)$ nor to $N_t$, with $t_{F_i}=t$.
\end{lemma}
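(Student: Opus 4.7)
The plan is to proceed by contradiction. Suppose $(t_0,s)$ is an interior vertex of $\Delta_{\Y}(D)$ while every curve $E_j$ with $t_{E_j}=t_0$ and $E_j\notin\supp(N_{t_0})$ belongs to $\Null(D)$; I then aim to show that neither $\alpha$ nor $\beta$ has a point of non-differentiability at $t_0$, contradicting Theorem~\ref{pro:KLM}(3). Since $D$ is big and nef, $N_D=0$ and $\nu=0$, so the condition $0<t_0<\mu$ agrees with the interior condition $\nu<t_0<\mu$; moreover, uniqueness of the Zariski decomposition forces $C\notin\supp(N_t)$ for all $t\ge 0$, so every $E_j$ under consideration is distinct from $C$.

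The main obstacle---and the crucial geometric observation---is the orthogonality $C\cdot E_j=0$ and $F\cdot E_j=0$ for every $F\in\supp(N_t)$ with $t<t_0$ close to $t_0$. To establish it I would fix such a $t>0$, note that $E_j\notin\supp(N_t)$ implies $N_t\cdot E_j=\sum_F n_F(F\cdot E_j)\ge 0$ (distinct irreducible curves have non-negative intersection), and use nefness of $P_t$ together with $D\cdot E_j=0$ to obtain
\[
0\le P_t\cdot E_j = -t\,(C\cdot E_j) - N_t\cdot E_j.
\]
Since $E_j\ne C$ forces $C\cdot E_j\ge 0$ and $t>0$, this inequality yields $C\cdot E_j=0$ and then $N_t\cdot E_j=0$; the latter, being a sum of non-negative summands, forces $F\cdot E_j=0$ for every component $F$ of $N_t$ in the left-hand Zariski chamber.

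With the orthogonality in hand, the remainder is a short linear-algebra computation. Let $V^{\mathrm{old}}=\supp(N_{t_0})$ and let $M^\pm$ denote the one-sided derivatives of $N_t$ at $t_0$; by piecewise linearity within Zariski chambers (and Proposition~\ref{pro:continuity-zariski}), $M^-$ is supported on $V^{\mathrm{old}}$, while $M^+=M^+_{\mathrm{old}}+\sum_j b_j E_j$ with $M^+_{\mathrm{old}}$ supported on $V^{\mathrm{old}}$ and $b_j>0$. The orthogonality $P_t\perp\supp(N_t)$ on each side, combined with $P_{t_0}\cdot E_j=0$ (obtained as the right-hand limit), gives $M^+\cdot C_l=M^-\cdot C_l=-C\cdot C_l$ for every $C_l\in V^{\mathrm{old}}$; using $E_j\cdot C_l=0$ this yields $M^+_{\mathrm{old}}\cdot C_l=M^-\cdot C_l$ for all $C_l$, and negative-definiteness of the intersection form on $V^{\mathrm{old}}$ forces $M^+_{\mathrm{old}}=M^-$. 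Therefore $M^+-M^-=\sum_j b_j E_j$, and the jump in $\alpha'$ at $t_0$ equals $\ord_p((M^+-M^-)|_C)=\sum_j b_j\,\ord_p(E_j|_C)=0$ since $E_j\cap C=\emptyset$; from $\beta=\alpha+C\cdot P_t$ the jump in $\beta'$ differs from that of $\alpha'$ by $-C\cdot(M^+-M^-)=-\sum_j b_j(C\cdot E_j)=0$. Thus both $\alpha$ and $\beta$ are differentiable at $t_0$, contradicting the vertex assumption via Theorem~\ref{pro:KLM}(3).
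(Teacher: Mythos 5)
Your proof is correct, and it reaches the conclusion by a genuinely different route than the paper. The paper argues \emph{forward}: it uses the minimality/uniqueness of the Zariski decomposition to show that at least one newly acquired component must meet $C$ or $N_{t_0}$, and then a one-line nefness computation with $P_{t_0}$ shows that such a component cannot lie in $\Null(D)$. You argue by contradiction: assuming every newly acquired component $E_j$ lies in $\Null(D)$, you apply the nefness computation to $P_t$ for $t<t_0$ to extract the orthogonalities $C\cdot E_j=0$ and $F\cdot E_j=0$ for $F\in\supp(N_{t_0})$, and then show directly (via the one-sided derivatives $M^\pm$ and negative-definiteness on $V^{\mathrm{old}}$) that neither $\alpha$ nor $\beta$ has a kink at $t_0$. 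Both proofs hinge on the same numerical identity $P_t\cdot E=-t(C\cdot E)-N_t\cdot E$ together with $D\cdot E=0$; the paper exploits it at $t=t_0$ to get a strict inequality, you exploit it at $t<t_0$ to get equalities. Your final differentiability step could in fact be compressed by appealing directly to Theorem~\ref{pro:KLM}(4) and~(5): since the $E_j$ meet neither $C$ nor any old component, they cannot lie in the connected component of $N_{t_0+\varepsilon}$ through $p$ nor in one meeting $C$ away from $p$, so $\alpha$ and $\beta$ are both smooth at $t_0$, contradicting~(3). The explicit $M^\pm$ computation is correct but redundant given that Theorem~\ref{pro:KLM} is already in hand.

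Two small points on rigor. First, the assertion that $C\notin\supp(N_t)$ for $0\le t<\mu$ is true and standard (it is implicitly needed for $\alpha(t)=\ord_p(N_t|_C)$ even to make sense), but it does not follow from ``uniqueness of the Zariski decomposition'' quite as immediately as you suggest; a clean argument is that if $c=\ord_C(N_t)>0$ then $D_{t+c}$ has the same positive part $P_t$ as $D_t$, and tracking this reparametrization near the first $t$ where $C$ enters produces a contradiction with piecewise linearity. Second, your passage from ``$F\cdot E_j=0$ for $F\in\supp(N_t)$, $t<t_0$'' to ``$F\cdot E_j=0$ for $F\in V^{\mathrm{old}}=\supp(N_{t_0})$'' silently uses that $\supp(N_t)$ is constant and equal to $\supp(N_{t_0})$ on a left-neighbourhood of $t_0$ (monotonicity of supports plus continuity); this is also standard but worth flagging. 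Neither point affects the validity of the argument.
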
 
	
\begin{proof}
By Theorem \ref{pro:KLM} we know that the supports of $N_{t+\varepsilon}$ and $N_{t}$ differ. Let 
\[N_{t+\varepsilon}=a_1 F_1 + \dots+ a_k F_k\]
be the negative part of $D_{t+\varepsilon}=D-(t+\varepsilon)C$
for $|\varepsilon|$ small enough,
where $a_i=a_i(\varepsilon)$ is a continuous function, piecewise linear, with a point of non-differentiability at $\varepsilon=0$ (at least for \emph{some} $a_i$). Assume that  
$F_i,\dots, F_k$ are the irreducible components of $N_{t+\varepsilon}$ not contained in $N_{t}$, i.e., $a_j(0)=0$ if and only if $j\ge i$. 
We need to prove that at least one of these does not belong to $\Null(D)$. 

By continuity of the Zariski decomposition (see Proposition \ref{pro:continuity-zariski}), no curve $H$ satisfying $P_t\cdot H>0$ belongs to the negative part $N_{t+\varepsilon}$ for small $\varepsilon$. 
Therefore, the components $F_i,\dots, F_k$ all belong to $\Null(P_t)$.
Next we claim that at least one curve in $F_i,\dots, F_k$ intersects $C$ or a component of $N_t$.
Indeed, if we assume by way of contradiction that none of them does, then $N'_{t+\varepsilon}:=a_1F_1+\dots+a_{i-1}F_{i-1}$ satisfies
$$ (D_{t+\varepsilon}-N'_{t+\varepsilon})\cdot F_j=
\begin{cases}
(D_{t+\varepsilon}-N_{t+\varepsilon})\cdot F_j =0 & \text{for }j<i,\\
D\cdot F_j\ge 0 & \text{for }j\ge i; 
\end{cases}$$
and for any curve $E$ not among the $F_j$, we have that $(D_{t+\varepsilon}-N'_{t+\varepsilon})\cdot E \ge P_{t+\varepsilon}\cdot E \ge 0$, 
hence $D_{t+\varepsilon}-N'_{t+\varepsilon}$ is nef, contradicting the minimality of $N_{t+\varepsilon}$ in the Zariski decomposition.

So we can assume without loss of generality that $F_i$ intersects either $C$ or a component of $N_t$. Then $F_i$ does not belong to $\Null(D)$, because if it did, we would have
\[
0= P_{t}\cdot F_i=(D-tC-N_t)\cdot F_i =-(tC+N_t)\cdot F_i< 0,
\]
a contradiction.
\end{proof}

\begin{theorem}[Theorem {\ref{upperbound-sharp-intro}}]\label{thm:bound}
	Let $S$ be a normal projective algebraic surface, and $D$ a big  Cartier divisor on $S$. 
	Then for every rank 2 valuation $v$ on $K(S)$, the polygon $\Delta_{v}(D)$ has at most $2\,\rho_D(S) +2$ vertices.
\end{theorem}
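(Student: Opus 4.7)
My plan is to refine the proof of Corollary~\ref{cor:2rho+1} by combining it with Lemma~\ref{lem:pullback}. Since both $\Delta_v(D)$ and $\rho_D(S)$ are invariant under pullback by birational morphisms, I first pass to a resolution of singularities and assume $S$ is smooth. Replacing $D$ by $P_D$ only translates $\Delta_v(D)$ and leaves $\rho_D(S)$ unchanged, so I may assume $D$ is big and nef. By \cite[Theorem 2.9]{CFKLRS}, the valuation $v$ is a smooth flag valuation $v_\Y$ on some smooth birational model $\pi\colon\tilde S\to S$ with flag $\Y=\{\tilde S\supset C\supset\{p\}\}$; then $\Delta_v(D)=\Delta_\Y(\tilde D)$ for $\tilde D:=\pi^*D$ big and nef on $\tilde S$, and $\rho_{\tilde D}(\tilde S)=\rho_D(S)$ by Definition~\ref{def:rho-D}.

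\textbf{The key bound.} Following Corollary~\ref{cor:2rho+1}, the polygon $\Delta_\Y(\tilde D)$ has at most $2k+4$ vertices, where $k$ counts the values $t\in(0,\mu)$ at which $\supp(N_t)$ changes. By Lemma~\ref{lem:pullback}, one can associate to each such wall a new irreducible component not lying in $\Null(\tilde D)$, and distinct walls yield distinct components, so $k\le|T|$ where $T:=\supp(N_\mu)\setminus\Null(\tilde D)$. To bound $|T|$ I examine the span $V$ of $T\cup\Null(\tilde D)$ in $\NS(\tilde S)_\bbR$ via orthogonality to the big and nef class $P_{\mu-\varepsilon}$ for $\varepsilon>0$ small; here $P_{\mu-\varepsilon}^\perp$ is negative definite of dimension $\rho(\tilde S)-1$ by the Hodge Index Theorem, and $\supp(N_{\mu-\varepsilon})=\supp(N_\mu)$. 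Every $E\in T\subseteq\supp(N_{\mu-\varepsilon})$ lies in $\Null(P_{\mu-\varepsilon})$ by Zariski decomposition; for $F\in\Null(\tilde D)\setminus\{C\}$ the identity
\[
F\cdot P_{\mu-\varepsilon}=-(\mu-\varepsilon)\,F\cdot C-F\cdot N_{\mu-\varepsilon}
\]
is $\le 0$ on the right (or $=0$ directly when $F$ is itself a component of $N_{\mu-\varepsilon}$), while nefness forces $\ge 0$, whence $F\cdot P_{\mu-\varepsilon}=0$. Provided $C\notin\Null(\tilde D)\setminus\supp(N_\mu)$, the whole span $V$ lies in $P_{\mu-\varepsilon}^\perp$, so $|T|\le\rho_D(S)-1$ and the total vertex count is at most $2\rho_D(S)+2$.

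\textbf{Edge case and main obstacle.} If instead $C\in\Null(\tilde D)\setminus\supp(N_\mu)$, then $C\cdot P_{\mu-\varepsilon}$ may be strictly positive and only $V_0:=\mathrm{span}(T\cup(\Null(\tilde D)\setminus\{C\}))\subseteq P_{\mu-\varepsilon}^\perp$ is controlled, giving only $|T|\le\rho_D(S)$. However, nefness of $\tilde D$ forces $\nu=0$ and $C\in\Null(\tilde D)$ forces $\beta(0)-\alpha(0)=C\cdot\tilde D=0$, so the leftmost vertex is unique, saving one. When $|T|\le\rho_D(S)-1$ this already settles the count. In the saturated subcase $|T|=\rho_D(S)$, the dimension identity $\dim V_0=\rho(\tilde S)-1$ forces $V_0=P_{\mu-\varepsilon}^\perp$; applying the same orthogonality computation to $P_\mu$ (which is orthogonal to $T$ by Zariski and to $\Null(\tilde D)\setminus\{C\}$ by the identity above with $\varepsilon=0$) gives $P_\mu\in V_0^\perp=\bbR\cdot P_{\mu-\varepsilon}$, and since $P_\mu^2=0$ (as $D_\mu$ is on the boundary of the big cone) while $P_{\mu-\varepsilon}^2>0$, we conclude $P_\mu=0$, so the rightmost vertex is unique as well. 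Either way the total is at most $2\rho_D(S)+2$. The main obstacle I expect is the careful handling of this edge case, where the natural Hodge--index argument breaks for the flag curve $C$ itself and one must compensate by saving vertices at the endpoints via the subtle proportionality forcing $P_\mu=0$.
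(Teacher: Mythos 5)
Your proof is correct and takes essentially the same route as the paper: same reductions to a smooth surface, smooth flag, and nef $D$; Lemma~\ref{lem:pullback} to discount $\Null(D)$-components; the Index Theorem applied to $P_{\mu-\varepsilon}^\perp$ to bound $|T|$; and the same case split on whether $C\in\Null(D)$, compensating in the edge case by the unique leftmost vertex and the forced $P_\mu=0$ when $|T|$ is saturated. The only difference is cosmetic --- you frame the Index bound via span dimensions where the paper counts curves in $\Null(P_{\mu-\varepsilon})$ directly.
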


\begin{proof}
Since $S$ is normal, for every resolution of singularities $\pi:\tilde{S}\rightarrow S$ one has isomorphisms 
$$
H^0(\mathcal{O}_{\tilde{S}}(k\,\pi^*(D)))\cong H^0(\mathcal{O}_{S}(k\,D))
$$
for all $k$. The pullback divisor $\pi^{*}(D)$ on $D$ is therefore big, 
and the valuations of $K(\tilde{S})=K(S)$ applied to sections of $\cO_S(k\,D)$ and $\cO_{\tilde{S}}(k\,\pi^*(D))$ are the same, so $\Delta_{v}(\pi^*(D))=\Delta_{v}(D)$. On the other hand, $\rho_{\pi^*(D)}(\tilde{S})=\rho_D(S)$ by definition. 
So we can replace $S$ with $\tilde{S}$ and assume that $S$ is smooth.

By the same token, since $v$ is given by a smooth flag $\Y=\{\tilde{S}\supset C \supset \{p\}\}$, on some blowup $\pi:\tilde{S}\rightarrow S$, we may replace $S$ with $\tilde S$ and assume that $v$ is the flag valuation of an admissible flag.

Now let $D=P_D+N_D$ be the Zariski decomposition of $D$. 
The body $\Delta_v(D)$ is a translate of the body $\Delta_v(P_D)$ (a well known fact which holds even in higher dimension, see \cite[Theorem C, 3]{KL17a} together with Remark \ref{modelRemark}). 
Moreover, $\rho_D(S)=\rho_{P_D}(S)$; so we can replace $D$ by $P_D$ and assume that $D$ is nef.


Write as above $D_t=D-tC$ and let $D_t=P_t+N_t$ be its Zariski decomposition for $t\in [0,\mu]$.
By continuity of the Zariski decomposition, $N_\mu$ has the same support as $N_{\mu-\varepsilon}$ for $\varepsilon>0$ small enough.
Since $P_{\mu -\varepsilon}$ is big and nef, by the Index Theorem the number of components of $\Null(P_{\mu-\varepsilon})$ (which includes all components of $N_\mu$) is at most $\rho(S)-1$.
For every component $E$ of $\Null(D)$, 
$$
0\le P_{\mu-\varepsilon}\cdot E=D\cdot E -{\mu-\varepsilon} C\cdot E-N_{\mu-\varepsilon}\cdot E=(-\mu-\varepsilon)\, C\cdot E-N_{\mu-\varepsilon}\cdot E,
$$
so $E$ is either equal to $C$, to a component of $N_{\mu-\varepsilon}$ or to a component of $\Null(P_{\mu-\varepsilon})$.
Since all components of $N_{\mu-\varepsilon}$ are components of $\Null(P_{\mu-\varepsilon})$, either $E=C$ or $E$ is a component of $\Null(P_{\mu-\varepsilon})$.

We now distinguish two cases. 
If $C$ is not a component of $\Null(D)$, the number of components of $N_\mu$ which are not components of $\Null(D)$ is bounded above by 
$$
\rho(S)-1-\#\Null(D)=\rho_D(S)-1, 
$$
so the number of interior vertices in $\Delta_v(D)$ is bounded above by $2\rho_D(S)-2$,
and the same argument as in the proof of Corollary \ref{cor:vertices} implies that the total number of vertices is at most $2\rho_D(S) +1$.
On the other hand, if $C=E$ belongs to $\Null(D)$, the number of components of $N_\mu$ which are not components of $\Null(D)$ is bounded above by 
$$
\rho(S)-1-(\#\Null(D)-1)=\rho_D(S), 
$$
and the number of interior vertices in $\Delta_v(D)$ is bounded above by $2\rho_D(S)$. Moreover, again the same argument as in the proof of Corollary \ref{cor:vertices} shows that, if the number of interior vertices equals $2\rho_D(S)$, then there is just one rightmost vertex, and since $C$ is a component of $\Null(D)$ we have $\beta(0)-\alpha(0)=D\cdot C=0$; therefore there is just one leftmost vertex, for a total number of vertices equal to $2\rho_D(S) +2$. This completes the proof.
\end{proof}

\section{Newton--Okounkov bodies with respect to $D$-positive flags}
\label{sec:positive-flags}

For the case of an \emph{ample} divisor $D$, \cite{RS} determined the possible numbers of vertices of Newton--Okounkov bodies of $D$ with respect to \emph{smooth} flags. 
In this section we extend this result to bodies of \emph{big and nef} divisors on smooth surfaces, with respect to smooth flags whose divisorial part intersects $D$ positively. 
Note that the latter hypothesis is automatically satisfied if $D$ is ample.
As a consequence, we can also deal with singular proper flags for ample divisors.

We begin by recalling the result of \cite{RS}. Let $S$ be a smooth projective surface.
For a configuration of curves $\mathscr{N}=(C_1,\dots,C_k)$ with negative definite intersection matrix, let $mc(\mathscr{N})$ denote the largest number of irreducible components of a connected subconfiguration of $\mathscr{N}$, and define
\[mv(\mathscr{N})=\begin{cases}
k+mc(\mathscr{N})+4 &\text{ if }k<\rho(S)-1,\\
k+mc(\mathscr{N})+3 &\text{ if }k=\rho(S)-1.
\end{cases}
\]
Then the maximum number of vertices of a Newton--Okounkov body of an ample divisor $D$ on $S$ is the maximum of all $mv(\mathscr{N})$ for $\mathscr{N}$ with negative definite intersection matrix \cite[Theorem 1.1]{RS}.

Now assume $D$ is merely big and nef rather than ample, and let $\Null(D)=\{E_1, \dots,E_\ell\}$. 
After the results of previous sections, it should not be a surprise that $\Null(D)$ needs to be taken into account to define a version of $mv$ appropriate for the present setting.
\begin{definition}\label{def:mv}
	For a configuration of curves $\mathscr{N}=(C_1,\dots,C_k,E_1, \dots,E_\ell)$ with negative definite intersection matrix that contains all curves of $\Null(D)$, let $mc_D(\mathscr{N})$ denote the largest number of irreducible components \emph{not in $\Null(D)$} which belong to a single connected component of $\mathscr{N}$. Note that the connected component may include irreducible components belonging to $\Null(D)$, but these do not contribute to $mc_D$. Then we define 
\[mv_D(\mathscr{N})=\begin{cases}
k+mc_D(\mathscr{N})+4 &\text{ if }k<\rho_D(S)-1,\\
k+mc_D(\mathscr{N})+3 &\text{ if }k=\rho_D(S)-1.
\end{cases}
\]
\end{definition}
We will prove below that the maximum number of vertices of a Newton--Okounkov body of $D$ with respect to a proper flag whose divisorial part intersects $D$ positively is 
\begin{equation}
\label{def:mvD}
mv(D):=\max \left\{ mv_D(\mathscr{N}) \,:\, \mathscr{N} \supseteq \Null(D), \text{negative definite}\right\}.
\end{equation}
It may be possible to obtain a Newton--Okounkov body $\Delta_{\Y}(D)$ with more than $mv(D)$ vertices using flags $\Y=\{S\supset C \supset \{p\}\}$ whose divisorial part $C$ belongs to $\Null(D)$, see Example \ref{exa:more-than-mv}.

If $D$ is a big but not necessarily nef divisor, we let $mv(D)=mv(P_D)$ where $P_D$ is the positive part in the Zariski decomposition $D=P_D+N_D$.

\begin{lemma}\label{lem:mv-birational}
	Let $S$ be a smooth projective surface, $D$ a big and nef divisor on $S$, and $\pi_p:\tilde{S}\rightarrow S$ be the blowing-up centered at a point $p\in S$.
	Then $mv(D)=mv(\pi_p^*(D))$.
\end{lemma}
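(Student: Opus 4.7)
The plan is to exhibit a natural bijection $\Phi$ between admissible configurations on $S$ and those on $\tilde S$ which preserves the three quantities entering Definition \ref{def:mv}, namely the number $k$ of components not in $\Null$, the largest connected non-null size $mc_D$, and the threshold $\rho_D(S)-1$; then $mv(D)=mv(\pi_p^*(D))$ will follow by passing to the maximum.

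First I would reduce to the case $D$ big and nef: since $mv(D)=mv(P_D)$ by definition and $\pi_p^*(P_D)=P_{\pi_p^*(D)}$ (recalled in the excerpt), the nef case applied to $P_D$ implies the general one. Then, using the identity $\Null(\pi_p^*(D))=\{\tilde C \mid C\in\Null(D)\}\cup\{E_p\}$ from the excerpt, I define $\Phi(\mathscr{N})=\{\tilde C \mid C\in\mathscr{N}\}\cup\{E_p\}$. Since every irreducible curve on $\tilde S$ is either $E_p$ or the strict transform of a unique curve on $S$, $\Phi$ is a bijection between configurations containing $\Null(D)$ and those containing $\Null(\pi_p^*(D))$. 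Negative-definiteness is preserved because the subspace $\langle\tilde C_1,\ldots,\tilde C_n,E_p\rangle$ equals $\langle\pi_p^*C_1,\ldots,\pi_p^*C_n,E_p\rangle$, and the intersection form on this subspace is the orthogonal sum of the intersection form of $\mathscr{N}$ with the $(-1)$-form on $\bbR E_p$. The number $k$ of non-null components and the threshold $\rho_D(S)-1=\rho_{\pi_p^*D}(\tilde S)-1$ (by \eqref{eq:rho-birational}) are preserved by construction.

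The main step is showing $mc_D(\mathscr{N})=mc_{\pi_p^*(D)}(\Phi(\mathscr{N}))$. The decisive ingredient is the local identity
\[
\tilde C\cdot\tilde C'=C\cdot C'-m_p(C)\,m_p(C')
\]
combined with the standard multiplicity bound $I_p(C,C')\ge m_p(C)\,m_p(C')$. Using these I would set up a correspondence between connected components: given a path $C=C^{(0)},C^{(1)},\ldots,C^{(r)}=C'$ of pairwise intersecting curves in $\mathscr{N}$, each consecutive pair either continues to intersect on $\tilde S$ after blowup, or else both of its curves pass through $p$ and can be linked via $E_p$. Conversely, a path in $\Phi(\mathscr{N})$ either avoids $E_p$---in which case $\tilde C_i\cdot\tilde C_{i+1}>0$ forces $C_i\cdot C_{i+1}>0$ by the identity above---or crosses $E_p$, and the two strict transforms adjacent to $E_p$ correspond to curves through $p$, which intersect on $S$ positively by the multiplicity bound. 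Thus connected components of $\Phi(\mathscr{N})$ are in bijection with those of $\mathscr{N}$, with the only possible extra component being the isolated $\{E_p\}$ arising when $p$ lies on no curve of $\mathscr{N}$; that component contains no non-null curve and hence contributes nothing to $mc$.

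Combining the three preservation statements yields $mv_D(\mathscr{N})=mv_{\pi_p^*(D)}(\Phi(\mathscr{N}))$ for every admissible $\mathscr{N}$, and taking maxima across the bijection gives the claimed equality. The main technical hurdle is the preservation of connected components; the key point is that a positive intersection of $C$ and $C'$ absorbed at $p$ by the blowup is faithfully retained in $\Phi(\mathscr{N})$ through the incidence of both strict transforms with $E_p$, so no connectivity information is lost or artificially created.
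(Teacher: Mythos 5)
Your proof is correct and follows essentially the same strategy as the paper: pass via strict transforms plus $E_p$, check that negative-definiteness is preserved because the intersection form splits as an orthogonal sum with the $(-1)$ block on $E_p$, and verify that the connected-component structure (hence $mc_D$) and the threshold $\rho_D(S)-1$ are preserved. You additionally spell out the connectivity preservation via the identity $\tilde C\cdot\tilde C'=C\cdot C'-m_p(C)m_p(C')$ and the bound $I_p(C,C')\ge m_p(C)m_p(C')$, a step the paper asserts without detail; your initial reduction to the nef case is redundant since the lemma already assumes $D$ is nef.
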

\begin{proof}
First we observe that if $\Null(D)=\{E_1,\dots,E_\ell\}$, then we have $\Null(\pi_p^*(D))=\{\tilde{E}_1,\dots$, $\tilde{E}_\ell,E_p\}$, where for any curve $E$ on $S$ we will denote by $\tilde{E}$ its strict transform in $\tilde{S}$.
	
Second, a configuration of curves $\mathscr{N}=(C_1,\dots,C_k,E_1,\dots,E_\ell)$ has negative definite intersection matrix on $S$ if and only if $\tilde{\mathscr{N}}=(\tilde{C}_1,\dots,\tilde{C}_k,\tilde{E}_1,\dots$, $\tilde{E}_\ell,E_p)$ has negative definite intersection matrix on $\tilde{S}$. 
Indeed, there is an obvious equality of spans 
$$
\langle \tilde C_1,\dots,\tilde C_k,\tilde E_1,\dots, \tilde E_\ell, E_p\rangle=\langle \pi_p^*(C_1),\dots,\pi_p^*(C_k),\pi_p^*(E_1),\dots,\pi_p^*(E_\ell), E_p\rangle
$$
and, since $E_p$ is orthogonal to all pullbacks and has negative selfintersection, the intersection form on the span $\langle \pi_p^*(C_1),\dots$, $\pi_p^*(C_k),\pi_p^*(E_1),\dots,\pi_p^*(E_\ell), E_p\rangle$ is negative definite if and only if it is so on $\langle C_1,\dots,C_k$, $E_1,\dots,E_\ell\rangle$.
	
	Moreover, the connected components of $\mathscr{N}$ which contain curves not in $\Null(D)$ are in bijection with the connected components of $\tilde{\mathscr{N}}$ which contain curves not in $\Null(\pi_p^*(D))$. 
	
	Then, for every configuration $\mathscr{N}=(C_1,\dots,C_k,E_1,\dots,E_\ell)$ with $mv_D(\mathscr{N})=mv(D)$, the configuration of strict transforms $\tilde{\mathscr{N}}:=(\tilde{C}_1,\dots,\tilde{C}_k,\tilde{E}_1,\dots,\tilde{E}_\ell,E_p)$
	has $mv_{\pi_p^*(D)}(\tilde{\mathscr{N}})=mv_D(\mathscr{N})$; this shows that $mv(\pi_p^*(D))\ge mv(D)$.
	Finally, for every configuration $\tilde{\mathscr{N}}=(\tilde{C}_1,\dots,\tilde{C}_k,\tilde{E}_1,\dots,\tilde{E}_\ell,E_p)$ with $mv_{\pi_p^*(D)}(\tilde{\mathscr{N}})=mv(\pi_p^*(D))$, the configuration of push-forwards
	$\mathscr{N}=(C_1,\dots,C_k,E_1,\dots,E_\ell)$ has $mv_D(\mathscr{N})=mv_{\pi_p^*(D)}(\tilde{\mathscr{N}})$; this proves the inequality $mv(D)\ge mv(\pi_p^*(D))$, and the proof is complete.
\end{proof}

By Lemma \ref{lem:mv-birational}, $mv$ is a birational invariant, and this allows us to extend the definition to possibly singular surfaces. If $D$ is a big and nef divisor on a normal projective surface $S$, then we define $mv(D)$ to be equal to $mv(\pi^*(D))$ where $\pi:\tilde{S}\rightarrow S$ is a resolution of singularities of $S$. By the lemma and the fact that two arbitrary resolutions are dominated by some smooth model, $mv(\pi^*(D))$ does not depend on the chosen resolution $\pi$.

\begin{lemma}\label{lem:large-m-null}
	Let $B$ be a big and nef $\bbQ$-divisor on a smooth surface $S$.
	For every integer $m$ large and divisible enough, $C_m=mB-\Fix(mB)$ is a big and nef effective divisor with $\Null(C_m)\subset  \Null (B)$.
\end{lemma}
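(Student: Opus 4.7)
The plan is to verify, in order, that $C_m$ is effective, nef, big, and satisfies $\Null(C_m) \subseteq \Null(B)$ for $m$ sufficiently large and divisible.

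Effectivity is built into the definition. Nefness follows formally: $|C_m|$ has no fixed divisorial components by construction, so its base locus is zero-dimensional; for any irreducible curve $E$ one picks a member $D' \in |C_m|$ not containing $E$ and reads $C_m \cdot E = D' \cdot E \geq 0$.

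For bigness, the key observation would be that $F := \Fix(mB)$ is ``orthogonal'' to $B$. Combining finite generation of the section ring $R(B)$ on a smooth surface (Zariski's theorem) with Lemma~\ref{lem:sb-nef}(3), I would argue that for $m$ large and divisible the components of $F$ are contained in $SB(B)$, hence in $\Null(B)$, giving $B \cdot F = 0$. Then
\[
C_m^2 = (mB - F)^2 = m^2 B^2 + F^2,
\]
and since by Lemma~\ref{lem:sb-nef}(4) the coefficients of $F$ are uniformly bounded by some $\ell$ independent of $m$, the quantity $-F^2$ is bounded by a constant depending only on the negative-definite configuration $\Null(B)$. Thus $C_m^2 > 0$ for $m$ large, and bigness follows from nefness together with the Kodaira-type criterion $C_m^2 > 0$.

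The last claim, $\Null(C_m) \subseteq \Null(B)$, is the main obstacle. My plan is to argue via the Iitaka map of $B$. Let $E$ be an irreducible curve with $C_m \cdot E = 0$. The linear systems $|C_m|$ and $|mB|$ differ by the fixed divisor $F$ and therefore define the same rational map $\phi_m : S \dashrightarrow \bbP^N$; since $B$ is big, for $m$ large $\phi_m$ is birational onto its image. If $E$ were not contracted by $\phi_m$, then $\phi_m(E)$ would be a curve and a generic hyperplane pullback in $|C_m|$ would meet $E$ in $(\deg\phi_m|_E)\cdot(H\cdot\phi_m(E))>0$ points, contradicting $C_m \cdot E = 0$. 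Hence $E$ is contracted by $\phi_m$, so $E$ is contained in the augmented base locus $\mathbb{B}_+(B)$; and by Nakamaye's theorem $\mathbb{B}_+(B) = \Null(B)$ set-theoretically, giving $E \in \Null(B)$. The delicate point is precisely this last identification: an alternative, more hands-on route would use the identity $mB\cdot E = F\cdot E$ together with Lemma~\ref{lem:sb-nef}(4) and try to force $B\cdot E = 0$ by letting $m$ grow, but a uniform control of $F\cdot E$ as $E$ varies over possible components of $\Null(C_m)$ seems to require Nakamaye-type input anyway.
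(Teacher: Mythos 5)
Your treatment of effectivity and nefness agrees with the paper's, and your bigness argument via $C_m^2>0$ is a correct alternative (though the paper gets there more cheaply by noting $C_m\ge Z_m:=mB-a(F_1+\dots+F_k)$ with $a$ the bound from Lemma~\ref{lem:sb-nef}(\ref{lem:sb-nef-bounded}), so $Z_m$ lies in the open big cone for $m\gg 0$; also note that finite generation of $R(B)$ is not automatic for nef and big divisors on surfaces, so the stabilization of $\supp(\Fix(mB))$ you need should be attributed to Noetherian descent rather than to Zariski's theorem).

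The genuine gap is the inclusion $\Null(C_m)\subset\Null(B)$, and you have correctly sensed where the danger lies. The step ``$E$ contracted by $\phi_m\Rightarrow E\subset\mathbb{B}_+(B)$'' is not a free consequence of Nakamaye: since Nakamaye says $\mathbb{B}_+(B)=\Null(B)$, that step is, modulo Nakamaye, exactly the assertion ``$C_m\cdot E=0$ and $E\not\subset SB(B)$ imply $B\cdot E=0$'', which (together with Lemma~\ref{lem:sb-nef}(3) for the case $E\subset SB(B)$) is precisely the statement being proved. So as written the argument is circular, unless you cite and justify a strictly stronger identification of $\mathbb{B}_+(B)$ with the exceptional locus of the Iitaka map. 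Your ``hands-on'' alternative names the right obstruction — uniform control of $\Fix(mB)\cdot E$ over all candidate curves $E$ — but concludes incorrectly that Nakamaye-type input is unavoidable. It is not: the paper first confines $\Null(C_m)$ to a finite list of curves \emph{independent of $m$}. Fix $m_0$ with $Z_{m_0}$ big and write its Zariski decomposition $Z_{m_0}=P+N$; then for every $m\ge m_0$ one has $C_m=(C_m-Z_m)+P+N+(m-m_0)B$ with $C_m-Z_m$ effective and supported on $\Fix(B)$, so every irreducible $E$ outside the fixed finite set $\supp(\Fix(B))\cup\Null(P)$ has $C_m\cdot E\ge P\cdot E>0$. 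For the finitely many remaining candidates $E$ with $B\cdot E>0$, the bound $\Fix(mB)\le a(F_1+\dots+F_k)$ gives $C_m\cdot E=mB\cdot E-\Fix(mB)\cdot E>0$ once $m>m_E$, and taking $m$ beyond all thresholds $m_E$ finishes. This elementary finiteness reduction is what your proposal is missing.
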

\begin{proof}
	Replacing $B$ by a suitable multiple, we may assume that $B$ is an effective divisor on $S$.
	Then  $\Fix(B)$ is an effective divisor $a_1F_1+\dots+a_kF_k$ such that for every positive integer $m$, the support of $\Fix(mB)$ is contained in $\Fix(B)$. Moreover, by Lemma \ref{lem:sb-nef}.(\ref{lem:sb-nef-bounded}), the coefficients of the components $F_i$ of $\Fix(B)$ in $\Fix(mB)$ are bounded by a constant $a$ independent of $m$, i.e., $\Fix(mB)\le a(F_1+\dots+F_k)$ for every $m$. As the big cone is open, for $m$ large enough the divisor
	\[Z_m=mB-a(F_1+\dots+F_k)\]
	is big and effective, so $C_m\ge Z_m$ is big and effective as well.
	On the other hand, $\Fix(C_m)=0$ by definition of $C_m$, so $C_m$ is nef.
	
	Fix now $m_0$ such that $Z_{m_0}$ is big and effective, and let $Z_{m_0}=P+N$ be its Zariski decomposition.
	For every $m$, $E_m=C_m-Z_m$ is effective and supported on $\Fix(B)$, so if $m\ge m_0$, we can write
	$$C_m=E_m+Z_m=E_m+P+N+(m-m_0)B\, .$$
	For every curve $E$ which is not a component of $\Fix(B)$ or $\Null(P)$,
	one has $E_m\cdot E\ge 0$, $N\cdot E\ge 0$ (because every component of $N$ belongs to $\Null(P)$), $B \cdot E \ge 0$ and $P\cdot E>0$, so $C_m\cdot E>0$. It follows that every curve in $\Null(C_m)$ for $m$ large enough is either a component of $\Fix(B)$ or $\Null(P)$, which is a finite collection of curves.
	
	Now to show that $\Null(C_m)\subset  \Null (B)$ for $m$ large enough, observe that for every curve $E$ not in $\Null(B)$,
	\[C_m\cdot E=mB\cdot E -\Fix(mB)\cdot E \ge m(B\cdot E) -a \left(\sum_{i=1}^{k} F_i\cdot E\right) \]
	is positive for $m$ larger than some integer $m_E$. It follows that the claim is satisfied by every integer $m$ larger than $m_0$ and the finitely many numbers $m_E$ where $E$ is a component of $\Fix(B)$ or $\Null(P)$ not in $\Null(B)$.
\end{proof}

\begin{lemma}\label{ordered-negative}
	Let $S$ be a smooth projective surface, and $D$ a big and nef divisor on $S$ with $\Null(D)=\{E_1,\dots,E_\ell\}$. 
	Let  $C_1,\dots,C_k$ be irreducible curves on $S$ such that the intersection matrix of $\mathscr{N}=(C_1,\dots,C_k,E_1,\dots,E_\ell)$ is negative definite.
	Then there is an irreducible curve $C$ with $\Null(C)\subseteq\Null(D)$ with the following properties (using Notation \ref{notation}).
	\begin{enumerate}
		\item $C$ intersects each $C_i$, $i=1,\dots, k$, in at least two points.
		\item For every $t$ with $D_t=D-tC$ pseudo-effective, the negative part $N_t$ of its Zariski decomposition is supported on $\mathscr{N}$.
		\item \label{allC} Every $C_i$, $i=1,\dots, k$, is a component of $N_\mu$.
		\item \label{orderC} $t_{C_1}< \dots < t_{C_k}$.
	\end{enumerate}
	Moreover, $C$ can be chosen in the linear span of $D$, the $C_i$ and the $E_j$ and satisfying $C\cdot D>0$.
\end{lemma}

\begin{proof}
	This proof is adapted to the present setting from \cite[Lemma 5.3]{RS}.
	
	We will prove by induction on $k$ that there are positive rational numbers $a_1, \dots, a_k$ such that, denoting
	$$D-a_1C_1-\dots-a_kC_k=P+N$$ 
	the Zariski decomposition of the divisor on the left hand side, then $P$
	is big and nef with $\Null(P)\subseteq\Null(D)$, and for every sufficiently large $m$ and every irreducible curve $C\in|mP-\Fix(mP)|$ properties \eqref{allC} and \eqref{orderC} are satisfied.
	By Lemma \ref{lem:sb-nef} and Bertini's theorem, for $m$ large and divisible enough there are irreducible curves in $|mP-\Fix(mP)|$ that intersect $D$ positively and each $C_i$ in at least two points, so by Lemma \ref{lem:large-m-null} we shall be done.
	
	If $k=1$, choose a positive integer $a$ such that the divisor class $B=D-(1/a)C_1$ is big and its Zariski decompostion $B=N+P$ satisfies $\Null(P)=\Null(D)$. This is certainly possible: on one hand, because the big cone is open, $B$ is big for $1/a$ small enough; on the other hand, by the continuity of Zariski decomposition and \cite[Proposition 1.3]{BKS}, for $1/a$ small enough we have $N \cup \Null(P)\subseteq\Null(D)$, and every component $E_j$ of $\Null(D)$ which does not satisfy $C_1 \cdot E_j=0$ belongs to $N$, so $N\cup \Null(P)\supseteq\Null(D)$.
	
	Then for every $m$ and every $C\in |mP-\Fix(mP)|$, 
	$$D_{1/m}=D-(1/m)C=(1/a)C_1+N+(1/m)\Fix(mP)$$ 
	is exactly equal to $N_{1/m}$, as it is an effective divisor with negative definite intersection matrix because $\supp(N+(1/m)\Fix(mP))\subseteq\Null(mP)=\Null(D)$ since $P$ is nef (this follows from \cite[Theorem 8.1]{Zar62}). Therefore	$0<t_{C_1}<1/m$,
	and we are done.
	
	Now assume the claim is true for $\mathscr{N}'=(C_1,\dots,C_{k-1},E_1,\dots,E_\ell)$, and let $a,a_1,\dots,a_{k-1}$ be positive rational numbers such that the Zariski decomposition $$B'=D-a_1C_1-\dots-a_{k-1}C_{k-1}=P'+N'$$ 
	has $P'$ big with $\Null(P')=\Null(D)$ and moreover
	\begin{enumerate}
		\item if we denote by $N_t'$ the negative part of the Zariski decomposition of $D'_t=D-tP'$, then
		$\sup\{t\in \bbQ \,:\, C_i \text{ is not contained in }N_t'\}$ is a finite positive real number $t_{C_i}'$,  and
		\item $t_{C_1}'< \dots < t_{C_{k-1}}'$.
	\end{enumerate}
	In this case, since $P'$ is big and nef, Wilson's theorem \cite[Theorem 2.3.9]{La1} guarantees that for $m\gg0$ and every $C'\in |mP'-\Fix(mP')|$, the numbers $t_{C_i}$ such that $C_i$ is a component of $N_t$ for $t>t_{C_i}$ satisfy $t_{C_1}<\dots<t_{C_{k-1}}<1$. 
	
	Moreover, for every $t\in[0,1]$, we have $D_t=(1-t)D+t(ma_1C_1+\dots+ma_{k-1}C_{k-1}+mN'+\Fix(mP'))$, with $(1-t)D$ nef and $ma_1C_1+\dots+ma_{k-1}C_{k-1}+\Fix(mP')$ effective, so by the extremality properties of the Zariski decomposition it follows that 
	$$
	N_t\le t(ma_1C_1+\dots+ma_{k-1}C_{k-1}+N'+\Fix(mP'))
	$$
	 (with equality if and only if $t=1$). In particular, all components of $N_t$ are among the $C_i$ or the $E_j$.
	
	Choose rational numbers $s_i$ with $0=s_0<t_1'<s_1<t_2'<\dots<s_{k-2}<t_{k-1}'<s_{k-1}<1$.
	The choices made guarantee that the irreducible components of $N'_{s_i}$ not in $\Null(D)$ are exactly $C_1, \dots, C_i$, and $P'_{s_i}\cdot C_j>0$ for all $i<j\le k$.
	Therefore by continuity of the Zariski decomposition (\ref{pro:continuity-zariski}), there exist $\varepsilon_1,\dots,\varepsilon_{k}>0$ such that choosing $a_k\in \bbQ$ with $a_k\le \varepsilon_i$, the irreducible components not in $\Null(D)$ of the negative part in the Zariski decomposition of $D-s_i(P'-a_kC_k)$	are also exactly $C_1, \dots, C_i$.
	Thus by choosing a rational number $a_k$ smaller than $\varepsilon_0, \dots, \varepsilon_{k-1}$ and setting 
	\begin{gather*}
		D-a_1C_1-\dots-a_kC_k=P+N\ ,\\
		C\in|mP-\Fix(mP)|,
	\end{gather*}
	again Wilson's theorem guarantees that for $m\gg0$ the irreducible components  not in $\Null(D)$ of the negative part in the Zariski decomposition $P_{s_i}+N_{s_i}$ of $D-s_iC$ are $C_1, \dots, C_i$.
	On the other hand, in the Zariski decomposition
	$D-C=P_1+N_1$
	one has $N_1=ma_1C_1+\dots+ma_kC_k+N+\Fix(mP)$ and therefore
	$$t_{C_{k-1}}<s_{k-1}<t_{C_k}<1,$$
	which completes the induction step.
	
	For the last claim, we first observe that the class $P$ thus constructed is a combination of $D$, the $C_i$ and the $E_j$, and hence (since $\Null(P)=\Null(D)$) the class of $C'$ is also a combination of $D$, the $C_i$ and the $E_j$. 
	Moreover, $C'\cdot D\ge 0$, and we can slightly modify $P$ to obtain a $P''$ to guarantee that $C''\cdot D>0$ (and still satisfy the properties) by taking $P''=P+\varepsilon D$ for $\varepsilon$ small enough.
\end{proof}

\begin{lemma}\label{ordered-negative-maximal}
	Let $D$ a big and nef divisor on $S$ with $\Null(D)=\{E_1,\dots,E_\ell\}$.
	Assume $k<\rho_D(S)-1$ and $C_1,\dots,C_k$ are irreducible curves on $S$ such  $\mathscr{N}=(C_1,\dots,C_k,E_1,\dots,E_\ell)$ is a maximal effective divisor with negative definite intersection matrix, i.e., such that there exists no curve $C'$ distinct from $C_1,\dots,C_k,E_1,\dots,E_\ell$ with $\mathscr{N}+C'$ having negative definite intersection matrix.
	Then the irreducible curve $C$ from Lemma \ref{ordered-negative} can be assumed to have a numerical class linearly independent from $\langle D, E_1,\dots,E_\ell,C_1, \dots, C_k\rangle$ in $\NS(S)_\bbR$.
\end{lemma}

\begin{proof}
	The argument from \cite[Lemma 5.4]{RS} applies verbatim, with $B=P$.
\end{proof}

\begin{theorem}\label{thm:existence-positive}
	On every smooth projective surface $S$, for every big and nef divisor $D$ and every integer $\delta$, $3\le \delta \le mv(D)$, there exists a smooth flag $\Y=\{S\supset C \supset \{p\}\}$ with $D\cdot C>0$ such that the Newton--Okounkov polygon \(\Delta_{\Y}(D)\) has exactly \(\delta\) vertices.
\end{theorem}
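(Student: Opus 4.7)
My approach would be to adapt the strategy of the analogous result for ample divisors in \cite{RS}, using Lemma \ref{ordered-negative} as the main construction tool.

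Given $v$ with $3 \le v \le mv(D)$, the first step is to encode $v$ combinatorially by writing $v = k + m + \epsilon$, where $0 \le k \le \rho_D(S) - 1$ is the desired number of interior $\beta$-vertices, $m \ge 0$ is the desired number of interior $\alpha$-vertices, and $\epsilon \in \{3, 4\}$ counts the boundary vertices (with $\epsilon = 3$ forced precisely when $k = \rho_D(S) - 1$, as in the proof of Corollary \ref{cor:2rho+1}). The definition of $mv(D)$ in \eqref{def:mvD} guarantees that, provided $v \ge 4$, there exists a negative definite configuration $\mathscr{N} = (C_1, \dots, C_k, E_1, \dots, E_\ell) \supseteq \Null(D)$ containing a connected subconfiguration of $m$ non-null curves that realizes this decomposition. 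The degenerate case $v = 3$ with $\rho_D(S) > 1$ will be handled separately: take $\mathscr{N} = \Null(D)$ and choose $C$ to be a general member of $|mP_D - \Fix(mP_D)|$ for $m$ large (cf.\ Lemma \ref{lem:large-m-null}), so that $mD$ and $C$ differ by an effective divisor supported on $\Null(D)$, forcing $P_\mu = 0$ and producing a triangle.

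For $v \ge 4$, I would reorder so that $C_{k-m+1}, \dots, C_k$ form the distinguished subconfiguration and apply Lemma \ref{ordered-negative} to produce an irreducible curve $C$ with $C \cdot D > 0$, meeting each $C_i$ in at least two points, with wall-crossings $t_{C_1} < \dots < t_{C_k}$, and, when $k < \rho_D(S) - 1$, with $[C]$ numerically independent of $\langle [D], [C_1], \dots, [C_k], [E_1], \dots, [E_\ell] \rangle$ in $\NS(S)_{\bbR}$. Choose the flag center $p$ to be a general point of $C \cap C_{k-m+1}$, so that $p$ lies on $C_{k-m+1}$ but on no other curve in $\mathscr{N}$. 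Theorem \ref{pro:KLM} then delivers the count: each $t_{C_i}$ gives a $\beta$-non-differentiability point because $C_i \cdot C \ge 2$ forces $C_i$ to intersect $C$ away from $p$, yielding $k$ interior $\beta$-vertices; for $i \ge k - m + 1$ the curve $C_i$ joins the connected component of $N_{t_{C_i} + \varepsilon}$ passing through $p$, contributing an additional $\alpha$-non-differentiability point and thus $m$ interior $\alpha$-vertices. Nefness of $D$ yields two leftmost vertices since $\beta(0) - \alpha(0) = D \cdot C > 0$; the rightmost edge contributes two vertices when $P_\mu \ne 0$ (guaranteed by the independence of $[C]$ when $k < \rho_D(S) - 1$) and one vertex when $P_\mu = 0$ (forced by the Index-theorem argument of Corollary \ref{cor:2rho+1} when $k = \rho_D(S) - 1$). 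The total is $k + m + \epsilon = v$, as required.

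The principal obstacle will be to verify \emph{exactness}: every putative non-differentiability point must correspond to a genuine change of slope rather than an accidental smooth point at which the slopes happen to agree. Because the wall-crossings are strictly ordered, each open subinterval $(t_{C_{i-1}}, t_{C_i})$ lies in a single Zariski chamber and $\alpha$, $\beta$ are linear there; it then remains to show, by direct intersection-theoretic computation exploiting the generality of $p$, that the jump in the coefficient of $C_i$ in $N_t$ at $t = t_{C_i}$ produces a strict slope change in $\alpha$ (respectively $\beta$) whenever Theorem \ref{pro:KLM} predicts a non-differentiability point. Controlling these slopes uniformly as $v$ varies is the delicate part of the argument and is where the precise numerical independence in Lemma \ref{ordered-negative} will be most heavily exploited.
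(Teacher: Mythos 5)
Your proposal follows the same route as the paper's proof: reduce to negative-definite subconfigurations of a maximal $\mathscr{N}_0$, invoke Lemma \ref{ordered-negative} to produce the flag curve $C$ with strictly ordered wall-crossings, place the center $p$ at an intersection of $C$ with the first curve of the chosen connected subconfiguration, and count vertices via Theorem \ref{pro:KLM}, with the rightmost-vertex dichotomy governed by whether $k=\rho_D(S)-1$. The ``exactness'' worry you raise at the end is in fact already disposed of by the cited results: Theorem \ref{pro:KLM}(3)--(5) are if-and-only-if statements, so every acquired component in the relevant connected component genuinely produces a non-differentiability point, and Lemma \ref{ordered-negative}(4) guarantees the crossing times $t_{C_i}$ are pairwise distinct, so no further slope computation is needed.
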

\begin{remark}\label{rem:mv-determined-null}
	By its definition in \eqref{def:mvD}, the number $mv(D)$ only depends on the set $\Null(D)$, i.e., all big and nef divisors $D$ with the same set $\Null(D)$ have Newton--Okounkov bodies with $3, \dots, mv(D)$ vertices.
\end{remark}

\begin{proof}
	Write as above $\Null(D)=\{E_1,\dots,E_\ell\}$.
	Choose a configuration $\mathscr{N}_0=(C_1,\dots,C_k$, $E_1,\dots,E_\ell)$ with negative definite intersection matrix such that $mv(D)=mv_D(\mathscr{N}_0)$. 
	Assume moreover that its components have been ordered in such a way that one connected component of $\mathscr{N}_0$ contains all $C_i$ for $1\le i \le mc(\mathscr{N}_0)$.
		By the definition of $mv(\mathscr{N})$, it is not restrictive to assume that $\mathscr{N}_{0}$ is maximal, i.e., there exists no curve $C'$ with $\mathscr{N}_{0}+C'$ having negative definite intersection matrix.

	Then, if $k<\rho_D(S)-1$, for every $mc_D(\mathscr{N}_0)\le i\le k$, $mv_D(C_1,\dots,C_i,E_1,\dots,E_\ell)=mv_D(\mathscr{N}_0)-k+i$, and for $0\le i \le mc_D(\mathscr{N}_0)$, $mv_D(C_1,\dots,C_i,E_1,\dots,E_\ell)=mv_D(\mathscr{N}_0)-k-mc_D(\mathscr{N}_0)+2i$.
	On the other hand, if $k=\rho_D(S)-1$, then
	
	\[mv_D(C_1+\dots+C_i)=\begin{cases}
mv_D(\mathscr{N}_0)-k+i+1 &\text{ for every }mc_D(\mathscr{N}_0)\le i< k,\\
mv_D(\mathscr{N}_0)-k-mc_D(\mathscr{N}_0)+2i+1 &\text{ for  }0\le i \le mc_D(\mathscr{N}_0).
\end{cases}
\]
In any event,
	\[\{3,\dots,mv(D)\}=\{mv_D(\mathscr{N}) : \mathscr{N}\le \mathscr{N}_0\}\cup\{mv_D(\mathscr{N})-1: \mathscr{N}\le \mathscr{N}_0\}. \]
	Therefore, it will be enough to prove that, for every choice of irreducible curves $C_1,\dots,C_k$ on $S$ such that the intersection matrix of $\mathscr{N}=(C_1,\dots,C_k,E_1,\dots,E_\ell)$ is negative definite, 
	\begin{itemize}
	\item If $\mathscr{N}$ is maximal, there is a flag $\Y$ such that $\Delta_{\Y}(D)$ has $mv_D(\mathscr{N})$ vertices.
	\item If $\mathscr{N}$ is nonzero or has less than $\rho_D(S)-1$ components, there is a flag $\Y$ such that $\Delta_{\Y}(D)$ has $mv_D(\mathscr{N})-1$ vertices.
	\item If $\mathscr{N}$ is nonzero and has less than $\rho_D(S)-1$ components, there is a flag $\Y$ such that $\Delta_{\Y}(D)$ has $mv_D(\mathscr{N})-2$ vertices.	
\end{itemize}
	Moreover, in each case the divisorial of $\Y$ can be assumed to intersect $D$ positively.
	
	In the case of a maximal $\mathscr{N}$ with less than $\rho_D(S)-1$ components, choose an irreducible curve $C$ satisfying the conditions of Lemma \ref{ordered-negative-maximal}, and let $p$ be one of the intersection points of $C$ and $C_1$ (unless $\mathscr{N}=0$ in which case we choose an arbitrary $p\in C$).
We claim that $D$, $\Y:S\supset C \supset\{p\}$ give a body with $mv_D(\mathscr{N})$ vertices.
On the one hand, since $C\cdot D>0$, it follows that $P_0\cdot C>0$, so $\nu=0$ and $\Delta_{\Y}(D)$ has two leftmost vertices.
Moreover, Theorem \ref{pro:KLM} ensures that $\Delta_{\Y}(D)$ has two interior vertices with first coordinate equal to the number $t_i$ given by Lemmas \ref{ordered-negative} and \ref{ordered-negative-maximal} for $i=1,\dots,mc_D(\mathscr{N})$, whereas it only has an upper interior vertex for $mc_D(\mathscr{N})<i\le k$.
Finally, as the numerical class of $C$ is independent from $\langle D, E_1,\dots,E_\ell,C_1, \dots, C_k\rangle$ in $\NS(S)_\bbR$, by Corollary \ref{cor:vertices} $\Delta_{\Y}(D)$ has two rightmost vertices.
So, the total number of vertices is $mv_D(\mathscr{N})$.

Now choose $C$ verifying the conditions of Lemma \ref{ordered-negative}, so that the class of $C$ belongs to the span of $D$, the $C_i$ and the $E_j$ . The shape of $\Delta_{\Y}(D)$ is as before, but with a single rightmost vertex; if $\mathscr{N}$ has $\rho_D(S)-1$ components (in particular $\mathscr{N}$ is maximal) the total number of vertices is $mv_D(\mathscr{N})$, otherwise it is $mv_D(\mathscr{N})-1$.

Finally, if $\mathscr{N}$ is nonzero we can pick $p$ differently, while keeping the same curve $C$ that satisfies the condtions of Lemma \ref{ordered-negative}. If $mc_D(\mathscr{N})=1$ we let $p$ be a point of $C$ not on $\mathscr{N}$, and if $mc_D(\mathscr{N})>1$ then we take $p$ to be one of the intersection points of $C$ with $C_2$. In this way we obtain one lower point less, so if $\mathscr{N}$ has $\rho_D(S)-1$ components the total number of vertices is $mv_D(\mathscr{N})-1$, otherwise it is $mv_D(\mathscr{N})-2$.
	\end{proof}

\begin{corollary}\label{cor:vertices-proper-flag}
	Let $D$ be a big and nef divisor on the normal projective surface $S$.
	Then there is a proper flag $\Y=\{\tilde{S}\supset C \supset \{p\}\}$ over $S$ such that $D\cdot \pi(C)>0$ and $\Delta_{\Y}$ is a $k$-gon if and only if $3\le k \le mv(D)$. 
\end{corollary}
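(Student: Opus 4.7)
The plan is to split the equivalence into an existence statement and an upper bound, and in both cases to reduce to the setting of a smooth flag on a smooth projective surface so as to apply the analysis of the previous sections.

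For the ``if'' direction, fix a resolution of singularities $\pi : \tilde{S}_0 \to S$, so that $\pi^*(D)$ is big and nef on the smooth surface $\tilde{S}_0$ with $mv(\pi^*(D)) = mv(D)$ by Lemma~\ref{lem:mv-birational}. For each $k$ with $3 \le k \le mv(D)$, Theorem~\ref{thm:existence-positive} produces a smooth flag $\Y = \{\tilde{S}_0 \supset C \supset \{p\}\}$ with $\pi^*(D) \cdot C > 0$ such that $\Delta_{\Y}(\pi^*(D))$ has exactly $k$ vertices. Since $\pi^*(D)$ is trivial on every $\pi$-exceptional curve, the inequality $\pi^*(D) \cdot C > 0$ forces $C$ to be non-exceptional; hence $\pi(C)$ has codimension one in $S$ and $\Y$ is a proper flag over $S$. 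The projection formula then gives $D \cdot \pi(C) = \pi^*(D) \cdot C > 0$, while $\Delta_{\Y}(D) = \Delta_{\Y}(\pi^*(D))$ realizes the prescribed vertex count.

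For the ``only if'' direction, let $\Y = \{\tilde{S} \supset C \supset \{p\}\}$ be a proper flag over $S$ with $D \cdot \pi(C) > 0$ producing a $k$-gon. Resolving the singularities of $\tilde{S}$ away from $p$ we may assume $\tilde{S}$ is smooth; set $D' := \pi^*(D)$. Then $D'$ is big and nef on a smooth surface, $D' \cdot C = D \cdot \pi(C) > 0$, $\Delta_\Y(D) = \Delta_\Y(D')$, and $mv(D') = mv(D)$ by Lemma~\ref{lem:mv-birational}, so it suffices to bound the vertex count of $\Delta_\Y(D')$ by $mv(D')$. Nefness of $D'$ together with $D' \cdot C > 0$ forces $\nu = 0$ and $\alpha(0) = 0 < \beta(0) = D' \cdot C$, yielding exactly two leftmost vertices. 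Set $\mathscr{N} := \supp(N_\mu) \cup \Null(D')$; nefness of $P_\mu$ together with $D' \cdot E = 0$ for $E \in \Null(D')$ forces $\Null(D') \subseteq \Null(P_\mu)$, so $\mathscr{N}$ is a negative definite configuration containing $\Null(D')$. Writing $\mathscr{N} = \{C_1,\ldots,C_k,E_1,\ldots,E_\ell\}$ with $C_i \notin \Null(D')$, Lemma~\ref{lem:pullback} ensures that every interior vertex-time is the $t_{C_i}$ of at least one $C_i$, and items (4)--(5) of Theorem~\ref{pro:KLM} distribute the resulting vertices between $\alpha$ (at most one per $C_i$ joining the connected component of $N_{t+\varepsilon}$ through $p$, hence at most $mc_D(\mathscr{N})$ in total) and $\beta$ (at most one per $C_i$). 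The rightmost vertex count is at most two, and drops to one precisely when $k = \rho_{D'}(\tilde{S}) - 1$, by the Index Theorem argument from Corollary~\ref{cor:vertices}. Summing contributions yields the bound $mv_{D'}(\mathscr{N}) \le mv(D')$; the lower bound $k \ge 3$ is automatic, since $\Delta_\Y(D)$ is a planar convex body of positive area $\tfrac{1}{2}\vol(D)$.

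The main obstacle is the combinatorial bookkeeping in the upper bound: translating the raw wall-crossing count from Theorem~\ref{pro:KLM} into the refined invariant $mv_D(\mathscr{N})$. This hinges on showing that $\Null(D')$-components, though possibly added to $N_t$ at interior times, never contribute a vertex on their own (by Lemma~\ref{lem:pullback}) and do not inflate the per-$C_i$ contribution beyond what is already encoded in the pair $(k, mc_D(\mathscr{N}))$, so that the splitting between $\alpha$- and $\beta$-vertices matches exactly the two cases in the definition of $mv_D(\mathscr{N})$.
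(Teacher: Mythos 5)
Your proof follows essentially the same route as the paper: reduce to a smooth flag on a smooth model via Lemma~\ref{lem:mv-birational} and the projection formula, invoke Theorem~\ref{thm:existence-positive} for existence, and for the upper bound combine Lemma~\ref{lem:pullback} with Theorem~\ref{pro:KLM} to account for vertices against a negative definite configuration containing $\Null(D')$. (The paper outsources the last bookkeeping step to \cite[Proposition 3.3]{RS}; your sketch of how $\alpha$- and $\beta$-vertices distribute matches it.)

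There is, however, one genuine gap in your upper-bound argument. You set $\mathscr{N} := \supp(N_\mu) \cup \Null(D')$ and declare it negative definite on the grounds that $\mathscr{N} \subseteq \Null(P_\mu)$. But $P_\mu$ is \emph{never} big: since $\mu$ is the pseudo-effective threshold, $P_\mu^2 = \vol(D_\mu) = 0$, so the Index Theorem does not apply to $\Null(P_\mu)$, which may fail to be negative definite (and is not even a finite set when $P_\mu = 0$). In particular $mv_{D'}(\mathscr{N})$ is only defined for negative definite $\mathscr{N}$, so this is not a cosmetic issue. The paper sidesteps it by choosing a rational $\mu' < \mu$ close enough to $\mu$ that $\supp(N_{\mu'}) = \supp(N_\mu)$; then $P_{\mu'}$ \emph{is} big and nef, the same orthogonality computation places $\mathscr{N}$ inside $\Null(P_{\mu'})$, and the Index Theorem applies. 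Your argument is repaired by this substitution, but as written the negative definiteness of $\mathscr{N}$ is asserted rather than established.
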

	Recall that $\Y$ is a proper flag over $S$ if $\pi:\tilde{S}\to S$ is a proper birational morphism, $p$ is a smooth point of $C$ and $\tilde{S}$, and $\pi(C)$ is a curve on $S$. Note that the corollary describes all Newton--Okounkov bodies of ample divisors with respect to proper flags. 
\begin{proof}
	By the definition of $mv(D)$, we may replace $S$ by a resolution of singularities and assume $S$ is smooth.
	
	The existence part is then given by Theorem \ref{thm:existence-positive}, so we only need to show that for every flag $\Y=\{\tilde{S}\supset C \supset \{p\}\}$ where $\pi:\tilde S \rightarrow S$ is a proper birational map between smooth surfaces, such that $D\cdot \pi(C)>0$, the number of vertices of $\Delta_{\Y}(D)$ is bounded above by $mv(D)$.
	
	By Lemma \ref{lem:mv-birational}, it holds that $mv(D)=mv(\pi^*(D))$, and by the projection formula \cite[8.1.7]{Ful98}, $\pi^*(D)\cdot C=D\cdot \pi(C)>0$, whereas $\Delta_{\Y}(D)=\Delta_{\Y}(\pi^*(D))$ as already observed in the proof of Theorem \ref{thm:bound}. Therefore we may replace $S$ with $\tilde{S}$ and it is enough to prove the claim for smooth flags $\Y=\{S\supset C\supset \{p\}\}$.
	
	Keeping Notation \ref{notation}, let as above 
	$\Null(D)=\{E_1,\dots, E_\ell\}$, and let $\{C_1, \dots, C_k\}$ be the components of $N_\mu$ not in $\Null(D)$.
	Choose a number $\mu'<\mu$ such that the support of $N_{\mu'}$ is the same as the support of $N_\mu$.
	Since $P_{\mu'}=D-{\mu'} C -N_{\mu'} \le D$ and by hypothesis $D\cdot C>0$, for every component $E_i$ of $\Null(D)$ which is not a component of $N_{\mu'}$ we have $P_{\mu'} \cdot E_i= D\cdot E_i- {\mu'} C \cdot E_i- N \cdot E_i\le 0$, which, $P_{\mu'}$ being nef, implies $P_{\mu'} \cdot E_i=0$. 
	Therefore all curves in the configuration $\mathscr{N}=(C_1,\dots, C_k, E_1,\dots, E_\ell)$ have intersection number 0 with $P_{\mu'}$, which is big (because $\mu'<\mu$) so by the Index Theorem $\mathscr{N}$ has negative definite intersection matrix.
	
	Then by \cite[Proposition 3.3]{RS} and Lemma \ref{lem:pullback}, the number of vertices of $\Delta_{\Y}(D)$ is at most $mv_D(\mathscr{N})$. 
\end{proof}


\section{Newton--Okounkov bodies with respect to $D$-orthogonal flags}
\label{sec:examples}

It is a subtler problem to determine the number of vertices of the Newton--Okounkov bodies $\Delta_{\Y}(D)$ where the divisorial part of the flag $\Y$ belongs to $\Null(D)$.
To begin with, there are only finitely many curves in $\Null(D)$, which entails that there are only finitely many such bodies, and the facts explained in Theorem \ref{thm:existence-positive} and Remark \ref{rem:mv-determined-null} do not hold anymore.

\begin{example}\label{ex:51}
	There exist surfaces $S$ with divisors $D, D'$ satisfying $\Null(D)=\Null(D')$ for which the numbers $k$ such that some $k$-gon arises as Newton--Okounkov body of $D$ or $D'$ with respect to flags with divisorial part in $\Null(D)$ differ.
	
	Let $S$ be the blowup of $\bbP^1 \times \bbP^1$ at a point $p$. There are three curves of negative selfintersection on $S$, namely the exceptional divisor $E_p$ and the strict transforms $F_1$, $F_2$ of the two rulings passing through $p$. Consider the divisors $D=2E_p+F_1+F_2$ and $D'=3E_p+2F_1+F_2$. It is immediate that $\Null(D)=\Null(D')=\{E_p\}$. In addition, consider flags of the form $\Y=\{S\supset E_p \supset \{q\}\}$. According to Galindo, Monserrat and Moreno-\'Avila \cite[Theorem 3.6]{GMMA}, the first component $v_1$ of the valuation $v_{\Y} = (v_1,v_2)$ is a non-positive at infinity divisorial valuation, hence we can compute the Newton--Okounkov bodies of $D$ resp. $D'$ with respect to flags $\Y$ using their explicit calculations presented in \cite{GMMA2}. More precisely, we distinguish three cases:
\begin{enumerate}
\item If $q \in E_p \cap F_1$, then \cite[Theorem 3.12]{GMMA2} applies so that both $\Delta_{\Y}(D)$ and $\Delta_{\Y}(D')$ are quadrilaterals.	
\item If $q \in E_p \cap F_2$, then \cite[Theorem 3.13 (a)]{GMMA2} implies again that both $\Delta_{\Y}(D)$ and $\Delta_{\Y}(D')$ are quadrilaterals.
\item Finally, if $q\in E_p$  does not belong neither to $E_p\cap F_1$ nor to $E_p\cap F_2$, then the Newton--Okounkov body $\Delta_{\Y}(D)$ is a triangle whereas $\Delta_{\Y}(D')$ is a quadrilateral by \cite[Theorem 3.12]{GMMA2}.

\begin{figure}[h]
     \centering
\begin{subfigure}[b]{0.4\textwidth}
         \centering
         \begin{tikzpicture}
        \tkzInit[xmax=3,ymax=2,xmin=-0.5,ymin=-0.5]
        \tkzGrid
        \tkzAxeXY
        \draw[thick, pattern=crosshatch dots, pattern color=black] (0,0) -- (1,1) -- (2,1) -- (1,0) -- (0,0);
        \filldraw[black] (0,0) circle (2pt);
        \filldraw[black] (1,1) circle (2pt);
        \filldraw[black] (2,1) circle (2pt);
        \filldraw[black] (1,0) circle (2pt);
        \end{tikzpicture}
         \caption{$\Delta_{\Y}(D)$ for $q\in E_p\cap F_1$.}
         \label{case1}
\end{subfigure}
\begin{subfigure}[b]{0.4\textwidth}
         \centering
          \begin{tikzpicture}
          \tkzInit[xmax=3,ymax=2,xmin=-0.5,ymin=-0.5]
          \tkzGrid
          \tkzAxeXY
           \draw[thick, pattern=crosshatch dots, pattern color=black] (0,0) -- (1,1) -- (2,2) -- (3,2) -- (2,1) -- (1,0) -- (0,0);
        \filldraw[black] (0,0) circle (2pt);
        \filldraw[black] (2,2) circle (2pt);
        \filldraw[black] (3,2) circle (2pt);
        \filldraw[black] (1,0) circle (2pt);
          \end{tikzpicture}
         \caption{$\Delta_{\Y}(D')$ for $q\in E_p\cap F_1$.}
         \label{case1p}
\end{subfigure}

\begin{subfigure}[b]{0.4\textwidth}
         \centering
         \begin{tikzpicture}
        \tkzInit[xmax=3,ymax=2,xmin=-0.5,ymin=-0.5]
        \tkzGrid
        \tkzAxeXY
        \draw[thick, pattern=crosshatch dots, pattern color=black] (0,0) -- (1,1) -- (2,0) -- (0,0);
        \filldraw[black] (0,0) circle (2pt);
        \filldraw[black] (1,1) circle (2pt);
        \filldraw[black] (2,0) circle (2pt);
        \end{tikzpicture}
         \caption{$\Delta_{\Y}(D)$ for $q\notin F_1, F_2$.}
         \label{case2}
\end{subfigure}
\begin{subfigure}[b]{0.4\textwidth}
         \centering
          \begin{tikzpicture}
          \tkzInit[xmax=3,ymax=2,xmin=-0.5,ymin=-0.5]
          \tkzGrid
          \tkzAxeXY
           \draw[thick, pattern=crosshatch dots, pattern color=black] (0,0) -- (1,1) -- (2,1) -- (3,0) -- (0,0);
           \filldraw[black] (0,0) circle (2pt);
           \filldraw[black] (1,1) circle (2pt);
           \filldraw[black] (2,1) circle (2pt);
           \filldraw[black] (3,0) circle (2pt);
          \end{tikzpicture}
         \caption{$\Delta_{\Y}(D')$ for $q\notin F_1, F_2$.}
         \label{case2p}
\end{subfigure}
        \caption{Newton--Okounkov bodies in Example \ref{ex:51}}
        \label{fig:nobodies}
\end{figure}
\end{enumerate}

The cases (1) and (3) are illustrated in Figure \ref{fig:nobodies}, whereas case (2) behaves in the same manner as (1). These results agree Theorem \ref{thm:existence-positive}.

\end{example}

\begin{example}\label{exa:more-than-mv}
	It is also possible to obtain Newton--Okounkov bodies of $D$ with more than $mv(D)$ vertices by using flags whose divisorial part is orthogonal to $D$.
	
	Let $S\rightarrow\bbP^2$ be a proper birational map (which can be factored as a composition of  point blowups), and let $D$ be the pullback to $S$ of the class of a line. Obviously $\Null(D)$ consists of all the exceptional components, so $\rho_D(S)=1$ and only $\mathscr{N}=\Null(D)$ satisfies the conditions of Definition \ref{def:mv}. Therefore $mv(D)=mv_D(\mathscr{N})=3$. However, it was shown in \cite{CFKLRS} and \cite{GMMFN} that there are choices of $S$ that support flags $\Y$ such that $\Delta_{\Y}(D)$ is a quadrilateral.
\end{example}

	Note that in Example \ref{exa:more-than-mv}, four vertices is the maximal number $2\rho_D(S)+2$ allowed by Theorem \ref{thm:bound}. However, for many surfaces $S$ and divisors $D$ (e.g., the line on $\bbP^2$, but also its pullback to most of the blowups considered in \cite{CFKLRS} and \cite{GMMFN}) there is no flag $\Y$ such that $\Delta_{\Y}(D)$ has $2\rho_D(S)+2$ vertices.
	A general upper bound which takes into account the geometry on $S$ can be given following the ideas of the previous section using the following definition:
\begin{definition}\label{def:mv-orthogonal}
	Assume $D$ is a big and nef divisor such that $\Null(D)=\{E_1,\dots,E_\ell\}$ is nonempty.
	For a configuration of curves $\mathscr{N}=(C_1,\dots,C_k,E_{i_1}, \dots,E_{i_{\ell-1}})$ with negative definite intersection matrix that contains all curves of $\Null(D)$ \emph{but one}, let $mc_D(\mathscr{N})$ denote the largest number of irreducible components \emph{not in $\Null(D)$} which belong to a single connected component of $\mathscr{N}$. Then we define 
	\[mv_D(\mathscr{N})=\begin{cases}
	k+mc_D(\mathscr{N})+3 &\text{ if }k<\rho_D(S),\\
	k+mc_D(\mathscr{N})+2 &\text{ if }k=\rho_D(S),
	\end{cases}
	\]
	and 
	$$
	mv^{\Null}(D):=\max\{mv_D(\mathscr{N})\,:\, \mathscr{N} \text{ negative definite} ,\  \#(\Null(D)\setminus \mathscr{N})=1 \}.
	$$
\end{definition}

\begin{proposition}\label{pro:upper-bound-orthogonal}
On every smooth projective surface $S$, for every big and nef divisor $D$, and every admissible flag $\Y$ on $S$ whose divisorial part belongs to $\Null(D)$, the polygon $\Delta_{\Y}(D)$ has at most $mv^{\Null}(D)$ vertices.
\end{proposition}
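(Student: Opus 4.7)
The plan is to adapt the proof of Corollary \ref{cor:vertices-proper-flag} to the orthogonal case, where the divisorial part $C$ of the flag $\Y=\{S\supset C\supset\{p\}\}$ lies in $\Null(D)$. With Notation \ref{notation}, $D$ nef gives $\nu=0$, while $D\cdot C=0$ forces $\beta(0)-\alpha(0)=C\cdot P_0=0$, so $\Delta_{\Y}(D)$ has exactly one leftmost vertex; this accounts for the unit drop in the constants between Definition \ref{def:mv-orthogonal} and Definition \ref{def:mv}. Choose $\mu'<\mu$ with $\supp(N_{\mu'})=\supp(N_\mu)=:\mathscr{N}_0$; as in Corollary \ref{cor:vertices-proper-flag}, $\mathscr{N}_0\subseteq\Null(P_{\mu'})$ is negative definite by the Hodge Index Theorem applied to the big nef class $P_{\mu'}$. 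Let $\{C_1,\dots,C_k\}=\mathscr{N}_0\setminus\Null(D)$.

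Next I would construct a negative definite configuration $\mathscr{N}$ containing $\Null(D)\setminus\{E\}$ for a suitable $E\in\Null(D)$. If some $E\in\Null(D)\setminus\mathscr{N}_0$ exists, set $\mathscr{N}=\mathscr{N}_0\cup(\Null(D)\setminus\{E\})$: for every added curve $F\in\Null(D)\setminus\mathscr{N}_0$, the inequality $0\le P_{\mu'}\cdot F=-\mu'(C\cdot F)-N_{\mu'}\cdot F$ combined with $C\cdot F\ge 0$ and $N_{\mu'}\cdot F\ge 0$ forces $C\cdot F=N_{\mu'}\cdot F=0$, so these curves are orthogonal to $\mathscr{N}_0$; negative definiteness of the union then follows using that $\Null(D)\subseteq D^\perp$ is itself negative definite by the Hodge Index Theorem applied to $D$. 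Otherwise $\Null(D)\subseteq\mathscr{N}_0$, hence $C\in\mathscr{N}_0$, and we pick $E\in\Null(D)$ strategically and set $\mathscr{N}=\mathscr{N}_0\setminus\{E\}$.

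With this configuration in hand I would count the vertices: the leftmost contributes $1$, and the rightmost contributes $1$ whenever $C\in\mathscr{N}_0$ (since then $P_\mu\cdot C=0$, so $\beta(\mu)=\alpha(\mu)$) and at most $2$ otherwise. The interior vertices are bounded via Theorem \ref{pro:KLM}, Lemma \ref{lem:pullback} and the analysis of \cite[Proposition~3.3]{RS}: at most $k$ non-differentiabilities of $\beta$ (one per non-$\Null(D)$ component of $N_\mu$) plus at most $mc_D(\mathscr{N})$ non-differentiabilities of $\alpha$ (one per non-$\Null(D)$ curve in the connected component of $N_\mu$ passing through $p$). Summing yields a total of at most $k+mc_D(\mathscr{N})+3$ vertices when $k<\rho_D(S)$; in the extremal case $k=\rho_D(S)$ the Index-Theorem argument of Corollary \ref{cor:2rho+1} forces $P_\mu$ to be numerically trivial, removing one rightmost vertex and tightening the bound to $k+mc_D(\mathscr{N})+2$, in agreement with Definition \ref{def:mv-orthogonal}.

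The main obstacle is the second case, when $\Null(D)\subseteq\mathscr{N}_0$, since removing $E$ from $\mathscr{N}_0$ may disconnect the connected component of $N_\mu$ containing $p$ and thereby lower $mc_D(\mathscr{N})$. I would address this by analyzing the block-cut-vertex structure of that connected component and selecting $E$ to be a leaf of its $\Null(D)$-induced subgraph or a vertex of a pendant block, and, when necessary, augmenting $\mathscr{N}$ by auxiliary negative curves on $S$ in order to recover the required connectivity while preserving negative definiteness.
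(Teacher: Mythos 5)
Your plan follows the paper's intended route (the paper simply asserts that the proof of Corollary \ref{cor:vertices-proper-flag} carries over), and most of the technical ingredients you assemble — the single leftmost vertex from $\beta(0)-\alpha(0)=D\cdot C=0$, the negative definiteness of $\supp(N_\mu)$ via $\Null(P_{\mu'})$ and the Index Theorem, the vertex count via Theorem \ref{pro:KLM}, Lemma \ref{lem:pullback} and \cite[Prop.~3.3]{RS}, and the collapse of the rightmost vertex when $k=\rho_D(S)$ — are correct. But there is a genuine gap: you never identify which curve of $\Null(D)$ should be the one excluded from $\mathscr{N}$, and this is exactly what makes the argument work cleanly. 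The missing curve must be $C$ itself, and the key fact you are not using is that $C$ is \emph{never} a component of $N_t$ for $t>\nu=0$. This is a standard consequence of the behaviour of Zariski decomposition along the ray $D-tC$: if $\ord_C(N_{t_0})=a>0$ and one writes $N_{t_0}=aC+N'$, then for $0<s\le t_0$ one has $D_{t_0-s}=P_{t_0}+((a+s)C+N')$, and by uniqueness this \emph{is} the Zariski decomposition of $D_{t_0-s}$, forcing $\ord_C(N_0)=a+t_0>0$ — contradicting $N_0=0$ for $D$ nef. In particular $C\in\Null(D)\setminus\mathscr{N}_0$ always.

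This observation has two consequences for your write-up. First, your ``otherwise'' case ($\Null(D)\subseteq\mathscr{N}_0$, hence $C\in\mathscr{N}_0$) is vacuous, so the entire final paragraph — the block-cut-vertex analysis, choosing a leaf or pendant block, augmenting by auxiliary negative curves — is attacking a non-issue, and as written it is in any case a sketch of intent rather than a proof (you give no construction, and negative definiteness after such augmentation is not addressed). Second, even in your first case, the inequality $P_{\mu'}\cdot F=-\mu'(C\cdot F)-N_{\mu'}\cdot F\le 0$ for ``every added curve $F$'' requires $C\cdot F\ge 0$, which fails precisely when $F=C$ (there $C\cdot F=C^2<0$). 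So your choice of $E$ is forced: you must take $E=C$, otherwise $C$ appears among the added curves and the orthogonality argument breaks. Once $E=C$ is fixed, the ``otherwise'' branch disappears and the first branch closes correctly, matching the configuration $\mathscr{N}=(C_1,\dots,C_k,E_{i_1},\dots,E_{i_{\ell-1}})$ of Definition \ref{def:mv-orthogonal} with the omitted $\Null(D)$-curve being $C$.
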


The proof of the upper bound in Corollary \ref{cor:vertices-proper-flag} carries over to this setting, and we omit it for brevity. 
On the other hand, the analogous result to the existence part (i.e., Theorem \ref{thm:existence-positive}) does not hold for $D$-orthogonal flags.


Invariance under pullback by birational morphisms (i.e., Lemma \ref{lem:mv-birational}) also fails for $mv^{\Null}$. However, this added complexity will turn out to be useful in the next section.

\begin{example}
	Let $\pi:S\rightarrow\bbP^2$ be the blowup centered at a point $p$, and let $D$ the pullback of a line by $\pi$. Then $\Null(D)$ consists of the exceptional divisor $E_p$ only, so $mv^{\Null}(D)=3$. However, there exist further blowups $\eta:\tilde{S}\rightarrow S$ that allow flags $\Y$ such that $\Delta_{\Y}(\eta^*(D))$ is a quadrilateral, cf~Example \ref{exa:more-than-mv}. 
	Therefore, Proposition \ref{pro:upper-bound-orthogonal} implies
	$$mv^{\Null}(\eta^*(D))\ge 4 > mv^{\Null}(D).$$
\end{example}


\section{Infinitesimal flags with maximal negative configurations}
\label{sec:existence}

In the same way that, in Section \ref{sec:positive-flags}, results on Newton--Okounkov bodies with respect to $D$-positive admissible flags were used to describe the Newton--Okounkov bodies with respect to proper (non-necessarily admissible) flags, we next use our results on Newton--Okounkov bodies with respect to $D$-orthogonal flags to describe Newton--Okounkov bodies with respect to infinitesimal flags. 
However, working with infinitesimal flags provides additional flexibility (replacing $S$ with a suitable blowup) which will turn out to be enough to characterize surfaces of Picard number one in terms of Newton--Okounkov bodies.

It will simplify the presentation to use a notion of \emph{relative Zariski decomposition} inspired in \cite[Section 8]{CS93}.
\begin{definition}
	Let $D$ be an arbitrary divisor on the smooth projective surface $S$, and let $\mathscr{N}=\{E_1, \dots, E_k\}$ be a configuration of (irreducible, reduced) curves with negative definite intersection matrix. There is a unique effective $\bbQ$-divisor $N\subn=a_1E_1+\dots+a_kE_k$ 
	satisfying
	\begin{enumerate}
		\item $(D-N\subn)\cdot E_i\ge 0$ for all $i=1,\dots,k$.
		\item $(D-N\subn)\cdot E_i= 0$ for every $i$ such that $a_i>0$.
	\end{enumerate}
	We then set $P\subn=D-N\subn$ and call $D=P\subn+N\subn$ the Zariski decomposition of $D$ with respect to $\mathscr{N}$.
\end{definition}

The properties of the relative Zariski decomposition that we will need are contained in the following lemmas:

\begin{lemma}\label{lem:relzariski-vs-zariski-decomp}
	If $D$ is pseudo-effective and $D=N+P$ is its Zariski decomposition, then for every negative definite configuration $\mathscr{N}$, the relative Zariski decomposition $D=N\subn+P\subn$ satisfies $N\subn\le N$.
\end{lemma}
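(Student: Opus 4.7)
My plan is to prove this by a direct comparison argument based on the negativity of $\mathscr{N}$, essentially the standard ``negativity lemma'' in birational geometry, adapted to our setting.

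First, I would split the negative part of the (usual) Zariski decomposition as $N=N'+N''$, where $N'$ collects the components of $N$ that lie in $\mathscr{N}$ and $N''$ collects the components of $N$ that do not lie in $\mathscr{N}$. The immediate observation is that $N'$ already satisfies condition (1) in the definition of the relative Zariski decomposition: for every $E_i\in\mathscr{N}$,
\[
(D-N')\cdot E_i=(P+N'')\cdot E_i=P\cdot E_i+N''\cdot E_i\ge 0,
\]
because $P$ is nef and $N''$ is effective with support disjoint from $\mathscr{N}$ (so $N''\cdot E_i\ge 0$ for $E_i\in\mathscr{N}$). Thus, if I can show that any effective $\bbQ$-divisor $Y$ supported on $\mathscr{N}$ satisfying (1) is bounded below by $N\subn$, then in particular $N\subn\le N'\le N$ and we are done.

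The heart of the proof is therefore the comparison $N\subn\le Y$ for every such $Y$. I would write $W:=N\subn-Y=W^+-W^-$ with $W^+$, $W^-$ effective, supported on $\mathscr{N}$, and having disjoint supports (as $\bbQ$-divisors on $\mathscr{N}$). By construction the support of $W^+$ is contained in the support of $N\subn$, so by property (2) of the relative Zariski decomposition, $(D-N\subn)\cdot E_i=0$ for every component $E_i$ of $W^+$, i.e.\ $(D-N\subn)\cdot W^+=0$. Combining with $(D-Y)\cdot W^+\ge 0$ (because $Y$ satisfies (1) and $W^+$ is effective supported on $\mathscr{N}$), I get
\[
0\le (D-Y)\cdot W^+ = (D-N\subn)\cdot W^+ + W\cdot W^+ = (W^+)^2 - W^+\cdot W^-.
\]
Now $W^+\cdot W^-\ge 0$ since $W^+$ and $W^-$ are effective divisors with disjoint supports on $\mathscr{N}$, and $(W^+)^2\le 0$ by negative definiteness of $\mathscr{N}$, with equality if and only if $W^+=0$. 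Hence $W^+=0$, that is, $N\subn\le Y$.

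Applying this with $Y=N'$ yields $N\subn\le N'\le N$, which is the desired inequality. The only subtle point in the whole argument is the fact that $W^+\cdot W^-\ge 0$, which requires care with the fact that the $E_i$'s may mutually intersect; but since both $W^+$ and $W^-$ are nonnegative linear combinations of curves from $\mathscr{N}$ with disjoint supports, each individual intersection $E_i\cdot E_j$ entering the product with a positive coefficient is between two distinct irreducible curves, hence nonnegative. I don't expect any real obstacle; the uniqueness part of the definition of $N\subn$ (which justifies that such a decomposition exists and is unique) can be obtained by the same negativity argument applied to two would-be candidates.
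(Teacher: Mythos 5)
Your proof is correct. The paper gives no proof of this lemma (it explicitly says ``Both lemmas are elementary and we leave their proofs to the reader''), and what you have written is precisely the standard Zariski-type negativity argument one would expect: decompose $N_{\!\!\mathscr{N}}-Y$ into positive and negative parts with disjoint supports in $\mathscr{N}$, use property (2) of the relative decomposition and property (1) for the competitor $Y$ to bound $(W^+)^2-W^+\cdot W^-$ from below by $0$, and conclude $W^+=0$ from negative definiteness. The preliminary reduction — showing $N'$ (the part of $N$ supported on $\mathscr{N}$) satisfies property (1), using nefness of $P$ and effectivity of $N''$ with support off $\mathscr{N}$ — is also correctly argued, and as you note the same comparison gives uniqueness of $N_{\!\!\mathscr{N}}$.
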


\begin{lemma}\label{lem:relzariski-p-positive-then-big}
	Let $D=N\subn+P\subn$ be the relative Zariski decomposition of a divisor $D$ with respect to some negative definite configuration $\mathscr{N}$.  If $P\subn^2> 0$, then $D$ is big. 
\end{lemma}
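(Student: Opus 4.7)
The plan is to reduce bigness of $D$ to bigness of its nef part $P$ via the ordinary Zariski decomposition. Since the statement as written becomes false without some positivity on $D$ (for instance $D=-H$ with $H$ very ample and $\mathscr{N}=\emptyset$ gives $P\subn=D$ with $P\subn^{2}>0$ but $D$ not big), the natural assumption, implicit from the context and from the companion Lemma \ref{lem:relzariski-vs-zariski-decomp}, is that $D$ is pseudo-effective, and I will work under that hypothesis. Let $D=P+N$ denote the ordinary Zariski decomposition.

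By Lemma \ref{lem:relzariski-vs-zariski-decomp} we have $N\subn\le N$, so $N-N\subn$ is an effective divisor whose support lies in $\supp(N)\subseteq\Null(P)$, a negative-definite configuration. Rewriting the relative decomposition gives
\[P\subn=P+(N-N\subn).\]
Using the orthogonality $P\cdot N=0$ of the Zariski decomposition together with $P\cdot N\subn=0$ (every component of $\supp(N\subn)\subseteq\supp(N)$ lies in $\Null(P)$), one computes
\[P\subn^{\,2}=P^{2}+(N-N\subn)^{2}\le P^{2},\]
since $(N-N\subn)^{2}\le 0$ by negative-definiteness of $\supp(N)$. Hence $P\subn^{\,2}>0$ forces $P^{2}>0$. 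As $P$ is nef, this makes $P$ big (for a nef divisor on a surface, bigness is equivalent to positive self-intersection, by Riemann--Roch), and therefore $D=P+N$ is big, being the sum of a big and an effective divisor.

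The main obstacle is simply pinning down the pseudo-effectivity of $D$, which is not explicitly recorded in the statement but is needed to invoke Lemma \ref{lem:relzariski-vs-zariski-decomp} (and to avoid the counterexample above); apart from that, the argument is a short manipulation of the two decompositions together with the negative-definiteness of $\supp(N)$.
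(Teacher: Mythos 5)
Your argument is mathematically sound under the hypothesis you add: if $D$ is pseudo-effective with ordinary Zariski decomposition $D=P+N$, then Lemma \ref{lem:relzariski-vs-zariski-decomp} gives $N\subn\le N$, the identity $P\subn = P + (N-N\subn)$ together with $P\cdot N = P\cdot N\subn = 0$ yields $P\subn^2 = P^2 + (N-N\subn)^2\le P^2$, and $P$ nef with $P^2>0$ is big, so $D$ is big. Your counterexample ($D=-H$, $\mathscr{N}=\emptyset$) is also valid and shows the lemma is false if $D$ really is allowed to be arbitrary as the definition preceding it suggests. The paper leaves the proof to the reader, so there is no proof to compare against; on its own terms yours is a clean and correct derivation of the pseudo-effective version of the statement.

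There is one substantive issue you should be aware of, though. In Proposition \ref{pro:nodal-curve-two-internal}, the lemma is invoked precisely in order to \emph{establish} that $D_t=D-tE_k$ is big (``$D_t$ big and hence pseudoeffective'') from the computation $P_{t,\mathscr{N}}^2>0$. Under your formulation, pseudo-effectivity of $D_t$ is an input, so applying the lemma there as written is circular. Two ways to repair this: (i) keep your version and, in the application, argue by continuity: $D_0=D$ is big, the set $[0,\mu)$ of $t$ with $D_t$ big is a half-open interval, and since your inequality $P_{t,\mathscr{N}}^2\le P_t^2=\vol(D_t)$ holds on all of $[0,\mu]$ while $\vol(D_\mu)=0$, one must have $P_{\mu,\mathscr{N}}^2\le0$, forcing $\mu\ge\sqrt{kD^2}$; or (ii) replace the pseudo-effectivity hypothesis with a sign-pinning hypothesis such as ``$P\subn\cdot H>0$ for some big and nef $H$'' (which holds in the application since $P_{t,\mathscr{N}}\cdot D=D^2>0$), and then prove the lemma directly: $P\subn^2>0$ forces one of $\pm P\subn$ to be big by asymptotic Riemann--Roch, and $P\subn\cdot H>0$ with $H$ nef rules out $-P\subn$, so $P\subn$ is big and $D=N\subn+P\subn$ is big. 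Route (ii) gives a lemma that can be applied exactly as the paper does, without a separate continuity argument.
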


Both lemmas are elementary and we leave their proofs to the reader.
\medskip

Fix a big and nef divisor $D$ on a smooth projective surface $S$, with relative Picard number $\rho_D(S)=\rho$.
Our construction starts from an irreducible nodal curve $C$ on $S$, which for simplicity we assume has positive selfintersection, and we will show that there is an infinitesimal flag $\Y$ centered at the node $p$ of the curve $C$, such that $C$ induces two internal vertices on the Newton--Okounkov body of $D$ with respect to $\Y$. This allows us to prove that $D$ admits Newton--Okounkov bodies with four vertices, and also with five if $\rho>1$.

For each \(k>0\), let us denote by \(S_k \rightarrow S\) the composition of the \(k\) successive
blowups
\[S_k \rightarrow S_{k-1} \rightarrow \dots \rightarrow S_1 \rightarrow S_0=S,\]
where \(S_1\rightarrow S_0\) is the blowup with center at \(p\),
\(S_2 \rightarrow S_1\) is the blowup with center at one of the two points
in \(C_1 \cap E_1\), and for every \(i>1\), \(S_{i+1}\rightarrow S_{i}\)
is the blowup with center at the point \(p_i=C_{i}\cap E_{i}\), where \(C_i\) denotes the strict transform of \(C\) on \(S_i\), and $E_i$ stands for the exceptional divisor of $S_i\rightarrow S_{i-1}$. 

Slightly abusing notations, for every divisor $Z$ on a surface $X$ we shall denote by the same symbol $Z$ its pullback by a blowing-up morphism (the spirit of this abuse of notation is consistent with a birational point of view which thinks of $Z$ as a $b$-divisor, see \cite{Cor07}).

Denoting by \(E_{i,k}\) the strict transform on \(S_k\) of the exceptional divisor \(E_i\), the pullback of \(C\) on \(S_k\) (which ---as said above--- we continue to write \(C\) for simplicity of notation)
is \(C_k+2E_{1,k}+3E_{2,k}+\dots+(k+1)E_{k,k}\) and the pullback of
\(E_{i}\) (written \(E_i\) for simplicity) is \(E_{i,k}+E_{i+1,k}+\dots+E_{k,k}\).

All divisors on \(S_k\) we shall be concerned with belong to numerical equivalence classes which are linear combinations of \([D], [C], [E_1], \dots, [E_k]\). 
They are moreover orthogonal to \(E_{i,k}\) for \(i=2,\dots,k-1\) and thus they actually
belong to the span \(\langle [D], [C], [E], [E_1]\rangle\) where
\(E=\sum_{i=1}^{k} i E_{i,k}\). In particular, \(C_k=C-E-E_1\). Note also that \(C_k\) intersects \(E\) at the components \(E_{1,k}\) and $E_{k,k}$ of $E$.

\begin{proposition}\label{pro:nodal-curve-two-internal}
	Let $D$ be a big and nef divisor on a smooth projective surface $S$, and $C$ an irreducible curve on $S$ with a node at the point $p$ and not belonging to $\Null(D)$. 
	For every integer $k>0$ let $S_k\rightarrow S$ be the composition of blowups determined by $C$ as described above, and consider $\Y=\{S_k\supset E_k\supset \{p_k\}\}$ where $p_k=C_k\cap E_k$.
	If \(k>(D\cdot C)/\sqrt{D^2}-1\), then $\Delta_{\Y}(D)$ has at least two internal vertices.
\end{proposition}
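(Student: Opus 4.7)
\emph{Setup and first regime.} I would work on $S_k$, identifying $D$ with its pullback (still big, nef, and orthogonal to every exceptional component), so in Notation~\ref{notation} one has $\nu=0$ and $\Delta_{\Y}(D)$ is parametrised by $t\in[0,\mu]$. For $t>0$ small, the relation $D_t\cdot E_{k-1,k}=-t$ triggers a cascade that drags the whole chain $E_{1,k}-E_{2,k}-\cdots-E_{k-1,k}$ of $(-2)$-curves into $N_t$; solving the orthogonality system on this chain gives the first-regime formulas
$$
N_t=\frac{t}{k}\sum_{i=1}^{k-1} i\,E_{i,k},\qquad P_t=D-\frac{t}{k}E,
$$
valid as long as $P_t$ remains nef. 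Using that $C$ is nodal one computes $C_k\cdot E_{1,k}=1$ at the unblown branch of the node, and $C_k\cdot E_k=1$ at the flag centre, so $E\cdot C_k=k+1$ and $P_t\cdot C_k=D\cdot C-t(k+1)/k$; hence the first candidate wall coming from a non-exceptional curve is when $C_k$ enters $N_t$, at $t_1=k(D\cdot C)/(k+1)$.

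\emph{Two vertices at the wall $t_1$.} For $\varepsilon>0$ small the support of $N_{t_1+\varepsilon}$ equals the configuration $\{C_k,E_{1,k},\dots,E_{k-1,k}\}$, which is \emph{connected}: namely the path $C_k-E_{1,k}-E_{2,k}-\cdots-E_{k-1,k}$, using $C_k\cdot E_{1,k}=1$. This single connected component meets $E_k=E_{k,k}$ at the two distinct points $p_k=C_k\cap E_k$ (the centre of the flag, reached through $C_k$) and $q=E_{k-1,k}\cap E_k$ (reached through the far end of the chain, away from the flag centre). By parts (4) and (5) of Theorem~\ref{pro:KLM} respectively, both $\alpha$ and $\beta$ are then non-differentiable at $t=t_1$. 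The corresponding two distinct vertices of $\Delta_{\Y}(D)$ have first coordinate $t_1$, are vertically separated by $\beta(t_1)-\alpha(t_1)=E_k\cdot P_{t_1}=t_1/k>0$, and are interior if $0<t_1<\mu$.

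\emph{Verifying $t_1<\mu$ — the hard part.} The lower bound $t_1>0$ is immediate from $D\cdot C>0$ (since $C\notin\Null(D)$), so the task is to show $t_1<\mu$ using the numerical hypothesis $k+1>(D\cdot C)/\sqrt{D^2}$. Since $P_{t_1}=D-(t_1/k)E$ is orthogonal by construction to the whole configuration $\mathscr{N}=\{E_{1,k},\dots,E_{k-1,k},C_k\}$, it coincides with the positive part of the relative Zariski decomposition of $D_{t_1}$ with respect to $\mathscr{N}$; by Lemma~\ref{lem:relzariski-p-positive-then-big} matters reduce to checking $P_{t_1}^2>0$, which a direct computation rewrites as the inequality $(k+1)^2D^2>k(D\cdot C)^2$, and to the negative-definiteness of $\mathscr{N}$, which via a determinant computation is equivalent to $(k+1)^2>kC^2$. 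Both are to be established from the stated hypothesis together with the Hodge index inequality $(D\cdot C)^2\ge D^2\,C^2$. This is the delicate numerical step; the main obstacle I anticipate is not so much this numerical check, but excluding (or controlling) the possibility that some other irreducible curve on $S$ with special infinitesimal tangencies along the cluster $\{p,p_1,\dots,p_{k-1}\}$ enters $N_t$ before $C_k$ does, thereby changing the first wall-crossing and potentially spoiling the two-point-contact picture that produces the two vertices.
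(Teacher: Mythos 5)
Your overall strategy matches the paper's: exploit the cascade along the chain of exceptional curves, use a relative Zariski decomposition to locate where $C_k$ enters the negative part, and then read off two non-differentiability points of $\alpha$ and $\beta$ from Theorem~\ref{pro:KLM}(4)--(5) via the two distinct contact points $p_k=C_k\cap E_k$ and $E_{k-1,k}\cap E_k$. The final pieces — $\nu=0$, the fact that the chain $E_{1,k},\dots,E_{k-1,k}$ is always in $N_t$ for $t>0$, and the use of Lemma~\ref{lem:relzariski-p-positive-then-big} to bound $\mu$ from below — are also as in the paper.

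The gap you flag at the end is, however, a real one for the route you took, and the paper circumvents it rather than confronts it. You commit to the \emph{exact} first-regime formula $N_t=\frac{t}{k}\sum i\,E_{i,k}$ up to $t_1=kD\cdot C/(k+1)$, which would indeed have to be defended against curves with special infinitesimal tangencies along the cluster $\{p,p_1,\dots,p_{k-1}\}$ entering $N_t$ earlier. The paper never claims to know the exact $N_t$ or the exact wall: it uses Lemma~\ref{lem:relzariski-vs-zariski-decomp}, namely $N_{t,\mathscr{N}}\le N_t$ unconditionally. From this, if $C_k$ is \emph{not} a component of $N_t$, then $N_t-N_{t,\mathscr{N}}$ is effective without $C_k$ in its support, so
\[
P_{t,\mathscr{N}}\cdot C_k \;=\; P_t\cdot C_k + \bigl(N_t - N_{t,\mathscr{N}}\bigr)\cdot C_k \;\ge\; 0,
\]
which forces $t\le kD\cdot C/(k+1)$. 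Contrapositively, for any $t>kD\cdot C/(k+1)$ with $D_t$ pseudo-effective, $C_k$ \emph{must} belong to $N_t$ — regardless of what other curves may have cascaded in. Combined with continuity of the Zariski decomposition (which gives $C_k\notin N_t$ for $t$ near $0$, since $D\cdot C_k>0$), this yields a threshold $t_{C_k}\in(0,\mu)$ without ever computing $N_t$ explicitly. The two-vertex step then applies at $t_{C_k}$ rather than at your $t_1$: the chain $E_{1,k},\dots,E_{k-1,k}$ is in $N_t$ for all $t>0$, and any additional acquired curve can only enlarge the connected component containing $C_k$, never remove its two contact points with $E_k$. So the phenomenon you worried about cannot spoil the picture; the right move is to drop the exact-formula claim and use the relative decomposition only as a lower bound on $N_t$.
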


Note that choosing $p_k'=E_k\cap E_{k-1,k}$ would lead to the same conclusion.

\begin{proof}
The determination of $\Delta_{\Y}(D)$ relies on the understanding of the Zariski decomposition of \(D_t=D-t E_{k}\), which we denote by \(D_t=N_t+P_t\).

Clearly, the divisors $E_{i,k}$ for $i=1,\dots, k-1$ belong to $N_t$ as soon as $t>0$. In fact, for $\mathscr{N}=\{E_1,\dots, E_{k-1}\}$, it is easy to check that the relative Zariski decomposition $D_t=P_{t,\mathscr{N}}+N_{t,\mathscr{N}}$ of $D_t$ with respect to $\mathscr{N}$ has negative part $N_{t,\mathscr{N}}=\frac{t}{k}E - tE_k$, and we know (by Lemma \ref{lem:relzariski-vs-zariski-decomp}) that $N_t\ge N_{t,\mathscr{N}}$. 
Therefore, for every \(t>0\) such that \(D_t\) is pseudo-effective, either \(C_k\) is a component of \(N_t\) or $C_k\cdot P_{t,\mathscr{N}}$ is nonnegative, which gives
\[0\le C_k\cdot \left( D_t-\left(\frac{t}{k}E - tE_k\right)\right)=(C-E-E_1)\cdot (D-\frac{t}{k}E)=D\cdot C - t\frac{k+1}{k}\ .\]
In other words, for $t>\frac{kD\cdot C}{k+1}$ such that $D_t$ is pseudoeffective, $C_k$ belongs to $N_t$. 

On the other hand,  by Lemma \ref{lem:relzariski-p-positive-then-big} we know that $D_t$ is big whenever 
$$
0<P_{t,\mathscr{N}}^2=\Big( D_t-\Big(\frac{t}{k}E - tE_k\Big)\Big)^2=\Big (D-\frac{t}{k}E\Big )^2=D^2-t^2/k.
$$ 
The hypothesis $k>(D\cdot C)/\sqrt{D^2}-1$ guarantees the existence of a $t>\frac{kD\cdot C}{k+1}$ such that $D^2-t^2/k>0$, i.e., with $D_t$ big and therefore pseudoeffective. 
Hence for $t$ large enough, $C_k$ does belong to $N_t$, whereas for $t$ close to zero $C_k$ does not belong to $N_t$, because $D_0\cdot C_k=D\cdot C_k>0$.
Thus there is a value $t_{C_k}$ (less than or equal to $\frac{kD\cdot C}{k+1}$) such that $C_k$ belongs to $N_t$ if and only if $t>t_{C_k}$, hence there are interior vertices on $\Delta_{\Y}(D)$ with abscissa $t_{C_k}$. 
Since $C_k$ meets the configuration $\mathscr{N}$ at the component $E_{1,k}$, $N_t$ is connected, so the connected component of $N_t$ to which $C_k$ belongs is simply $N_t$, and it  intersects $E_k$ at two points (at least), namely $C_k \cap E_k$ and $E_{k-1}\cap E_k$. 
The first point is equal to $p$ so $C_k$ is contained in the connected component of $N_{t}$ that passes through $p$ and by statement \ref{item4}  of Theorem \ref{pro:KLM} the function $\alpha(t)$ has a point of non-differentiability at $t=t_0$, i.e. there is an internal lower vertex at $t=t_0$.
The second intersection point $E_{k-1}\cap E_k$ of $N_{t}$ with $E_k$ is different from $p$, so the connected component containing $C_k$  intersects $E_k$ away from $p$ and by statement \ref{item5}  of Theorem \ref{pro:KLM} the function $\beta(t)$ has a point of non-differentiability at $t=t_0$ and there is an internal upper vertex at $t=t_0$. 
Thus $\Delta_{\Y}(D)$ has two internal vertices with abscissa $t_{C_k}$.
\end{proof}


\begin{theorem}\label{thm:characterize-picard-1}
	For every ample divisor $D$ on a normal projective surface $S$ there exist rank 2 valuations $v$ such that $\Delta_{v}(D)$ has at least four vertices. If $\rho(S)>1$, then there exist rank 2 valuations $v$ such that $\Delta_{v}(D)$ has more than four vertices.
\end{theorem}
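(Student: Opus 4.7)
The plan is to obtain both statements as consequences of Proposition~\ref{pro:nodal-curve-two-internal}, applied to carefully chosen nodal curves on a resolution of singularities. First I would reduce to the smooth case: for any resolution $\pi:\tilde{S}\to S$ one has $\Delta_v(D)=\Delta_v(\pi^*D)$ for every rank $2$ valuation $v$, and $\pi^*D$ is big and nef on $\tilde{S}$, so we may replace $S$ by $\tilde{S}$ and assume $S$ is smooth. For the first statement, for $m$ sufficiently divisible $|mD|$ is very ample, and a standard Bertini-plus-discriminant argument yields an irreducible nodal curve $C\in|mD|$. Since $D$ is nef and $C\cdot D>0$ we have $C\notin\Null(D)$, so Proposition~\ref{pro:nodal-curve-two-internal} with $k>(D\cdot C)/\sqrt{D^2}-1$ produces an infinitesimal flag $\Y$ whose body has at least two interior vertices. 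Together with at least one leftmost and one rightmost vertex this already gives at least $1+2+1=4$ vertices.

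For the converse, when $\rho(S)>1$, the strategy is to promote the unique rightmost vertex of the polygon obtained above into a genuine edge, producing two rightmost vertices and a total of at least five. Since $\rho(S)>1$, the ample cone contains a class $A$ not proportional to $[D]$; for $m$ large there is an irreducible nodal curve $C\in|mA|$ whose numerical class is linearly independent from $[D]$ in $NS(S)_{\mathbb{R}}$. Running the tower $S_k\to S$ of Proposition~\ref{pro:nodal-curve-two-internal} along this $C$, with the same infinitesimal flag $\Y=\{S_k\supset E_k\supset p_k\}$, still yields two interior vertices.

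The crux is to show that the positive part $P_\mu$ of the Zariski decomposition of $D-\mu E_k$ on $S_k$ satisfies $P_\mu\cdot E_k>0$. The negative part $N_\mu$ contains the $k$ linearly independent classes $E_{1,k},\dots,E_{k-1,k},C_k$, so $P_\mu$ lies in their orthogonal complement in $NS(S_k)_{\mathbb{R}}$, a subspace of dimension $\rho(S_k)-k=\rho(S)$. By the Hodge Index Theorem the intersection form on this complement has signature $(1,\rho(S)-1)$, and for $\rho(S)\ge 2$ it admits nonzero nef isotropic vectors. The plan is to solve the orthogonality system $P_\mu\cdot E_{i,k}=0$ and $P_\mu\cdot C_k=0$ in a basis consisting of pullbacks from $S$ together with the chain components $E_{i,k}$, and to check that the linear independence of $[C]$ and $[D]$ forces $P_\mu$ to have a nonzero component transverse to $\pi^*D$ at the critical value $\mu$, so that $P_\mu\ne 0$; a direct intersection calculation along the dual graph of the exceptional chain then yields $P_\mu\cdot E_k>0$, hence a fifth vertex.

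The main obstacle is this last step. The existence of nontrivial nef isotropic classes in the relevant subspace is guaranteed by the signature, but one must verify that the trajectory $\mu\mapsto P_\mu$ cut out by the Zariski decomposition actually lands on such a class (and not on the zero class), and does so with $P_\mu\cdot E_k>0$. This is precisely where the hypothesis $\rho(S)>1$ together with the linear independence of $[C]$ and $[D]$ is crucial: for $\rho(S)=1$ the orthogonal complement is one-dimensional and generated by $\pi^*D$, so the trajectory is trapped in the $\pi^*D$-direction and collapses to $P_\mu=0$, yielding only the four vertices of Part~1. The linear independence of $[C]$ and $[D]$ is the geometric input that rules out this collapse.
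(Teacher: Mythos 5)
Your first half matches the paper and is correct: resolve singularities, take a nodal $C$ with $C\notin\Null(D)$, and Proposition~\ref{pro:nodal-curve-two-internal} yields at least two interior vertices and hence at least four vertices total.

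For the second half, your high-level target is the same as the paper's (show that the rightmost vertex splits in two, i.e.\ $P_\mu\cdot E_k>0$), but the argument you sketch has a genuine gap, which you yourself flag as ``the main obstacle.'' Knowing that $P_\mu$ lies in the orthogonal complement of $\{E_{1,k},\dots,E_{k-1,k},C_k\}$, a subspace of signature $(1,\rho(S)-1)$, tells you nothing by itself: $P_\mu$ is nef with $P_\mu^2=0$ on the boundary of the big cone, and both $P_\mu=0$ and $P_\mu\neq 0$ with $P_\mu\cdot E_k=0$ are a priori compatible with that signature. The assertion that ``linear independence of $[C]$ and $[D]$ rules out the collapse'' is a hope, not a proof; you never derive $P_\mu\cdot E_k>0$. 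Moreover, your argument silently assumes you can identify the support of $N_\mu$. In fact $N_\mu$ could contain additional strict transforms of curves from $S$, and without controlling them you cannot compute (or even constrain) $P_\mu$ along the dual graph as you propose. The paper handles both issues by running a proof by contradiction: assuming exactly four vertices, Lemma~\ref{lem:no-additional-curve-large-k} shows $N_t$ is supported exactly on $\mathscr{N}=\{E_{1,k},\dots,E_{k-1,k},C_k\}$ (this is precisely where the ampleness of $D-\frac{D\cdot C}{k+1}C$ enters), so $P_t$ equals the relative Zariski positive part $P_{t,\mathscr{N}}$; the hypothesis of a unique rightmost vertex then gives $P_\mu\cdot E_k=0$, pinning down $\mu=k(D\cdot C)/C^2$, and an explicit computation yields $P_\mu^2=\frac{1}{C^2}\det\begin{pmatrix}D^2 & D\cdot C\\ D\cdot C & C^2\end{pmatrix}<0$ by the Index Theorem (using $[C]\not\sim_{\mathbb Q}[D]$), contradicting nefness. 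To complete your write-up along the lines you propose, you would need to supply both the Lemma~\ref{lem:no-additional-curve-large-k}-type control on the negative part and the explicit intersection computation; as written the proposal stops short of the proof.
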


\begin{corollary}
	Among all projective smooth surfaces $S$, those with Picard number 1 are characterized by the fact that for every big divisor $D$ and every rank 2 valuation $v$ of $K(S)$, the Newton--Okounkov polygon $\Delta_{v}(D)$ has at most 4 vertices.
\end{corollary}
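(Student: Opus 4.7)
The plan is that this corollary is essentially immediate from the two main results already established in the paper: the sharp upper bound (Theorem \ref{upperbound-sharp-intro}) handles one direction of the equivalence, and the existence part of Theorem \ref{thm:characterize-picard-1} handles the other. I would split the argument along these two implications.

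For the forward implication, I would assume $\rho(S)=1$ and fix any big divisor $D$ on $S$. By Remark \ref{rem:null-defneg}, one always has $1 \le \rho_D(S) \le \rho(S)$, and under our hypothesis this forces $\rho_D(S)=1$. Applying Theorem \ref{upperbound-sharp-intro}, for every rank 2 valuation $v$ on $K(S)$ the polygon $\Delta_v(D)$ has at most $2\rho_D(S)+2 = 4$ vertices, which is exactly the stated bound.

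For the reverse implication, I would argue the contrapositive: assume $\rho(S)>1$, and produce a big divisor $D$ and a rank 2 valuation $v$ on $K(S)$ with $\Delta_v(D)$ having more than four vertices. Since $S$ is a smooth projective surface, it carries an ample divisor $D$, which is in particular big. The second assertion of Theorem \ref{thm:characterize-picard-1} (applied with this ample $D$) then supplies a rank 2 valuation $v$ of $K(S)$ such that $\Delta_v(D)$ has strictly more than four vertices. This shows that the property ``for every big $D$ and every rank 2 valuation $v$, $\Delta_v(D)$ has at most four vertices'' fails on $S$, completing the characterization.

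There is no genuine obstacle at this final step: the hard work has already been done in establishing the two ingredients. The bound of four vertices is extracted from the sharp upper bound of Section \ref{sec:upper-bound}, whose key new input was Lemma \ref{lem:pullback} (curves orthogonal to $D$ do not contribute vertices). The existence of valuations producing at least five vertices when $\rho(S)>1$ is the deeper content, relying on the infinitesimal flag construction of Section \ref{sec:existence} and in particular the nodal-curve argument of Proposition \ref{pro:nodal-curve-two-internal}. Once both ingredients are in hand, the characterization of Picard number one surfaces follows by the short two-way argument above.
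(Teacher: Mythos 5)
Your proposal is correct and follows exactly the route the paper intends: the corollary is left as an immediate consequence of Theorem \ref{upperbound-sharp-intro} (with $\rho_D(S)=\rho(S)=1$ via Remark \ref{rem:null-defneg}) for the bound of four vertices, and of the second assertion of Theorem \ref{thm:characterize-picard-1} applied to an ample divisor for the converse. Nothing further is needed.
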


For the proof of Theorem \ref{thm:characterize-picard-1} we need the following lemma.

\begin{lemma}\label{lem:no-additional-curve-large-k}
	Let $D$ be an ample divisor and $C$ a nodal curve on the surface $S$. 
	Let $k$ be an integer such that $k>(D\cdot C)/\sqrt{D^2}-1$ and $D-\frac{D\cdot C}{k+1}C$ is ample.
	Consider the construction above of surfaces $S_k$ with infinitesimal flags $\Y=\{S_k\supset E_k\supset \{p_k\}\}$, and suppose that $\Delta_{\Y}(D)$ has exactly four vertices.
	Then for every $t$ such that $D_t=D-tE_k$ is pseudoeffective, the negative part $N_t$ in the Zariski decomposition $D_t=P_t+N_t$ is supported on $\mathscr{N}=\{E_{1,k},\dots,E_{k-1,k}, C_k\}$.
\end{lemma}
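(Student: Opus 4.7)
The plan is to leverage the rigid vertex count of four to pin down the support of $N_t$ at each $t$, and then to use the ample hypothesis to rule out any curve outside $\mathscr{N}$.

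First I would count vertices. Since $D$ on $S_k$ is the pullback of an ample class from $S$ and $E_k=E_{k,k}$ is exceptional, $D\cdot E_k=0$; combined with $D$ being nef on $S_k$, this yields $\nu=0$ and $\alpha(0)=\beta(0)=0$, so the polygon has a single leftmost vertex at the origin. Proposition \ref{pro:nodal-curve-two-internal} supplies two interior vertices with abscissa $t_{C_k}$, and any polygon has at least one rightmost vertex; the hypothesis that $\Delta_{\Y}(D)$ has exactly four vertices therefore forces the decomposition $1+2+1$, excluding in particular any interior vertex outside $t_{C_k}$. By Theorem \ref{pro:KLM}, the support of $N_t$ is then constant on each of the open intervals $(0,t_{C_k})$ and $(t_{C_k},\mu)$.

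For the left interval I would use the relative Zariski decomposition with respect to $\mathscr{N}'=\{E_{1,k},\dots,E_{k-1,k}\}$ already computed in the proof of Proposition \ref{pro:nodal-curve-two-internal}: $N_{t,\mathscr{N}'}=\tfrac{t}{k}E-tE_k$ is effective and supported on $\mathscr{N}'\subset\mathscr{N}$, with positive part $P_{t,\mathscr{N}'}=D-\tfrac{t}{k}E$. A direct check on intersections, using the multiplicity bound $C\cdot F_0\ge m_k$ (derived below) and the ampleness of $D-\tfrac{D\cdot C}{k+1}C$, shows that $P_{t,\mathscr{N}'}$ is nef throughout $[0,t_{C_k}]$; hence it is the actual Zariski decomposition and $\mathrm{supp}(N_t)=\mathscr{N}'\subset\mathscr{N}$ on $(0,t_{C_k}]$. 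The endpoints $t=0$ and $t=t_{C_k}$ are handled by the continuity of Zariski decomposition (Proposition \ref{pro:continuity-zariski}).

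For the right interval I would argue by contradiction: suppose some irreducible curve $F\notin\mathscr{N}$ lies in $\mathrm{supp}(N_{t^\ast})$ for some $t^\ast\in(t_{C_k},\mu]$. Constancy of the support forces $F$ to enter exactly at $t_{C_k}$, so by continuity and the left-limit $P_{t_{C_k}}=D-\tfrac{D\cdot C}{k+1}E$ computed in the previous step,
\[
D\cdot F \;=\; \tfrac{D\cdot C}{k+1}\,(E\cdot F).
\]
The case $F=E_k$ is ruled out immediately, since $E\cdot E_k=-1$ forces $P_{t_{C_k}}\cdot E_k=t_{C_k}/k>0$. Otherwise $F$ is the strict transform of an irreducible curve $F_0\ne C$ on $S$, and an exceptional-class computation gives $E\cdot F=m_k$, where $m_k$ is the multiplicity at $p_{k-1}$ of the iterated strict transform of $F_0$. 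The displayed equality becomes $D\cdot F_0=\tfrac{D\cdot C}{k+1}\,m_k$, while the ample hypothesis yields $D\cdot F_0>\tfrac{D\cdot C}{k+1}\,(C\cdot F_0)$, hence $m_k>C\cdot F_0$. On the other hand, iterating the projection formula through the blowup chain produces $C\cdot F_0=C_k\cdot F+2m_1+m_2+\dots+m_k\ge m_k$, contradicting the previous inequality; the degenerate case $m_k=0$ is excluded because it would force $D\cdot F_0=0$, contradicting ampleness of $D$ on $S$.

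The main obstacle is that a curve $F$ entering simultaneously with $C_k$ at $t_{C_k}$ need not increase the vertex count (it could belong to the same connected component of $N_{t_{C_k}+\varepsilon}$ as $C_k$), so vertex counting alone does not suffice on the right interval; the ampleness of $D-\tfrac{D\cdot C}{k+1}C$ together with the iterated-blowup multiplicity identity are precisely the tools that yield the required contradiction.
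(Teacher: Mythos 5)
Your overall strategy coincides with the paper's: force any hypothetical extra negative curve $F$ to enter $N_t$ precisely at $t_{C_k}$, use the resulting orthogonality $P_{t_{C_k}}\cdot F=0$ together with the explicit left-limit $P_{t_{C_k}}=D-\tfrac{D\cdot C}{k+1}E$, and then contradict the ampleness of $D-\tfrac{D\cdot C}{k+1}C$. This is the paper's proof. There is, however, a concrete computational error in the intersection number that your argument hinges on. You assert $E\cdot F=m_k$, the multiplicity of the iterated strict transform at the last blown-up point $p_{k-1}$. Writing the strict transform as $F=F_0-\sum_{i=1}^k m_i E_i$ in terms of the pullbacks $E_i$ (with $m_i$ the multiplicity of $F_{i-1}$ at $p_{i-1}$), and recalling $E=\sum_{i=1}^k E_i$ with $E\cdot E_i=E_i^2=-1$, one gets $E\cdot F=\sum_{i=1}^k m_i$, not $m_k$. (You are likely computing $E_k\cdot F=m_k$, which is a different divisor.) Consequently the equation $P_{t_{C_k}}\cdot F=0$ gives $D\cdot F_0=\tfrac{D\cdot C}{k+1}\sum_i m_i$; ampleness then yields $\sum_i m_i > C\cdot F_0$, and your own projection-formula identity $C\cdot F_0=C_k\cdot F+2m_1+\sum_{i\ge 2}m_i\ge \sum_i m_i$ delivers the contradiction, exactly as in the paper. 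As written, the inequality $m_k>C\cdot F_0$ simply does not follow from $P_{t_{C_k}}\cdot F=0$, so the proof is incorrect without this correction, even though the final contradiction happens to survive.

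Two smaller remarks. First, the "direct check" that $P_{t,\mathscr{N}'}=D-\tfrac{t}{k}E$ is nef on $[0,t_{C_k}]$ also relies on the same erroneous intersection number (the inequality $(D-\tfrac{t}{k}E)\cdot F\ge 0$ involves $E\cdot F$), and is in any case unnecessary: the four-vertex hypothesis already forces every curve outside $\mathscr{N}'$ to first enter $N_t$ at $t_{C_k}$, which identifies the Zariski decomposition with the relative one on $[0,t_{C_k})$ and supplies the left limit $P_{t_{C_k}}$ by continuity, with no separate nefness check. Second, your explicit exclusion of $F=E_k$ via $P_{t_{C_k}}\cdot E_k=t_{C_k}/k>0$ is a valid observation that the paper passes over silently; this part of your argument is correct.
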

\begin{proof}
	We argue by contradiction. Assume that there is an irreducible component $Z_k$ of $N_\mu$ not in $\mathscr{N}$.
	In particular, since $D$ is ample, $Z_k$ is the strict transform in $S_k$ of a curve $Z$ in $S$. Let us denote by $Z_i$ the strict transform of $Z$ in the sequence of blowups \(S_i \rightarrow S_{i-1} \rightarrow \dots \rightarrow S_1 \rightarrow S_0=S\). 
	
	By the results above, the four vertices of $\Delta_{\Y}(D)$ must be one leftmost vertex, one rightmost vertex, and two vertices with abscissa $t_C=\inf\{t : C_k \text{ is a component of } N_t\}$. 
	Since $D_0\cdot Z_k=D\cdot Z>0$ because $D$ is ample, $Z_k$ does not belong to $N_t$ for $t$ near zero. Therefore $\inf\{t : Z_k \text{ is a component of } N_t\}=t_C$ (otherwise $Z_k$ would create some additional vertex on $\Delta_{\Y}(D)$ with abscissa different from $0, t_C. \mu$), and in particular $P_{t_C}\cdot Z_k=P_{t_C}\cdot C_k=0$.
	Moreover, for every $t<t_C$, the Zariski decomposition $D_t=P_t+N_t$ agrees with the Zariski decomposition of $D_t$ relative to $\mathscr{N}\setminus\{C_k\}$ computed above, and in particular
	\begin{equation*}
	P_{t}=D-\frac{t}{k}E\qquad \mbox{for all} \ 0\le t \le t_C\ .
	\end{equation*}
	The equality $P_{t_C}\cdot C_k=0$ now allows us to determine $t_C=\frac{kD\cdot C}{k+1}$, which in turn implies a strong restriction on $Z_k$: let us denote by $m_{Z,i}$ the multiplicity at $p_i$ of the strict transform $Z_i$ of $Z$ in $S_i$.
	Then we have
	\begin{equation}\label{eq:same-t}
	0=P_{t_C}\cdot Z_k=\left(D-\frac{\frac{kD\cdot C}{k+1}}{k}E\right)\cdot\left(Z-\sum_i m_{Z,i}E_i\right)
	= D\cdot Z -\frac{D\cdot C}{k+1}\sum_i m_{Z,i},
	\end{equation}
	and on the other hand $0\le C_k\cdot C'=C_k\cdot Z_k=C\cdot Z-2 m_{Z,1}-\sum_{i=2}^k m_{Z,i} $; therefore we deduce that $\sum_{i=1}^k m_{Z,i} \le C\cdot Z$, and equation \eqref{eq:same-t} yields
	\begin{equation*}
	0\ge D\cdot Z-\frac{D\cdot C}{k+1}C\cdot Z=\left(D-\frac{D\cdot C}{k+1}C\right)\cdot Z.
	\end{equation*}
	But this is a contradiction, because $D-\frac{D\cdot C}{k+1}C$ is ample.	
\end{proof}
\begin{proof}[Proof of Theorem \ref{thm:characterize-picard-1}]
	The Newton--Okounkov bodies $\Delta_{\Y}(D)$ constructed in Proposition \ref{pro:nodal-curve-two-internal} have at least two internal vertices, a rightmost and a leftmost vertex, so they have at least four vertices.
	
	Now assume $\rho(S)>1$, and choose a nodal curve $C$ with $C^2>0$ whose numerical equivalence class is not a rational multiple of $[D]$.	
	We will show that in the construction above of an infinitesimal flag $\Y=\{S_k\supset E_k\supset \{p_k\}\}$, it is possible to choose $k$ such that the Newton--Okounkov body $\Delta_{\Y}(D)$ has five or more vertices. 

	We argue by contradiction. Assume that for every $k\ge (D\cdot C)/\sqrt{D^2}-1$, the polygon $\Delta_{\Y}(D)$ has exactly four vertices, namely one leftmost vertex, one rightmost vertex, and two vertices with abscissa $t_C=\inf\{t : C_k \text{ is a component of } N_t\}$. 
	By Lemma \ref{lem:no-additional-curve-large-k} this implies that, if $k$ is large enough, 
	then for all $t$ such that $D_t=D-tE_k$ is pseudoeffective, the negative part $N_t$ in the Zariski decomposition $D_t=P_t+N_t$ is supported on $\mathscr{N}=\{E_{1,k},\dots,E_{k-1,k}, C_k\}$, and hence $P_t$ can be computed as $P_{t,\mathscr{N}}$. 
	This is a straightforward (if somewhat cumbersome) linear algebra computation leading to
	\begin{equation}\label{eq:pmu}
	P_\mu=D-\frac{(k+1) \mu-k (D\cdot C)}{k^2-k(C^2-1)}C-\frac{\mu C^2-k(D\cdot C)}{k^2-k(C^2-1)}E.
	\end{equation}
	The assumption that there is a unique rightmost vertex implies that $P_\mu \cdot E_k=0$. The facts that $D\cdot E_k=C\cdot E_k=0$, and $E\cdot E_k=-1$, setting $P_\mu\cdot E_k=0$ imply $\mu=k(D\cdot C)/C^2$. 
	Substituting this value in \eqref{eq:pmu} and taking into account the intersection numbers between $D, C$ and $E$, one gets
	\[P_\mu^2=\frac{1}{C^2}\det\begin{pmatrix}D^2 & D\cdot C \\ D\cdot C & C^2 \end{pmatrix},\] 
	which is negative by the Index Theorem. 
	This is a contradiction, because $P_\mu$ is nef by definition.
\end{proof}

\begin{conjecture}
	Let $S$ be a smooth projective algebraic surface, and $D$ a big divisor on $S$. 
	There is a smooth projective surface $\tilde{S}$ with a proper projective morphism $\pi:\tilde{S}\rightarrow S$ and a flag $\Y=\{\tilde{S}\supset E \supset \{p\}\}$ such that $\Delta_{\Y}(D)$ has exactly $2\rho +2$ vertices.
\end{conjecture}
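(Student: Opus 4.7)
The plan is to globalize the single-curve construction of Proposition \ref{pro:nodal-curve-two-internal} to a maximal negative-definite configuration, producing an infinitesimal flag on a suitable blowup of $S$ that forces every allowed internal vertex to appear on both boundaries of the Newton--Okounkov polygon.

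First I would reduce to the case $D$ ample (replacing $D$ with its positive part $P_D$ and passing to a resolution of singularities if necessary), so that $\rho_D(S)=\rho(S)=:\rho$. The case $\rho=1$ is already Theorem \ref{thm:characterize-picard-1}, so I assume $\rho\ge 2$. By the Index Theorem, fix an irreducible configuration $\mathscr{N}=(C_1,\dots,C_{\rho-1})$ with negative definite intersection matrix. Applying Lemma \ref{ordered-negative}, I obtain an irreducible curve $C$ that meets each $C_i$ transversely in at least two points and such that in the Zariski decomposition of $D-tC$ the curves $C_i$ become components of the negative part at strictly increasing threshold values. By the construction of $C$ inside $|mP-\Fix(mP)|$ and Bertini's theorem, I may further require $C$ to be nodal at a point $p\in C\setminus(C_1\cup\dots\cup C_{\rho-1})$.

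Next, I apply the iterated-blowup construction of Section \ref{sec:existence}: let $\pi:S_k\to S$ be the chain of $k$ blowups starting at the node $p$ and continuing along one branch of $C$, producing exceptional divisors $E_{1,k},\dots,E_{k,k}$, and set $\Y=\{S_k\supset E_k\supset\{p_k\}\}$ with $p_k=C_k\cap E_k$. For $k$ sufficiently large the goal is to show that the Zariski decomposition of $D_t=\pi^{*}(D)-tE_k$ has negative part supported exactly on $\{E_{1,k},\dots,E_{k-1,k},C_k,\tilde C_1,\dots,\tilde C_{\rho-1}\}$, with $C_k$ and the $\tilde C_i$ entering at strictly increasing thresholds $t_{C_k}<t_{\tilde C_1}<\dots<t_{\tilde C_{\rho-1}}$ inside the pseudo-effective range. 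Since $C_k$ meets $E_k$ at $p_k$ while the chain $E_{1,k},\dots,E_{k-1,k}$ meets $E_k$ at $E_{k-1,k}\cap E_k$, and since each $\tilde C_i$ is connected to $C_k$ through the intersection points of $C\cap C_i$, the connected component of $N_t$ containing each $\tilde C_i$ meets $E_k$ at two distinct points, so Theorem \ref{pro:KLM} produces two internal vertices per curve, for a total of $2\rho$ internal vertices. The flag being infinitesimal forces $\pi^{*}(D)\cdot E_k=0$ and hence a single leftmost vertex; the configuration $\{E_{1,k},\dots,E_{k-1,k},C_k,\tilde C_1,\dots,\tilde C_{\rho-1}\}$ has exactly $\rho(S_k)-1$ elements, so the Index Theorem forces $P_\mu=0$ and there is a single rightmost vertex. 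The total count is $1+2\rho+1=2\rho+2$.

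The main obstacle is a rigorous verification of the Zariski-decomposition behaviour claimed for $\pi^{*}(D)-tE_k$: ruling out spurious irreducible components in $N_t$, and proving strict monotonicity of the thresholds $t_{C_k}<t_{\tilde C_1}<\dots<t_{\tilde C_{\rho-1}}$. In the single-curve setting, Lemma \ref{lem:no-additional-curve-large-k} accomplishes this using the ampleness of $D-\tfrac{D\cdot C}{k+1}C$; a multi-curve analogue seems to require the ampleness of a whole family of intermediate combinations of $D$ and the $C_i$, together with a quantitative comparison between $N_t$ and the relative Zariski decompositions of $\pi^{*}(D)-tE_k$ with respect to every compatible subconfiguration of $\{E_{1,k},\dots,E_{k-1,k},C_k,\tilde C_1,\dots,\tilde C_{\rho-1}\}$, via Lemma \ref{lem:relzariski-vs-zariski-decomp}. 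This delicate control over simultaneous chamber crossings is presumably the obstruction that has kept the conjecture open.
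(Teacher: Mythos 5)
This statement is a \emph{conjecture} in the paper; the authors give no proof and explicitly write that ``it is a challenge to construct appropriate blowups $\pi:\tilde{S}\rightarrow S$ where one can guarantee the maximal number of internal vertices.'' So there is no paper proof against which to compare your argument, and what you have written is not a proof either: you yourself concede in the final paragraph that the crucial Zariski-decomposition control ``is presumably the obstruction that has kept the conjecture open.'' In other words, your proposal is a description of the natural attack---very likely the one the authors already have in mind---together with an honest acknowledgment that the attack does not go through.

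To make the gap concrete: Lemma \ref{ordered-negative} cannot be used where you invoke it. That lemma gives you the freedom to \emph{choose} the flag divisor $C$ (as an irreducible member of $|mP-\Fix(mP)|$ for a carefully tuned $P$), and it is precisely this freedom that lets you perturb $P$ step by step to separate the thresholds. In your construction, however, the flag divisor is $E_k$, which is rigidly dictated by the chain of blowups determined by the node of $C$; you have no analogous parameter to tune. Consequently there is no reason for the thresholds $t_{C_k},t_{\tilde C_1},\dots,t_{\tilde C_{\rho-1}}$ for the family $D-tE_k$ on $S_k$ to be finite, to lie in the pseudoeffective range, or to be pairwise distinct. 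Worse, the ordering is not merely a bookkeeping convenience: since $p$ (and all of its infinitely near points) lies off every $C_i$, the strict transform $\tilde C_i$ is just the pullback $\pi^{*}C_i$ and meets none of the exceptional divisors. So if $t_{\tilde C_i}\le t_{C_k}$, the curve $\tilde C_i$ enters $N_t$ in a connected component disjoint from $E_k$, and by Theorem \ref{pro:KLM} it contributes \emph{no} vertex at all. Establishing $t_{C_k}<t_{\tilde C_1}<\dots<t_{\tilde C_{\rho-1}}$ with all values strictly below $\mu$ is therefore essential, and Lemma \ref{lem:no-additional-curve-large-k} has no multi-curve analogue in the paper; its proof depends on having only a single negative curve $C_k$ that makes the intersection-number bookkeeping manageable.

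Two smaller issues. First, replacing $D$ by $P_D$ yields a big and nef divisor, not an ample one, so the reduction to $D$ ample and $\rho_D(S)=\rho(S)$ does not go through; if $D$ is genuinely non-ample the bound of Theorem \ref{thm:bound} is $2\rho_D(S)+2<2\rho(S)+2$, so the conjecture as literally stated with $\rho=\rho(S)$ would be false, and one must read $\rho=\rho_D(S)$ (as the paper's other conjecture suggests by restricting to ample $D$). Second, the fact that $\{E_{1,k},\dots,E_{k-1,k},C_k,\tilde C_1,\dots,\tilde C_{\rho-1}\}$ has exactly $\rho(S_k)-1$ members does not by itself force these curves into $N_\mu$; you still need to show they actually appear, which is part of the same unresolved control.

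In short: your outline correctly reproduces the heuristic picture, identifies the right candidate flag, and correctly locates the obstruction, but it does not close it, and closing it is exactly the open problem.
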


Note that the divisorial part $E$ of the flag belongs necessarily to $\Null(D)$. 
The use of a different model $\tilde{S}$ provides ``new'' curves on $\Null(D)$ to use in the flag, as was done above to prove Theorem \ref{thm:characterize-picard-1}, but it is a challenge to construct appropriate blowups $\pi:\tilde{S}\rightarrow S$ where one can guarantee the maximal number of internal vertices.

\section*{Acknowledgments}

We are grateful to Alex Küronya and Carlos Jes\'us Moreno-\'Avila for several discussions on the subject.
The first and second authors wish to thank the UAB (Universitat Aut\`onoma de Barcelona), where the first ideas of this paper came out, for the invitation and hospitality.

{\footnotesize
	\bibliographystyle{plainurl}
\bibliography{NOB}}

\end{document}